\newcommand{\R}{\mathbb{R}}
\renewcommand{\H}{\mathcal{H}}
\newcommand{\ip}[2]{\ensuremath{\left<#1,#2\right>}} 
\newcommand{\norm}[1]{\ensuremath{\left|\left|#1\right|\right|}}
\newcommand{\nt}{\notag}
\renewcommand{\L}{\mathcal{L}}
\renewcommand{\l}{\ell}
\newtheorem{theorem}{Theorem}[section]
\newtheorem{lemma}[theorem]{Lemma}
\DeclareMathOperator*{\argmin}{argmin}
\begin{document}
\title{Wavelets on Graphs via Spectral Graph Theory}



\author[*,a,1]{David K Hammond}
\author[b,2]{Pierre Vandergheynst}
\author[c]{R\'emi Gribonval}
\affil[a]{NeuroInformatics Center, University of Oregon, Eugene, USA}
\affil[b]{Ecole Polytechnique F\'ed\'erale de Lausanne, Lausanne,
  Switzerland}
\affil[c]{INRIA, Rennes, France}
\let\oldthefootnote\thefootnote
\renewcommand{\thefootnote}{\fnsymbol{footnote}}
\footnotetext[1]{Principal Corresponding Author}
\renewcommand{\thefootnote}{}
\footnotetext{ {\em Email Addresses : } 
David Hammond (\url{hammond@uoregon.edu})
Pierre Vandergheynst (\url{pierre.vandergheynst@epfl.ch})
R\'emi Gribonval (\url{remi.gribonval@inria.fr})}

\let\thefootnote\oldthefootnote

\footnotetext[1]{This work performed while DKH was at EPFL}
\footnotetext[2]{This work was supported in part by the EU Framework 7
   FET-Open project FP7-ICT-225913-SMALL : Sparse Models, Algorithms
   and Learning for Large-Scale Data}

\maketitle

\begin{abstract}
 
  We propose a novel method for constructing wavelet transforms of
  functions defined on the vertices of an arbitrary finite weighted
  graph.  Our approach is based on defining scaling using the the
  graph analogue of the Fourier domain, namely the spectral
  decomposition of the discrete graph Laplacian $\L$.  Given a wavelet
  generating kernel $g$ and a scale parameter $t$, we define the
  scaled wavelet operator $T_g^t = g(t\L)$.  The spectral graph
  wavelets are then formed by localizing this operator by applying it
  to an indicator function.  Subject to an admissibility condition on
  $g$, this procedure defines an invertible transform.  We explore the
  localization properties of the wavelets in the limit of fine scales.
  Additionally, we present a fast Chebyshev polynomial approximation
  algorithm for computing the transform that avoids the need for
  diagonalizing $\L$. We highlight potential applications of the
  transform through examples of wavelets on graphs corresponding to a
  variety of different problem domains.

\end{abstract}

\section{Introduction}

Many interesting scientific problems involve analyzing and
manipulating structured data. Such data often consist of sampled
real-valued functions defined on domain sets themselves having some
structure. The simplest such examples can be described by scalar
functions on regular Euclidean spaces, such as time series data,
images or videos. However, many interesting applications involve data
defined on more topologically complicated domains. Examples include
data defined on network-like structures, data defined on manifolds or
irregularly shaped domains, and data consisting of ``point clouds'',
such as collections of feature vectors with associated labels.  As
many traditional methods for signal processing are designed for data
defined on regular Euclidean spaces, the development of methods that
are able to accommodate complicated data domains is an important
problem.

Many signal processing techniques are based on transform methods,
where the input data is represented in a new basis before analysis or
processing. One of the most successful types of transforms in use is
wavelet analysis. Wavelets have proved over the past 25 years to be an
exceptionally useful tool for signal processing. Much of the power of
wavelet methods comes from their ability to simultaneously localize
signal content in both space and frequency. For signals whose primary
information content lies in localized singularities, such as step
discontinuities in time series signals or edges in images, wavelets
can provide a much more compact representation than either the
original domain or a transform with global basis elements such as the
Fourier transform.
An enormous body of literature exists for describing and exploiting
this wavelet sparsity. We include a few representative references for
applications to signal compression
\cite{Shapiro1993,Said1996,Hilton1997,Buccigrossi1999,Taubman2002},
denoising
\cite{Donoho1994,Chang2000,Sendur2002,Portilla2003,Daubechies2005},
and inverse problems including deconvolution 
\cite{Starck1994,Donoho1995,Miller1995,Nowak2000,Bioucas-Dias2006}.
As the individual waveforms comprising the wavelet transform are self
similar, wavelets are also useful for constructing scale invariant
descriptions of signals. This property can be exploited for pattern
recognition problems where the signals to be recognized or classified
may occur at different levels of zoom \cite{Manthalkar2003}. In a
similar vein, wavelets can be used to characterize fractal
self-similar processes \cite{Flandrin1992}.

The demonstrated effectiveness of wavelet transforms for signal
processing problems on regular domains motivates the study of
extensions to irregular, non-euclidean spaces. In this paper, we
describe a flexible construction for defining wavelet transforms for
data defined on the vertices of a weighted graph. Our approach uses
only the connectivity information encoded in the edge weights, and
does not rely on any other attributes of the vertices (such as their
positions as embedded in 3d space, for example). As such, the
transform can be defined and calculated for any domain where the
underlying relations between data locations can be represented by a
weighted graph. This is important as weighted graphs provide an
extremely flexible model for approximating the data domains of a large
class of problems. 

Some data sets can naturally be modeled as scalar functions defined on
the vertices of graphs. For example, computer networks, transportation
(road, rail, airplane) networks or social networks can all be
described by weighted graphs, with the vertices corresponding to
individual computers, cities or people respectively.  The graph
wavelet transform could be useful for analyzing data defined on these
vertices, where the data is expected to be influenced by the
underlying topology of the network. As a mock example problem,
consider rates of infection of a particular disease among different
population centers. As the disease may be expected to spread by people
traveling between different areas, the graph wavelet transform based
on a weighted graph representing the transportation network may be
helpful for this type of data analysis.

Weighted graphs also provide a flexible generalization of regular grid
domains. By identifying the grid points with vertices and connecting
adjacent grid points with edges with weights inversely proportional to
the square of the distance between neighbors, a regular lattice can be
represented with weighted graph. A general weighted graph, however,
has no restriction on the regularity of vertices. For example points
on the original lattice may be removed, yielding a ``damaged grid'',
or placed at arbitrary locations corresponding to irregular
sampling. In both of these cases, a weighted graph can still be
constructed that represents the local connectivity of the underlying
data points. Wavelet transforms that rely upon regular spaced samples
will fail in these cases, however transforms based on weighted graphs
may still be defined.

Similarly, weighted graphs can be inferred from mesh descriptions for
geometrical domains. An enormous literature exists on techniques for
generating and manipulating meshes; such structures are widely used in
applications for computer graphics and numerical solution of partial
differential equations. The transform methods we will describe thus
allow the definition of a wavelet transform for data defined on any
geometrical shape that can be described by meshes.

Weighted graphs can also be used to describe the similarity
relationships between ``point clouds'' of vectors.  Many approaches
for machine learning or pattern recognition problems involve
associating each data instance with a collection of feature vectors
that hopefully encapsulate sufficient information about the data point
to solve the problem at hand. For example, for machine vision problems
dealing with object recognition, a common preprocessing step involves
extracting keypoints and calculating the Scale Invariant Feature
Transform (SIFT) features \cite{Lowe1999}.  In many automated systems
for classifying or retrieving text, word frequencies counts are used
as feature vectors for each document \cite{Apte1994}. After such
feature extraction, each data point may be associated to a feature
vector $v_m \in \R^N$, where $N$ may be very large depending on the
application. For many problems, the local distance relationships
between data points are crucial for successful learning or
classification. These relationships can be encoded in a weighted graph
by considering the data points as vertices and setting the edge
weights equal to a distance metric $A_{m,n}=d(v_m,v_n)$ for some
function $d:\R^N \times \R^N \to \R$. The spectral graph wavelets
applied to such graphs derived from point clouds could find a number
of uses, including regression problems involving learning or
regularizing a scalar function defined on the data points.

Classical wavelets are constructed by translating and scaling a single
``mother'' wavelet. The transform coefficients are then given by the
inner products of the input function with these translated and scaled
waveforms. Directly extending this construction to arbitrary weighted
graphs is problematic, as it is unclear how to define scaling and
translation on an irregular graph.  We approach this problem by
working in the spectral graph domain, i.e. the space of eigenfunctions
of the graph Laplacian $\L$.  This tool from spectral graph theory
\cite{Chung1997}, provides an analogue of the Fourier transform for
functions on weighted graphs.  In our construction, the wavelet
operator at unit scale is given as an operator valued function $T_g =
g(\L)$ for a generating kernel $g$.  Scaling is then defined in the
spectral domain, i.e. the operator $T^t_g$ at scale $t$ is given by
$g(t\L)$. Applying this operator to an input signal $f$ gives the
wavelet coefficients of $f$ at scale $t$. These coefficients are
equivalent to inner products of the signal $f$ with the individual
graph wavelets. These wavelets can be calculated by applying this
operator to a delta impulse at a single vertex, i.e. $\psi_{t,m} =
T^t_g \delta_m$.  We show that this construction is analogous to the
1-d wavelet transform for a symmetric wavelet, where the transform is
viewed as a Fourier multiplier operator at each wavelet scale.

In this paper we introduce this spectral graph wavelet transform and
study several of its properties. We show that in the fine scale limit,
for sufficiently regular $g$, the wavelets exhibit good localization
properties. With continuously defined spatial scales, the transform is
analogous to the continuous wavelet transform, and we show that it is
formally invertible subject to an admissibility condition on the
kernel $g$. Sampling the spatial scales at a finite number of values
yields a redundant, invertible transform with overcompleteness equal
to the number of spatial scales chosen. We show that in this case the
transform defines a frame, and give a condition for computing the
frame bounds depending on the selection of spatial scales.

While we define our transform in the spectral graph domain, directly
computing it via fully diagonalizing the Laplacian operator is
infeasible for problems with size exceeding a few thousand
vertices. We introduce a method for approximately computing the
forward transform through operations performed directly in the vertex
domain that avoids the need to diagonalize the Laplacian. By
approximating the kernel $g$ with a low dimensional Chebyshev
polynomial, we may compute an approximate forward transform in a
manner which accesses the Laplacian only through matrix-vector
multiplication. This approach is computationally efficient if the
Laplacian is sparse, as is the case for many practically relevant
graphs.
 
We show that computation of the pseudoinverse of the overcomplete
spectral graph wavelet transform is compatible with the Chebyshev
polynomial approximation scheme. Specifically, the pseudoinverse may
be calculated by an iterative conjugate gradient method that requires
only application of the forward transform and its adjoint, both of
which may be computed using the Chebyshev polynomial approximation
methods.

Our paper is structured as follows. Related work is discussed in
Section \ref{sec:related}. We review the classical wavelet transform
in Section \ref{sec:cwt}, and highlight the interpretation of the
wavelet acting as a Fourier multiplier operator. We then set our
notation for weighted graphs and introduce spectral graph theory in
Section \ref{sec:sgwt}. The spectral graph wavelet transform is
defined in Section \ref{sec:sgwt}. In Section \ref{sec:prop} we
discuss and prove some properties of the transform. Section
\ref{sec:polyapprox} is dedicated to the polynomial approximation and
fast computation of the transform. The inverse transform is discussed
in section \ref{sec:rec}.  Finally, several examples of the transform
on domains relevant for different problems are shown in Section
\ref{sec:examp}.

\subsection{Related Work}
\label{sec:related}

Since the original introduction of wavelet theory for square
integrable functions defined on the real line, numerous authors have
introduced extensions and related transforms for signals on the plane
and higher dimensional spaces. By taking separable products of one
dimensional wavelets, one can construct orthogonal families of
wavelets in any dimension \cite{Mallat1998}. However, this yields
wavelets with often undesirable bias for coordinate axis directions. A
large family of alternative multiscale transforms has been developed
and used extensively for image processing, including Laplacian
pyramids \cite{Burt1983}, steerable wavelets \cite{Simoncelli1992},
complex dual-tree wavelets \cite{Kingsbury2001}, curvelets
\cite{Candes2003}, and bandlets \cite{Peyre2008obb}.  Wavelet
transforms have also been defined for certain non-Euclidean manifolds,
most notably the sphere \cite{Antoine1999,Wiaux2008} and other conic
sections \cite{Antoine2008}.

Previous authors have explored wavelet transforms on graphs, albeit
via different approaches to those employed in this paper.  Crovella
and Kolaczyk \cite{Crovella2003} defined wavelets on unweighted graphs
for analyzing computer network traffic.  Their construction was based
on the n-hop distance, such that the value of a wavelet centered at a
vertex $n$ on vertex $m$ depended only on the shortest-path distance
between $m$ and $n$. The wavelet values were such that the sum over
each n-hop annulus equaled the integral over an interval of a given
zero-mean function, thus ensuring that the graph wavelets had zero
mean. Their results differ from ours in that their construction made
no use of graph weights and no study of the invertibility or frame
properties of the transform was done.  Smalter
et. al. \cite{Smalter2009} used the graph wavelets of Crovella and
Kolaczyk as part of a larger method for measuring structural
differences between graphs representing chemical structures, for
machine learning of chemical activities for virtual drug screening.

Maggioni and Coiffmann \cite{Coifman2006} introduced ``diffusion
wavelets'', a general theory for wavelet decompositions based on
compressed representations of powers of a diffusion operator. The
diffusion wavelets were described with a framework that can apply on
smooth manifolds as well as graphs.  Their construction interacts with
the underlying graph or manifold space through repeated applications
of a diffusion operator $T$, analogously to how our construction is
parametrized by the choice of the graph Laplacian $\L$.  The largest
difference between their work and ours is that the diffusion wavelets
are designed to be orthonormal. This is achieved by running a
localized orthogonalization procedure after applying dyadic powers of
$T$ at each scale to yield nested approximation spaces, wavelets are
then produced by locally orthogonalizing vectors spanning the
difference of these approximation spaces.  While an orthogonal
transform is desirable for many applications, notably operator and
signal compression, the use of the orthogonalization procedure
complicates the construction of the transform, and somewhat obscures
the relation between the diffusion operator $T$ and the resulting
wavelets. In contrast our approach is conceptually simpler, gives a
highly redundant transform, and affords finer control over the
selection of wavelet scales.

Geller and Mayeli \cite{Geller2009} studied a construction for
wavelets on compact differentiable manifolds that is formally similar
to our approach on weighted graphs. In particular, they define scaling
using a pseudodifferential operator $tL e^{-tL}$, where $L$ is the
manifold Laplace-Beltrami operator and $t$ is a scale parameter, and
obtain wavelets by applying this to a delta impulse. They also study
the localization of the resulting wavelets, however the methods and
theoretical results in their paper are different as they are in the
setting of smooth manifolds.

\section{Classical Wavelet Transform}
\label{sec:cwt}

We first give an overview of the classical Continuous Wavelet
Transform (CWT) for $L^2(\R)$, the set of square integrable real valued
functions. We will describe the forward transform and its formal
inverse, and then show how scaling may be expressed in the Fourier
domain. These expressions will provide an analogue that we will later
use to define the Spectral Graph Wavelet Transform.

In general, the CWT will be generated by the choice of a single
``mother'' wavelet $\psi$. Wavelets at different locations and spatial
scales are formed by translating and scaling the mother wavelet. We
write this by
\begin{equation}
\psi_{s,a}(x) = \frac{1}{s} \psi\left(\frac{x-a}{s}\right)
\end{equation}
This scaling convention preserves the $L^1$ norm of the
wavelets. Other scaling conventions are common, especially those
preserving the $L^2$ norm, however in our case the $L^1$ convention
will be more convenient. We restrict ourselves to positive scales
$s>0$.

For a given signal $f$, the wavelet coefficient at scale $s$ and
location $a$ is given by the inner product of $f$ with the wavelet
$\psi_{s,a}$, i.e.
\begin{equation}
W_f(s,a)= \int_{-\infty}^{\infty} \frac{1}{s}\psi^*\left(\frac{x-a}{s}\right) f(x) dx
\end{equation}

The CWT may be inverted provided that the wavelet $\psi$
satisfies the  admissibility condition 
\begin{equation}\label{eq:admissibility}
\int_0^{\infty} \frac{|\hat{\psi}(\omega)|^2}{\omega}d\omega = C_{\psi} < \infty
\end{equation}
This condition implies, for continuously differentiable $\psi$, that
$\hat{\psi}(0) = \int \psi(x)dx =0$, so $\psi$ must be zero mean.

Inversion of the CWT is given by the following relation
\cite{Grossmann1984}
\begin{equation}\label{eq:cwt_inverse}
f(x)=\frac{1}{C_\psi} \int_0^{\infty} \int_{-\infty}^{\infty}
W_f(s,a)\psi_{s,a}(x) \frac{da ds}{s}
\end{equation}





This method of constructing the wavelet transform proceeds by
producing the wavelets directly in the signal domain, through scaling
and translation. However, applying this construction directly to
graphs is problematic. For a given function $\psi(x)$ defined on the
vertices of a weighted graph, it is not obvious how to define
$\psi(sx)$, as if $x$ is a vertex of the graph there is no
interpretation of $sx$ for a real scalar $s$. Our approach to this
obstacle is to appeal to the Fourier domain. We will first show that
for the classical wavelet transform, scaling can be defined in the
Fourier domain. The resulting expression will give us a basis to
define an analogous transform on graphs.

For the moment, we consider the case where the scale parameter is
discretized while the translation parameter is left continuous. While
this type of transform is not widely used, it will provide us with the
closest analogy to the spectral graph wavelet transform.  For a fixed
scale $s$, the wavelet transform may be interpreted as an operator
taking the function $f$ and returning the function $T^{s} f (a) =
W_f(s,a)$. In other words, we consider the translation parameter as
the independent variable of the function returned by the operator
$T^{s}$.
Setting 
\begin{equation}\label{def:psibar}
\bar{\psi}_s(x) = 
\frac{1}{s}\psi^*\left(\frac{-x}{s}\right),
\end{equation}
we see that this operator is given by convolution, i.e.
\begin{align}
(T^{s} f)(a) 
&=\int_{-\infty}^{\infty} \frac{1}{s}\psi^*\left(\frac{x-a}{s}\right) f(x) dx \\ \notag
&=\int_{-\infty}^{\infty} \bar{\psi}_{s}(a-x)f(x)dx \\ \notag
&= (\bar{\psi}_{s} \star f)(a)
\end{align}
Taking the Fourier transform and applying the convolution theorem yields
\begin{equation}
\widehat{T^{s}f}(\omega) = \hat{\bar{\psi}}_{s}(\omega) \hat{f}(\omega)
\end{equation}
Using the scaling properties of the Fourier transform and the
definition (\ref{def:psibar}) gives
\begin{equation}
\hat{\bar{\psi}}_{s}(\omega) = \hat{\psi^*}(s \omega)
\end{equation}
Combining these and inverting the transform we may write
\begin{equation}\label{eq:tsf}
(T^s f)(x) = \frac{1}{2\pi}\int_{-\infty}^{\infty} e^{i\omega x} \hat{\psi}^*(s\omega) \hat{f}(\omega) d\omega
\end{equation}

In the above expression, the scaling $s$ appears only in the argument
of $\hat{\psi}^*(s\omega)$, showing that the scaling operation can be
completely transferred to the Fourier domain. The above expression
makes it clear that the wavelet transform at each scale $s$ can be
viewed as a Fourier multiplier operator, determined by filters that
are derived from scaling a single filter $\hat{\psi}^*(\omega)$. This
can be understood as a bandpass filter, as $\hat{\psi}(0)=0$ for
admissible wavelets.  Expression (\ref{eq:tsf}) is the analogue that
we will use to later define the Spectral Graph Wavelet Transform.

Translation of the wavelets may be defined through ``localizing'' the
wavelet operator by applying it to an impulse. 
Writing $\delta_a(x) = \delta(x-a)$, one has
\begin{equation}
(T^s \delta_a)(x) = \frac{1}{s}\psi^*\left(\frac{a-x}{s}\right)
\end{equation}
For real valued and even wavelets this reduces to $(T^s\delta_a)(x) =
\psi_{a,s}(x)$.



\section{Weighted Graphs and Spectral Graph Theory}
\label{sec:sgt}

The previous section showed that the classical wavelet transform could
be defined without the need to express scaling in the original signal
domain. This relied on expressing the wavelet operator in the Fourier
domain. Our approach to defining wavelets on graphs relies on
generalizing this to graphs; doing so requires the analogue of the
Fourier transform for signals defined on the vertices of a weighted
graph. This tool is provided by Spectral Graph Theory. In this section
we fix our notation for weighted graphs, and motivate and define the
Graph Fourier transform.

\subsection{Notation for Weighted Graphs}

A weighted graph $G = \{E,V,w\}$ consists of a set of vertices $V$, a
set of edges $E$, and a weight function $w:E\to\R^+$ which assigns a
positive weight to each edge. We consider here only finite graphs
where $|V| = N <\infty$.  The adjacency matrix $A$ for a weighted
graph $G$ is the $N\times N$ matrix with entries $a_{m,n}$ where
\begin{equation}
a_{m,n} = 
\begin{cases}
w(e) \mbox{ if $e\in E$ connects vertices $m$ and $n$} \\
0 \mbox{ otherwise}
\end{cases}
\end{equation}
In the present work we consider only undirected graphs, which
correspond to symmetric adjacency matrices. We do not consider the
possibility of negative weights.

A graph is said to have loops if it contain edges that connect a
single vertex to itself. Loops imply the presence of nonzero diagonal
entries in the adjacency matrix. As the existence of loops presents no
significant problems for the theory we describe in this paper, we do
not specifically disallow them.

For a weighted graph, the degree of each vertex $m$, written as
$d(m)$, is defined as the sum of the weights of all the edges incident
to it. This implies $d(m) = \sum_n a_{m,n}$. We define the matrix $D$
to have diagonal elements equal to the degrees, and zeros elsewhere.

Every real valued function $f:V\to\R$ on the vertices of the graph $G$
can be viewed as a vector in $\R^N$, where the value of $f$ on each
vertex defines each coordinate. This implies an implicit numbering of
the vertices. We adopt this identification, and will write $f\in R^N$
for functions on the vertices of the graph, and $f(m)$ for the value
on the $m^{th}$ vertex.

Of key importance for our theory is the graph Laplacian operator
$\L$. The non-normalized Laplacian is defined as $\L=D-A$. It can be
verified that for any $f\in \R^N$, $\L$ satisfies
\begin{equation}
(\L f)(m) = \sum_{m\sim n} w_{m,n} \cdot ( f(m) - f(n))
\end{equation}
where the sum over $m\sim n$ indicates summation over all vertices $n$
that are connected to the vertex $m$, and $w_{m,n}$ denotes the weight
of the edge connecting $m$ and $n$.

For a graph arising from a regular mesh, the graph Laplacian
corresponds to the standard stencil approximation of the continuous
Laplacian (with a difference in sign). Consider the graph defined by
taking vertices $v_{m,n}$ as points on a regular two dimensional grid,
with each point connected to its four neighbors with weight
$1/{(\delta x)^2}$, where $\delta x$ is the distance between adjacent
grid points. Abusing the index notation, for a function $f=f_{m,n}$
defined on the vertices, applying the graph Laplacian to $f$ yields
\begin{equation} \label{eq:laplacian_stencil}
(\L f)_{m,n}=(4 f_{m,n}-f_{m+1,n}-f_{m-1,n} - f_{m,n+1} - f_{m,n-1})/(\delta x)^2
\end{equation}
which is the standard 5-point stencil for approximating $-\nabla^2 f$.

Some authors define and use an alternative, normalized form of the
Laplacian, defined as 
\begin{equation}
\L^{norm} = D^{-1/2} \L D^{-1/2} = I-D^{-1/2} A D^{-1/2}
\end{equation}
It should be noted that $\L$ and $\L^{norm}$ are not similar matrices,
in particular their eigenvectors are different.  As we shall see in
detail later, both operators may be used to define spectral graph
wavelet transforms, however the resulting transforms will not be
equivalent. Unless noted otherwise we will use the non-normalized form
of the Laplacian, however much of the theory presented in this paper
is identical for either choice. We consider that the selection of the
appropriate Laplacian for a particular problem should depend on the
application at hand.

For completeness, we note the following. The graph Laplacian can be
defined for graphs arising from sampling points on a differentiable
manifold. The regular mesh example described previously is a simple
example of such a sampling process.  With increasing sampling density,
by choosing the weights appropriately the normalized graph Laplacian
operator will converge to the intrinsic Laplace-Beltrami operator
defined for differentiable real valued functions on the
manifold. Several authors have studied this limiting process in
detail, notably \cite{Hein2005,Singer2006,Belkin2008}.

\subsection{Graph Fourier Transform}

\newcommand{\evec}{\chi}

On the real line, the complex exponentials $e^{i\omega x}$ defining
the Fourier transform are eigenfunctions of the one-dimensional
Laplacian operator $\frac{d}{dx^2}$. The inverse Fourier transform
\begin{equation}
f(x) = \frac{1}{2\pi} \int \hat{f}(\omega) e^{i\omega x}d\omega
\end{equation}
can thus be seen as the expansion of $f$ in terms of the
eigenfunctions of the Laplacian operator. 

The graph Fourier transform is defined in precise analogy to the
previous statement. As the graph Laplacian $\L$ is a real symmetric
matrix, it has a complete set of orthonormal eigenvectors. We denote
these by $\evec_{\l}$ for $\l=0, \hdots ,N-1$, with associated eigenvalues
$\lambda_{\l}$
\begin{equation}
\L \evec_{\l} = \lambda_{\l} \evec_{\l}
\end{equation}
As $\L$ is symmetric, each of the $\lambda_{\l}$ are real. For the graph
Laplacian, it can be shown that the eigenvalues are all non-negative,
and that $0$ appears as an eigenvalue with multiplicity equal to the
number of connected components of the graph
\cite{Chung1997}. 
%
%
Henceforth, we assume the graph $G$ to be
connected, we may thus order the eigenvalues such that
\begin{equation}
0=\lambda_0 < \lambda_1 \leq \lambda_2 ... \leq \lambda_{N-1}
\end{equation}

For any function $f\in\R^{N}$ defined on the vertices of $G$, its
graph Fourier transform $\hat{f}$ is defined by
\begin{equation}
\hat{f}(\l) = \ip{\evec_{\l}}{f} = \sum_{n=1}^N \evec^*_{\l}(n) f(n)
\end{equation}
The inverse transform reads as
\begin{equation}
f(n) = \sum_{\l=0}^{N-1} \hat{f}(\l) \evec_{\l}(n)
\end{equation}

The Parseval relation holds for the graph Fourier transform, in
particular for any $f,h \in \R^N$
\begin{equation} \label{eq:parseval}
\ip{f}{h} = \ip{\hat{f}}{\hat{g}}.
\end{equation}

\section{Spectral Graph Wavelet Transform}
\label{sec:sgwt}

Having defined the analogue of the Fourier transform for functions
defined on the vertices of weighted graphs, we are now ready to define
the spectral graph wavelet transform (SGWT). The transform will be determined
by the choice of a kernel function $g : \R^+ \to \R^+$, which is
analogous to Fourier domain wavelet $\hat{\psi}^*$ in equation
\ref{eq:tsf}. This kernel $g$ should behave as a band-pass filter,
i.e. it satisfies $g(0)=0$ and $\lim_{x\to\infty} g(x)=0$.  We will
defer the exact specification of the kernel $g$ that we use until
later.

\subsection{Wavelets}

The spectral graph wavelet transform is generated by wavelet operators
that are operator-valued functions of the Laplacian. One may define a
measurable function of a bounded self-adjoint linear operator on a
Hilbert space using the continuous functional calculus
\cite{Reed1980}. This is achieved using the Spectral representation of
the operator, which in our setting is equivalent to the graph Fourier
transform defined in the previous section. In particular, for our
spectral graph wavelet kernel $g$, the wavelet operator $T_g = g(\L)$
acts on a given function $f$ by modulating each Fourier mode as
\begin{equation}
\widehat{T_g f}(\l) = g(\lambda_\l) \hat{f}(\l)
\end{equation}
Employing the inverse Fourier transform yields
\begin{equation}\label{eq:t_g_operator_fourier_exp}
(T_g f)(m) = \sum_{\l=0}^{N-1} g(\lambda_\l) \hat{f}(\l) \evec_\l(m)
\end{equation}

The wavelet operators at scale $t$ is then defined by $T_g^t =
g(t\L)$. It should be emphasized that even though the ``spatial
domain'' for the graph is discrete, the domain of the kernel $g$ is
continuous and thus the scaling may be defined for any positive real number
$t$. 

The spectral graph wavelets are then realized through localizing these
operators by applying them to the impulse on a single vertex, i.e.
\begin{equation}
\psi_{t,n} = T^t_g \delta_n
\end{equation}
Expanding this explicitly in the graph domain shows
\begin{equation} \label{eq:psi_fourier_exp}
\psi_{t,n}(m) = \sum_{\l=0}^{N-1} g(t\lambda_{\l}) \evec_\l^*(n)\evec_\l(m)
\end{equation}

Formally, the wavelet coefficients of a given function $f$ are
produced by taking the inner product with these wavelets, as
\begin{equation}
W_f(t,n) = \ip{\psi_{t,n}}{f}
\end{equation}
Using the orthonormality of the $\{\evec_\l\}$, it can be seen that the
wavelet coefficients can also be achieved directly from the wavelet
operators, as
\begin{equation} \label{eq:w_fourier_exp}
W_f(t,n) = \left(T_g^tf \right) (n) = \sum_{\l=0}^{N-1}
g(t\lambda_\l)\hat{f}(\l)\evec_\l(n)
\end{equation}

\subsection{Scaling functions}
\newcommand{\scalf}{\phi} 

By construction, the spectral graph wavelets $\psi_{t,n}$ are all
orthogonal to the null eigenvector $\evec_0$, and nearly orthogonal to
$\evec_\l$ for $\lambda_\l$ near zero. In order to stably represent the
low frequency content of $f$ defined on the vertices of the graph, it
is convenient to introduce a second class of waveforms, analogous to
the lowpass residual scaling functions from classical wavelet
analysis. These spectral graph scaling functions have an analogous
construction to the spectral graph wavelets. They will be determined
by a single real valued function $h:\R^+\to\R$, which acts as a
lowpass filter, and satisfies $h(0)>0$ and $h(x)\to 0$ as $x\to
\infty$. The scaling functions are then given by $\scalf_n = T_h
\delta_n = h(\L) \delta_n$, and the coefficients by $S_f(n) =
\ip{\scalf_n}{f}$.

\begin{figure} 
  \centering{
    \includegraphics[width=.5\columnwidth]{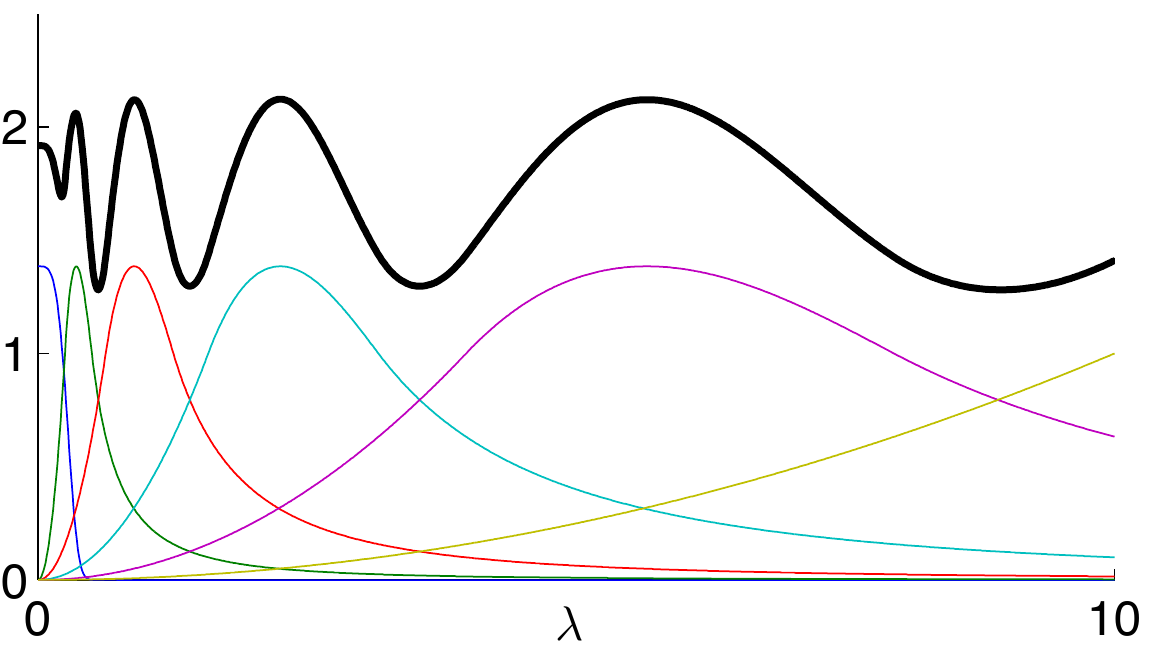}
  }
  \caption{ Scaling function $h(\lambda)$ (blue curve),  wavelet generating
    kernels $g(t_j\lambda)$, and sum of squares $G$ (black curve), for $J=5$ scales,
    $\lambda_{max}=10$, $K=20$.  Details in Section
    \ref{sec:sgwt_design}.}\label{fig:spectral_design}
\end{figure}

Introducing the scaling functions helps ensure stable recovery of the
original signal $f$ from the wavelet coefficients when the scale
parameter $t$ is sampled at a discrete number of values $t_j$.  As we
shall see in detail in Section \ref{sec:wavelet_frames}, stable
recovery will be assured if the quantity $G(\lambda)=h(\lambda)^2 +
\sum_{j=1}^J g(t_j \lambda)^2$ is bounded away from zero on the
spectrum of $\L$.  Representative choices for $h$ and $g$ are shown in
figure \ref{fig:spectral_design}; the exact specification of $h$ and
$g$ is deferred to Section \ref{sec:sgwt_design}.


Note that the scaling functions defined in this way are present
merely to smoothly represent the low frequency content on the
graph. They do not generate the wavelets $\psi$ through the two-scale
relation as for traditional orthogonal wavelets. The design of the
scaling function generator $h$ is thus uncoupled from the choice of
wavelet kernel $g$, provided reasonable tiling for $G$ is achieved.

\section{Transform properties}
\label{sec:prop}
In this section we detail several properties of the spectral graph
wavelet transform. We first show an inverse formula for the transform
analogous to that for the continuous wavelet transform. We examine the
small-scale and large-scale limits, and show that the wavelets are
localized in the limit of small scales. Finally we discuss
discretization of the scale parameter and the resulting wavelet
frames.

\subsection{Continuous SGWT  Inverse}

In order for a particular transform to be useful for signal
processing, and not simply signal analysis, it must be possible to
reconstruct from a given set of transform coefficients. We will show
that the spectral graph wavelet transform admits an inverse formula
analogous to (\ref{eq:cwt_inverse}) for the continuous wavelet
transform. 

Intuitively, the wavelet coefficient $W_f(t,n)$ provides a measure of
``how much of'' the wavelet $\psi_{t,n}$ is present in the signal
$f$. This suggests that the original signal may be recovered by
summing the wavelets $\psi_{t,n}$ multiplied by each wavelet
coefficient $W_f(t,n)$. The reconstruction formula below shows that
this is indeed the case, subject to a non-constant weight $dt/t$.

\begin{lemma} \label{lemma:continuous_inverse}
If the SGWT kernel $g$ satisfies the admissibility condition
\begin{equation}
\int_0^{\infty} \frac{g^2(x)} {x} dx = C_g < \infty ,
\end{equation}
and $g(0)=0$, then 
\begin{equation} \label{eq:lemma1conclusion}
\frac{1}{C_g}\sum_{n=1}^N \int_0^{\infty} W_f(t,n) \psi_{t,n}(m) \frac{dt}{t}
=f^\#(m)
\end{equation}
where $f^\# = f-\ip{\evec_0}{f}\evec_0$. In particular, the
complete reconstruction is then given by $f = f^\# +
\hat{f}(0)\evec_0$.

\end{lemma}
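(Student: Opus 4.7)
The plan is to expand both $W_f(t,n)$ and $\psi_{t,n}(m)$ in the graph Fourier basis using (\ref{eq:w_fourier_exp}) and (\ref{eq:psi_fourier_exp}), perform the vertex summation over $n$ first, and only then address the scale integral. Substituting these expansions into the left-hand side of (\ref{eq:lemma1conclusion}) produces a double sum over eigenindices $\l,\l'$. Exchanging the finite $n$-sum with the other sums, the inner vertex factor $\sum_n \evec_\l(n)\evec_{\l'}^*(n) = \delta_{\l,\l'}$ collapses the double sum to a single one, and the $t$-integrand reduces to $\sum_{\l=0}^{N-1} g(t\lambda_\l)^2\,\hat{f}(\l)\,\evec_\l(m)$.

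Next I would peel off the $\l=0$ term: since $\lambda_0 = 0$ and $g(0)=0$ by hypothesis, the factor $g(t\lambda_0)^2$ is identically zero in $t$, so this mode contributes nothing. For each $\l \geq 1$, connectedness of $G$ gives $\lambda_\l > 0$, and the substitution $u = t\lambda_\l$ yields $\int_0^\infty g(t\lambda_\l)^2\,dt/t = \int_0^\infty g(u)^2\,du/u = C_g$ by admissibility. Assembling the pieces and dividing by $C_g$ leaves $\sum_{\l=1}^{N-1} \hat{f}(\l)\,\evec_\l(m)$, which is precisely the inverse graph Fourier expansion of $f$ with its $\evec_0$-component removed, i.e. $f^\#(m)$. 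Adding back $\hat{f}(0)\evec_0(m)$ recovers $f$ and gives the stated complete reconstruction formula.

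The only technical point to check is the interchange of the scale integral with the vertex and eigenindex sums. Because $G$ is finite, those sums have only $N$ terms apiece, so Fubini reduces to verifying absolute integrability of $g(t\lambda_\l)^2/t$ on $(0,\infty)$ for each fixed $\l \geq 1$; this is exactly the admissibility hypothesis after the same change of variable used above. I do not expect any substantive analytic obstacle: once the spectral expansions are written out, the argument is essentially a direct computation that mirrors the classical CWT inversion, with orthonormality of the Laplacian eigenvectors playing the role played by the Fourier convolution/Plancherel identity in the continuous case.
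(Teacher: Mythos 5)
Your proposal is correct and follows essentially the same route as the paper's proof: expand $W_f(t,n)$ and $\psi_{t,n}(m)$ in the graph Fourier basis, collapse the double eigenindex sum via orthonormality of the $\evec_\l$, evaluate the scale integral by the substitution $u=t\lambda_\l$ to get $C_g$ for $\l\geq 1$ and zero for $\l=0$, and identify the remainder as $f^\#(m)$. Your explicit remark on justifying the interchange of the $t$-integral with the finite sums is a small tidiness the paper leaves implicit, but it does not change the argument.
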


\begin{proof}
  Using (\ref{eq:psi_fourier_exp}) and (\ref{eq:w_fourier_exp}) to
  express $\psi_{t,n}$ and $W_f(t,n)$ in the graph Fourier basis, the
  l.h.s. of the above becomes
\begin{align}
\frac{1}{C_g} & \int_0^{\infty} \frac{1}{t} \sum_n \left(
\sum_{\l} g(t\lambda_{\l}) \evec_{\l}(n) \hat{f}(\l)
\sum_{\l'} g(t\lambda_{\l'}) \evec^*_{\l'}(n)\evec_{\l'}(m) \right) dt \\
= \frac{1}{C_g} & \int_0^{\infty} \frac{1}{t} \left(
\sum_{\l,\l'} g(t\lambda_{\l'}) g(t\lambda_{\l}) \hat{f}({\l}) \evec_{\l'}(m) 
\sum_n \evec^*_{\l'}(n) \evec_{\l}(n) \right) dt
\end{align}
The orthonormality of the $\evec_\l$ implies $\sum_n \evec_{\l'}^*(n)\evec_{\l}(n) =
\delta_{\l,\l'}$, inserting this above and summing over ${\l'}$ gives
\begin{equation} \label{eq:lemma1step2}
=\frac{1}{C_g} \sum_{\l} \left( 
\int_0^{\infty} \frac{g^2(t\lambda_{\l})}{t} dt \right) 
\hat{f}(\l) \evec_{\l}(m)\\
\end{equation}
If $g$ satisfies the admissibility condition, then the substitution
$u=t\lambda_{\l}$ shows that $\int \frac{g^2(t\lambda_{\l})}{t}dt = C_g$
independent of $\l$, except for when $\lambda_\l=0$ at $\l=0$ when the
integral is zero. The expression (\ref{eq:lemma1step2}) can be seen as
the inverse Fourier transform evaluated at vertex $m$, where the $\l=0$
term is omitted. This omitted term is exactly equal to
$\ip{\evec_0}{f}\evec_0 = \hat{f}(0) \evec_0$, which proves the desired
result.
\end{proof}

Note that for the non-normalized Laplacian, $\evec_0$ is constant on
every vertex and $f^\#$ above corresponds to removing the mean of
$f$.  Formula (\ref{eq:lemma1conclusion}) shows that the mean of $f$
may not be recovered from the zero-mean wavelets.  The situation is
different from the analogous reconstruction formula
(\ref{eq:cwt_inverse}) for the CWT, which shows the somewhat
counterintuitive result that it is possible to recover a non zero-mean
function by summing zero-mean wavelets. This is possible on the real
line as the Fourier frequencies are continuous; the fact that it is not
possible for the SGWT should be considered a consequence of the
discrete nature of the graph domain.

While it is of theoretical interest, we note that this continuous
scale reconstruction formula may not provide a practical
reconstruction in the case when the wavelet coefficients may only be
computed at a discrete number of scales, as is the case for finite
computation on a digital computer. We shall revisit this and discuss
other reconstruction methods in sections \ref{sec:wavelet_frames} and
\ref{sec:rec}. 

\subsection{Localization in small scale limit}

One of the primary motivations for the use of wavelets is that they
provide simultaneous localization in both frequency and time (or
space).  It is clear by construction that if the kernel $g$ is
localized in the spectral domain, as is loosely implied by our use of
the term bandpass filter to describe it, then the associated spectral
graph wavelets will all be localized in frequency. In order to be able
to claim that the spectral graph wavelets can yield localization in
both frequency and space, however, we must analyze their behaviour in
the space domain more carefully.

For the classical wavelets on the real line, the space localization is
readily apparent : if the mother wavelet $\psi(x)$ is well localized
in the interval $[-\epsilon,\epsilon]$, then the wavelet
$\psi_{t,a}(x)$ will be well localized within $[a-\epsilon
t,a+\epsilon t]$.  In particular, in the limit as $t\to 0$,
$\psi_{t,a}(x) \to 0$ for $x\neq a$.  The situation for the spectral
graph wavelets is less straightforward to analyze because the scaling
is defined implicitly in the Fourier domain. We will nonetheless show
that, for $g$ sufficiently regular near 0, the normalized spectral
graph wavelet $\psi_{t,j}/\norm{\psi_{t,j}}$ will vanish on vertices
sufficiently far from $j$ in the limit of fine scales, i.e. as $t \to
0$. This result will provide a quantitative statement of the
localization properties of the spectral graph wavelets.

One simple notion of localization for $\psi_{t,n}$ is given by its
value on a distant vertex $m$, e.g. we should expect $\psi_{t,n}(m)$
to be small if $n$ and $m$ are separated, and $t$ is small. Note that
$\psi_{t,n}(m) = \ip{\psi_{t,n}}{\delta_m} =
\ip{T^t_g\delta_n}{\delta_m}$. The operator $T^t_g =g(t\L)$ is
self-adjoint as $\L$ is self adjoint.  This shows that $\psi_{t,n}(m)
= \ip{\delta_n}{T^t_g\delta_m}$, i.e. a matrix element of the operator
$T^t_g$.

Our approach is based on approximating $g(t\L)$ by a low order
polynomial in $\L$ as $t\to 0$. As is readily apparent by inspecting
equation (\ref{eq:t_g_operator_fourier_exp}), the operator $T^t_g$
depends only on the values of $g_t(\lambda)$ restricted to the
spectrum $\{\lambda_\l\}_{\l=0}^{N-1}$ of $\L$. In particular, it is
insensitive to the values of $g_t(\lambda)$ for
$\lambda>\lambda_{N-1}$. If $g(\lambda)$ is smooth in a neighborhood
of the origin, then as $t$ approaches $0$ the zoomed in $g_t(\lambda)$
can be approximated over the entire interval $[0,\lambda_{N-1}]$ by
the Taylor polynomial of $g$ at the origin. In order to transfer the
study of the localization property from $g$ to an approximating
polynomial, we will need to examine the stability of the wavelets
under perturbations of the generating kernel. This, together with the
Taylor approximation will allow us to examine the localization
properties for integer powers of the Laplacian $\L$.

In order to formulate the desired localization result, we must specify
a notion of distance between points $m$ and $n$ on a weighted
graph. We will use the shortest-path distance, i.e. the minimum number
of edges for any paths connecting $m$ and $n$ :
\begin{align} 
d_G(m,n) &= \argmin_s \{k_1, k_2, ... , k_s\} \\
& \mbox{ s.t. $m=k_1$, $n=k_s$, and $w_{k_r,k_{r+1}}>0$ for $1\leq r<s$. }
\end{align}
Note that as we have defined it, $d_G$ disregards the values of the
edge weights. In particular it defines the same distance function on
$G$ as on the binarized graph where all of the nonzero edge weights
are set to unit weight. 

We need the following elegant elementary result from graph theory \cite{Bondy2008}.
\begin{lemma}\label{lemma:pathcount}
  Let $G$ be a weighted graph, with adjacency matrix $A$. Let
  $B$ equal the adjacency matrix of the binarized graph, i.e.
  $B_{m,n}=0$ if $A_{m,n}=0$, and $B_{m,n}=1$  if  $A_{m,n}>0$. Let
  $\tilde{B}$ be the adjacency matrix with unit loops added on every
  vertex, e.g. $\tilde{B}_{m,n}=B_{m,n}$ for $m\neq n$ and
  $\tilde{B}_{m,n}=1$ for $m=n$.

  Then for each $s>0$, $(B^s)_{m,n}$ equals the number of paths of
  length $s$ connecting $m$ and $n$, and $(\tilde{B}^s)_{m,n}$ equals
  the number of all paths of length $r\leq s$ connecting $m$ and $n$.
\end{lemma}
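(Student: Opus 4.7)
My plan is to prove the two claims separately, the first by induction on the exponent $s$ and the second by reducing to the first via the binomial theorem.

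For the first claim, I would induct on $s$. The base case $s=1$ is immediate from the definition of $B$: $B_{m,n}=1$ exactly when there is a single edge (a length-$1$ path) joining $m$ and $n$. For the inductive step, expand the matrix product
\begin{equation}
(B^{s+1})_{m,n} \;=\; \sum_{k} (B^s)_{m,k}\, B_{k,n}.
\end{equation}
By the inductive hypothesis, $(B^s)_{m,k}$ counts length-$s$ paths from $m$ to $k$, while $B_{k,n}$ equals $1$ iff $k$ is adjacent to $n$ in the binarized graph. Every length-$(s+1)$ path from $m$ to $n$ decomposes uniquely as a length-$s$ path from $m$ to a neighbor $k$ of $n$, followed by the edge $k \to n$, so the sum counts precisely these.

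For the second claim, I would exploit that $B$ and $I$ commute, so the binomial theorem yields $\tilde{B}^s = (B+I)^s = \sum_{r=0}^s \binom{s}{r} B^r$, and therefore
\begin{equation}
(\tilde{B}^s)_{m,n} \;=\; \sum_{r=0}^{s} \binom{s}{r} (B^r)_{m,n}.
\end{equation}
The combinatorial interpretation then goes as follows: a length-$s$ walk in $\tilde{B}$ from $m$ to $n$ consists of a sequence of $s$ steps, each either traversing a true edge of $G$ or using a self-loop. If exactly $r$ of those steps are true edges, they form a length-$r$ walk in $G$ from $m$ to $n$; conversely, given such a length-$r$ walk, inserting $s-r$ loop-steps at positions chosen from the $s$ slots yields $\binom{s}{r}$ distinct walks in $\tilde{B}$. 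This bijection matches the algebraic expansion above term by term and, in particular, shows $(\tilde{B}^s)_{m,n} > 0$ if and only if some walk of length $r \leq s$ in $G$ connects $m$ to $n$, which is the form in which the lemma is applied to conclude that $(\L^s)_{m,n}$ vanishes when $d_G(m,n) > s$.

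There is no real obstacle here beyond careful bookkeeping; the only delicate point is being explicit about the walk-versus-path terminology and making sure the loop insertions account for every length-$s$ walk in $\tilde{B}$ exactly once. Both parts are entirely elementary, and the proof is essentially a transcription of the classical identification of matrix powers with walk counts in a graph.
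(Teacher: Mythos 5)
The paper does not actually prove this lemma --- it is quoted as a standard elementary fact and attributed to the reference \cite{Bondy2008} --- so there is no internal proof to compare against; your job here is to supply the missing argument, and you do so correctly. The induction for the first claim is the classical walk-counting argument and is fine (your remark about walk-versus-path terminology is apt, since the paper's ``paths'' must be read as walks for the count to be right). For the second claim, your binomial-theorem computation $(\tilde{B}^s)_{m,n}=\sum_{r=0}^{s}\binom{s}{r}(B^r)_{m,n}$ is correct, and it in fact exposes a small imprecision in the lemma as literally stated: for $s\geq 2$ the entry $(\tilde{B}^s)_{m,n}$ is \emph{not} equal to the unweighted count $\sum_{r\leq s}(B^r)_{m,n}$ of walks of length at most $s$, but to the weighted count with multiplicities $\binom{s}{r}$ coming from the placement of the loop steps. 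You note, correctly, that the only property consumed downstream (in Lemma \ref{lemma:laplacian_power}) is the support statement --- $(\tilde{B}^s)_{m,n}>0$ if and only if some walk of length $r\leq s$ joins $m$ to $n$ --- and your argument establishes exactly that, since all terms are non-negative and $\binom{s}{r}>0$ for $0\leq r\leq s$. It would be worth saying explicitly that you are proving this corrected (support) form of the second claim rather than the verbatim equality; with that caveat made, the proof is complete and there is no gap.
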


We wish to use this to show that matrix elements of low powers of the
graph Laplacian corresponding to sufficiently separated vertices must
be zero. We first need the following

\begin{lemma}\label{lemma:posmat_power}
  Let $A=(a_{m,n})$ be an $M\times M$ matrix, and $B=(b_{m,n})$ an
  $M\times M$ matrix with non-negative entries, such that $B_{m,n}=0$
  implies $A_{m,n}=0$. Then, for all $s>0$, and all $m,n$,
  $(B^s)_{m,n} = 0$ implies that $(A^s)_{m,n} = 0$
  \begin{proof}
    By repeatedly writing matrix multiplication as explicit sums, one
    has
    \begin{align}
      (A^s)_{m,n} &= \sum a_{m,k_1} a_{k_1,k_2} ... a_{k_{s-1},n} \\
      (B^s)_{m,n} &= \sum b_{m,k_1} b_{k_1,k_2} ... b_{k_{s-1},n} 
    \end{align}
    where the sum is taken over all $s-1$ length sequences $k_1, k_2
    ... k_{s-1}$ with $1\leq k_r\leq M$. Fix $s$, $m$ and $n$. If
    $(B^s)_{m,n}=0$, then as every element of $B$ is non-negative,
    every term in the above sum must be zero. This implies that for
    each term, at least one of the $b_{q,r}$ factors must be
    zero. This implies by the hypothesis that the corresponding
    $a_{q,r}$ factor in the corresponding term in the sum for
    $(A^s)_{m,n}$ must be zero. As this holds for every term in the
    above sums, we have $(A^s)_{m,n}=0$
    \end{proof}
\end{lemma}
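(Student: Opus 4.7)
The plan is to exploit the explicit combinatorial expansion of matrix powers. For any $M \times M$ matrix $C$, the entry $(C^s)_{m,n}$ can be written as a sum over all index sequences $k_1,\dots,k_{s-1}$ in $\{1,\dots,M\}$ of the product $c_{m,k_1} c_{k_1,k_2} \cdots c_{k_{s-1},n}$. Each such term is naturally indexed by a length-$s$ walk from $m$ to $n$, and the key observation is that the same index set parametrizes the expansions of both $A^s$ and $B^s$. This lets me compare the two sums term-by-term.

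First, I would fix $s$, $m$, $n$ and assume $(B^s)_{m,n}=0$. Because every $b_{i,j} \geq 0$, every individual term $b_{m,k_1}\cdots b_{k_{s-1},n}$ in the expansion of $(B^s)_{m,n}$ is a nonnegative number, and a sum of nonnegative numbers is zero only when every summand is zero. Hence for every walk $(k_1,\dots,k_{s-1})$ at least one factor $b_{k_r,k_{r+1}}$ (or the end factors) must vanish.

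Next, the hypothesis that $B_{i,j}=0$ implies $A_{i,j}=0$ is applied factor-by-factor: whichever factor of the $B$-product vanishes for a given walk, the corresponding factor in the matching $A$-product vanishes too, so the entire $A$-product for that walk is zero. Since this holds for every walk indexing a summand of $(A^s)_{m,n}$, summing gives $(A^s)_{m,n}=0$.

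There is no real obstacle here; the only thing to be careful about is that the argument genuinely uses the non-negativity of the entries of $B$, because otherwise one could only conclude that the sum vanishes, not that each summand does. The identity of the index sets in the two expansions is automatic from the common dimension $M$, so no further bookkeeping is needed, and the result holds for all $s \geq 1$.
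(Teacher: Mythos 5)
Your argument is exactly the paper's proof: expand $(A^s)_{m,n}$ and $(B^s)_{m,n}$ as sums over index sequences, use non-negativity of $B$ to force every summand of $(B^s)_{m,n}$ to vanish, and transfer the vanishing factor to the matching $A$-term via the hypothesis. The proposal is correct and takes essentially the same approach as the paper.
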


We now state the localization result for integer powers of the
Laplacian.
\begin{lemma}\label{lemma:laplacian_power}
  Let $G$ be a weighted graph, $\L$ the graph Laplacian (normalized or
  non-normalized) and $s>0$ an integer. For any two vertices $m$ and
  $n$, if $d_G(m,n)>s$ then $(\L^s)_{m,n} =0$.
  \begin{proof}
    Set the $\{0,1\}$ valued matrix $B$ such that $B_{q,r}=0$ if
    $\L_{q,r}=0$, otherwise $B_{q,r}=1$. The $B$ such defined will be
    the same whether $\L$ is normalized or non-normalized. $B$ is
    equal to the adjacency matrix of the binarized graph, but with
    $1$'s on the diagonal. According to Lemma \ref{lemma:pathcount},
    $(B^s)_{m,n}$ equals the number of paths of length $s$ or less
    from $m$ to $n$. As $d_G(m,n)>s$ there are no such paths, so
    $(B^s)_{m,n}=0$. But then Lemma \ref{lemma:posmat_power} shows
    that $(\L^s)_{m,n}=0$.
  \end{proof}
\end{lemma}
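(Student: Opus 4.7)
My plan is to chain together the two preceding lemmas. The key observation is that Lemma \ref{lemma:pathcount} gives a combinatorial interpretation of the entries of integer powers of the binary adjacency matrix (with added self-loops), while Lemma \ref{lemma:posmat_power} lets me transfer zero-pattern information from a nonnegative ``dominating'' matrix to any matrix whose sparsity pattern it contains. So I want to find a nonnegative matrix $B$ that (i) dominates the sparsity pattern of $\L$ and (ii) is precisely the object to which Lemma \ref{lemma:pathcount} applies.

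First, I would define $B$ as the $\{0,1\}$-valued matrix with $B_{q,r}=1$ whenever $\L_{q,r}\neq 0$ and $B_{q,r}=0$ otherwise. The point is that $B$ is the same for both the normalized and non-normalized Laplacian: off the diagonal, $\L_{q,r}\neq 0$ iff there is an edge between $q$ and $r$, and on the diagonal the entries of $\L$ are nonzero (the degree, respectively $1$ minus the loop contribution). Thus $B$ coincides with the matrix $\tilde B$ of Lemma \ref{lemma:pathcount}: the adjacency matrix of the binarized graph augmented with unit loops.

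Next, I would apply Lemma \ref{lemma:pathcount} to conclude that $(B^s)_{m,n}$ counts the number of paths of length at most $s$ from $m$ to $n$ in the binarized graph. Since $d_G(m,n)>s$ by hypothesis, no such paths exist, so $(B^s)_{m,n}=0$. Finally, I would invoke Lemma \ref{lemma:posmat_power} with $A=\L$ and this $B$: the hypothesis that $B_{q,r}=0$ implies $\L_{q,r}=0$ holds by construction, and $B$ is nonnegative, so $(B^s)_{m,n}=0$ forces $(\L^s)_{m,n}=0$.

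The main subtlety is step one, namely making sure the dominating matrix is chosen so that its powers count the right thing. If I forgot to add the self-loops and used only the binarized adjacency, then $(B^s)_{m,n}$ would count paths of \emph{exactly} length $s$, and the argument would fail whenever $d_G(m,n)<s$ but no path of exactly length $s$ exists (for instance in bipartite situations). Including the loops absorbs paths of all lengths up to $s$ into a single count, which is exactly what is needed to handle the full range of allowed distances. The remainder of the argument is then a direct composition of the two lemmas.
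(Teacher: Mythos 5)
Your proposal is correct and follows essentially the same route as the paper: binarize the sparsity pattern of $\L$ into the loop-augmented adjacency matrix $\tilde B$, use Lemma \ref{lemma:pathcount} to interpret $(\tilde B^s)_{m,n}$ as a count of paths of length at most $s$, and transfer the resulting zero back to $(\L^s)_{m,n}$ via Lemma \ref{lemma:posmat_power}. Your added remark about why the self-loops are essential (counting paths of length exactly $s$ would not suffice) is a nice clarification but does not change the argument.
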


We now proceed to examining how perturbations in the kernel $g$ affect
the wavelets in the vertex domain. If two kernels $g$ and $g'$ are
close to each other in some sense, then the resulting wavelets should
be close to each other. More precisely, we have

\begin{lemma}\label{lemma:kernel_perturbation}
  Let $\psi_{t,n}=T^t_g \delta_n$ and $\psi'_{t,n}= T^t_{g'}$ be the
  wavelets at scale $t$ generated by the kernels $g$ and $g'$. If
  $\left|g(t\lambda)-g'(t\lambda)\right| \leq M(t)$ for all 
  $\lambda \in [0,\lambda_{N-1}]$, then
  $|\psi_{t,n}(m) - \psi'_{t,n}(m)| \leq M(t)$ for each vertex $m$.
  Additionally, 
  $\norm{\psi_{t,n}-\psi'_{t,n}}_2\leq \sqrt{N} M(t)$. 
  \begin{proof}
    First recall that
    $\psi_{t,n}(m)=\ip{\delta_m}{g(t\L)\delta_n}$. Thus,
 
    \begin{align} \label{eq:lemma_kernel_perturbation1}
      |\psi_{t,n}(m)-\psi'_{t,n}(m)| & =
      |\ip{\delta_m}{\left(g(t\L) - g'(t\L)\right) \delta_n}| \nt \\
      &=\left| \sum_\l \evec_\l(m) (g(t\lambda_\l)-g'(t\lambda_\l))
        \evec^*_\l(n)
      \right| \nt\\
      &\leq M(t) \sum_\l |\evec_\l(m) \evec_\l(n)^*| 
    \end{align}
    where we have used the Parseval relation (\ref{eq:parseval}) on the second
    line.  By Cauchy-Schwartz, the above sum over $\l$ is bounded by 1
    as
    \begin{equation}
      \sum_\l |\evec_\l(m)\evec^*_\l(n)| \leq 
      \left(\sum_\l | \evec_\l(m) |^2\right)^{1/2}
      \left(\sum_\l | \evec^*_\l(n) |^2\right)^{1/2}, 
    \end{equation}
    and $\sum_\l |\evec_\l(m)|^2=1$ for all $m$, as the $\evec_\l$
    form a complete orthonormal basis. Using this bound in
    (\ref{eq:lemma_kernel_perturbation1}) proves the first statement.

    The second statement follows immediately as
    \begin{align}
      \norm{\psi_{t,n}-\psi'_{t,n}}^2_2 = 
      \sum_m \left( \psi_{t,n}(m)-\psi'_{t,n}(m)\right)^2
      \leq \sum_m M(t)^2 = N M(t)^2
    \end{align}
\end{proof}
\end{lemma}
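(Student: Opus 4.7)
The plan is to reduce both inequalities to a spectral computation by representing $\psi_{t,n}(m)$ as a matrix element $\ip{\delta_m}{g(t\L)\delta_n}$ and expanding in the Laplacian eigenbasis. Concretely, I first observe that $\psi_{t,n}(m)-\psi'_{t,n}(m) = \ip{\delta_m}{(g(t\L)-g'(t\L))\delta_n}$, and using Parseval (\ref{eq:parseval}) together with the Fourier expansions of $\delta_m$ and $\delta_n$, I rewrite this as
\begin{equation}
\psi_{t,n}(m)-\psi'_{t,n}(m) = \sum_{\l} \evec_\l(m)\bigl(g(t\lambda_\l)-g'(t\lambda_\l)\bigr)\evec_\l^*(n).
\end{equation}

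Next, I take absolute values inside the sum and use the hypothesis $|g(t\lambda_\l)-g'(t\lambda_\l)|\leq M(t)$ uniformly in $\l$ (valid because each $\lambda_\l\in[0,\lambda_{N-1}]$). This pulls out the factor $M(t)$ and leaves $\sum_\l |\evec_\l(m)\evec_\l^*(n)|$. I then apply Cauchy--Schwarz to this sum and invoke the fact that $\sum_\l|\evec_\l(m)|^2 = 1$ for every $m$, which is an immediate consequence of the $\{\evec_\l\}$ forming a complete orthonormal basis (equivalently, Parseval applied to $\delta_m$). This yields the pointwise bound $|\psi_{t,n}(m)-\psi'_{t,n}(m)|\leq M(t)$.

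For the second statement, I simply square the pointwise bound, sum over all $N$ vertices, and take the square root, giving $\norm{\psi_{t,n}-\psi'_{t,n}}_2 \leq \sqrt{N}\,M(t)$.

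There is no serious obstacle here; the only subtlety is making sure the uniform bound on $|g(t\lambda)-g'(t\lambda)|$ is used only at the discrete spectral values $\lambda_\l$, which lie in $[0,\lambda_{N-1}]$ by construction, and keeping straight that the Cauchy--Schwarz step gives exactly $1$ rather than a dimension-dependent constant. The $\sqrt{N}$ factor in the $L^2$ bound is therefore sharp only in the sense that it comes from summing $N$ identical pointwise bounds, not from any loss in Cauchy--Schwarz.
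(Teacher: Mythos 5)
Your proposal is correct and follows essentially the same argument as the paper: expand the matrix element $\ip{\delta_m}{(g(t\L)-g'(t\L))\delta_n}$ in the Laplacian eigenbasis, pull out the uniform bound $M(t)$, control $\sum_\l|\evec_\l(m)\evec_\l^*(n)|$ by Cauchy--Schwarz and orthonormality, and obtain the $\l^2$ bound by summing the squared pointwise bound over the $N$ vertices. No differences worth noting.
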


We will prove the final localization result for kernels $g$ which have
a zero of integer multiplicity at the origin. Such kernels can be
approximated by a single monomial for small scales.
\begin{lemma} \label{lemma:monomial_approx}
  Let $g$ be $K+1$ times continuously differentiable, satisfying
  $g(0)=0$, $g^{(r)}(0)=0$ for all $r<K$, and $g^{(K)}(0)=C\neq 0$. Assume
  that there is some $t'>0$ such that $|g^{(K+1)}(\lambda)| \leq B$ for
  all $\lambda\in [0,t'\lambda_{N-1}]$. Then, for 
  $g'(t\lambda)=(C/K!)(t\lambda)^K$ we have
\begin{equation}
  M(t) = \sup_{\lambda\in[0,\lambda_{N-1}]} |g(t\lambda)-g'(t\lambda)| 
\leq t^{K+1} \frac{\lambda_{N-1}^{K+1}}{(K+1)!}B
\end{equation}
for all $t<t'$.
\begin{proof}
  As the first $K-1$ derivatives of $g$ are zero, Taylor's formula
  with remainder shows, for any values of $t$ and $\lambda$,
\begin{equation}
  g(t\lambda) = C \frac{(t\lambda)^K}{K!} +
  g^{(K+1)}(x^*)\frac{(t\lambda)^{K+1}}{(K+1)!}
\end{equation}
for some $x^*\in[0,t\lambda]$. Now fix $t<t'$. For any
$\lambda\in[0,\lambda_{N-1}]$, we have $t\lambda < t'\lambda_{N-1}$,
and so the corresponding $x^*\in[0,t'\lambda_{N-1}]$, and so
$|g^{(K+1)}(x^*)\leq B$. This implies
\begin{equation}
  \left|g(t\lambda)-g'(t\lambda)\right|
\leq B \frac{t^{K+1} \lambda^{K+1}}{(K+1)!} 
\leq B \frac{t^{K+1} \lambda_{N-1}^{K+1}}{(K+1)!} 
\end{equation}
As this holds for all $\lambda\in[0,\lambda_{N-1}]$, taking the
sup over $\lambda$ gives the desired result.
\end{proof}
\end{lemma}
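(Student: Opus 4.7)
The plan is to derive the bound directly from Taylor's theorem with Lagrange remainder, expanded at the origin. Since $g$ is $K+1$ times continuously differentiable and its first $K-1$ derivatives all vanish at $0$ while $g^{(K)}(0)=C$, the Taylor expansion of $g$ about $0$ evaluated at $t\lambda$ collapses to a single nonzero polynomial term plus a remainder, namely
\begin{equation*}
g(t\lambda) = \frac{C}{K!}(t\lambda)^K + \frac{g^{(K+1)}(x^*)}{(K+1)!}(t\lambda)^{K+1}
\end{equation*}
for some $x^* \in [0, t\lambda]$ depending on $t$ and $\lambda$. The leading term is exactly $g'(t\lambda)$ by the definition in the statement, so the difference $g(t\lambda)-g'(t\lambda)$ is exactly the Lagrange remainder.

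The next step is to control $|g^{(K+1)}(x^*)|$ uniformly. Fixing any $t < t'$ and any $\lambda \in [0,\lambda_{N-1}]$, we have $t\lambda \leq t'\lambda_{N-1}$, so the intermediate point $x^*$ lies in $[0, t'\lambda_{N-1}]$, which is precisely the interval on which the hypothesis gives $|g^{(K+1)}| \leq B$. Substituting this bound into the remainder yields
\begin{equation*}
|g(t\lambda) - g'(t\lambda)| \leq B\,\frac{t^{K+1}\lambda^{K+1}}{(K+1)!}.
\end{equation*}

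Finally, the right-hand side is monotone increasing in $\lambda$, so taking the supremum over $\lambda \in [0,\lambda_{N-1}]$ replaces $\lambda^{K+1}$ by $\lambda_{N-1}^{K+1}$ and gives the claimed bound on $M(t)$. There is no real obstacle here; the only thing requiring minor care is ensuring that the intermediate point $x^*$ produced by Taylor's theorem actually falls inside the interval where the derivative bound $B$ is assumed to hold, which is exactly why the hypothesis is formulated on $[0,t'\lambda_{N-1}]$ and why the conclusion is restricted to $t<t'$.
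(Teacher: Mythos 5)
Your proof is correct and follows essentially the same route as the paper's: Taylor's theorem with Lagrange remainder at the origin, using the vanishing of the lower-order derivatives to reduce the expansion to the monomial $g'(t\lambda)$ plus the remainder, then bounding $|g^{(K+1)}(x^*)|$ by $B$ since $x^*\in[0,t'\lambda_{N-1}]$, and finally taking the supremum over $\lambda$. No differences worth noting.
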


We are now ready to state the complete localization result. Note that
due to the normalization chosen for the wavelets, in general
$\psi_{t,n}(m)\to 0$ as $t \to 0$ for all $m$ and $n$. Thus a non
vacuous statement of localization must include a renormalization
factor in the limit of small scales.

\begin{theorem}\label{theorem:localization}
  Let $G$ be a weighted graph with Laplacian $\L$. Let $g$ be a kernel
  satisfying the hypothesis of Lemma \ref{lemma:monomial_approx}, with
  constants $t'$ and $B$.  Let $m$ and $n$ be vertices of $G$ such
  that $d_G(m,n) > K$. Then there exist constants $D$ and $t''$, such
  that 
  \begin{equation}
    \frac{\psi_{t,n}(m)}{\norm{\psi_{t,n}}} \leq D t
  \end{equation}
  for all $t<\min(t',t'')$.
\begin{proof}
  Set $g'(\lambda) = \frac{g^{(K)}(0)}{K!} \lambda^K$ and
  $\psi'_{t,n} = T^t_{g'} \delta_n$. We have
  \begin{equation}
    \psi'_{t,n}(m)=\frac{g^{(K)}(0)}{K!} t^K \ip{\delta_m}{\L^K\delta_n}=0
  \end{equation}
  by Lemma \ref{lemma:laplacian_power}, as $d_G(m,n)>K$. By the
  results of Lemmas \ref{lemma:kernel_perturbation} and
  \ref{lemma:monomial_approx}, we have
  \begin{equation}\label{eq:lemma_laplacian_power3}
  |\psi_{t,n}(m) -\psi'_{t,n}(m)| = |\psi_{t,n}(m)| \leq t^{K+1} C'
  \end{equation}
  for $C'=\frac{\lambda_{N-1}^{K+1}}{(K+1)!}B$. Writing $\psi_{t,n} =
  \psi'_{t,n} + (\psi_{t,n} - \psi'_{t,n})$ and applying the triangle
  inequality shows
  \begin{equation} \label{eq:lemma_laplacian_power4}
    \norm{\psi'_{t,n}} - \norm{\psi_{t,n}-\psi'_{t,n}} \leq \norm{\psi_{t,n}}
  \end{equation}
  We may directly calculate $\norm{\psi'_{t,n}} = t^K
  \frac{g^{(K)}(0)}{K!}  \norm{\L^K\delta_n}$, and we have
  $\norm{\psi_{t,n}-\psi'_{t,n}} \leq \sqrt{N} t^{K+1}
  \frac{\lambda_{N-1}^{K+1}}{(K+1)!} B$ from Lemma
  \ref{lemma:monomial_approx}. These imply together that the l.h.s. of
  (\ref{eq:lemma_laplacian_power4}) is greater than or equal to
  $t^K\left(\frac{g^{(K)}(0)}{K!}  \norm{\L^K\delta_n} - t
    \sqrt{N}\frac{\lambda_{N-1}^{K+1}}{(K+1)!} B\right)$. Together with
  (\ref{eq:lemma_laplacian_power3}), this shows

  \begin{equation}\label{eq:lemma_laplacian_power5}
    \frac{\psi_{t,n}(m)}{\norm{\psi_{t,n}}} \leq
    \frac{tC'}{a-tb}
  \end{equation}
  with $a=\frac{g^{(K)}(0)}{K!}  \norm{\L^K\delta_n}$ and
  $b=\sqrt{N}\frac{\lambda_{N-1}^{K+1}}{(K+1)!}B$. An elementary
  calculation shows $\frac{C't}{a-tb} \leq \frac{2C'}{a} t$ if $t\leq
  \frac{a}{2b}$. This implies the desired result with
  $D=\frac{2C'K!}{g^{(K)}(0)\norm{\L^K\delta_n}}$ and 
  $t''=\frac{g^{(K)}(0)\norm{\L^K\delta_n}(K+1)}{2\sqrt{N}\lambda_{N-1}^{K+1} B}$.
\end{proof}
\end{theorem}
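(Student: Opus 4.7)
The heart of the argument is a comparison between the actual wavelet $\psi_{t,n}=T^t_g\delta_n$ and its ``pure-monomial'' approximation $\psi'_{t,n}=T^t_{g'}\delta_n$, where $g'(\lambda)=\tfrac{g^{(K)}(0)}{K!}\lambda^K$. The advantage of this pivot is that $\psi'_{t,n}(m)=\tfrac{g^{(K)}(0)}{K!}\,t^K\,(\L^K)_{m,n}$, and the hypothesis $d_G(m,n)>K$ forces $(\L^K)_{m,n}=0$ by Lemma \ref{lemma:laplacian_power}. Hence $\psi'_{t,n}(m)=0$ exactly, so $\psi_{t,n}(m)$ is already seen to be a perturbation of size $O(t^{K+1})$ before any further estimates.

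Making this precise requires combining the two perturbation lemmas. Lemma \ref{lemma:monomial_approx} supplies a uniform kernel bound $M(t)\leq C' t^{K+1}$ on $[0,\lambda_{N-1}]$ with $C'=\tfrac{\lambda_{N-1}^{K+1}}{(K+1)!}B$, valid for $t<t'$. Feeding $M(t)$ into Lemma \ref{lemma:kernel_perturbation} yields, using $\psi'_{t,n}(m)=0$, the pointwise bound $|\psi_{t,n}(m)|\leq C' t^{K+1}$ as well as the $\ell^2$ bound $\norm{\psi_{t,n}-\psi'_{t,n}}\leq \sqrt{N}\,C' t^{K+1}$. The norm of the pivot is computed directly: $\norm{\psi'_{t,n}}=t^K\,\tfrac{g^{(K)}(0)}{K!}\,\norm{\L^K\delta_n}$. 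The reverse triangle inequality then produces
\begin{equation*}
\norm{\psi_{t,n}}\;\geq\; t^K\bigl(a-tb\bigr),\qquad a=\tfrac{g^{(K)}(0)}{K!}\norm{\L^K\delta_n},\quad b=\sqrt{N}\,C'.
\end{equation*}

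Dividing the numerator estimate by this denominator estimate, the factor $t^K$ cancels and one is left with $\tfrac{|\psi_{t,n}(m)|}{\norm{\psi_{t,n}}}\leq \tfrac{C' t}{a-tb}$. Setting $t''=a/(2b)$ guarantees $a-tb\geq a/2$ for $t\leq t''$, and the claim follows with $D=2C'/a$, restricted further by $t<t'$ inherited from Lemma \ref{lemma:monomial_approx}.

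The main obstacle I anticipate is the lower bound on $\norm{\psi_{t,n}}$: because $g(0)=0$, the wavelet itself vanishes as $t\to 0$, so a naive lower bound is useless and one must extract the correct $t^K$ rate. The monomial pivot is precisely what isolates this leading behaviour, with the perturbation of the generator being of strictly lower order $t^{K+1}$ so that it cannot swamp the main term for small $t$. A minor technical side condition is that $\norm{\L^K\delta_n}\neq 0$, so that $a>0$ and the bound is meaningful; on a connected graph with $K\geq 1$ this is automatic, since $\L\delta_n$ already has nonzero mass at $n$ and its neighbours.
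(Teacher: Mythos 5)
Your proposal is correct and follows essentially the same route as the paper's proof: the monomial pivot $g'(\lambda)=\frac{g^{(K)}(0)}{K!}\lambda^K$, the vanishing of $\psi'_{t,n}(m)$ via Lemma \ref{lemma:laplacian_power}, the two perturbation bounds from Lemmas \ref{lemma:kernel_perturbation} and \ref{lemma:monomial_approx}, the reverse triangle inequality giving $\norm{\psi_{t,n}}\geq t^K(a-tb)$, and the choice $t''=a/(2b)$, $D=2C'/a$ all match the paper exactly. Your closing remark that one needs $\norm{\L^K\delta_n}\neq 0$ is a point the paper leaves implicit, and your justification of it is a welcome addition.
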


\subsection{Spectral Graph Wavelet Frames}\label{sec:wavelet_frames}

The spectral graph wavelets depend on the continuous scale parameter
$t$. For any practical computation, $t$ must be sampled to a finite
number of scales. Choosing $J$ scales $\{t_j\}_{j=1}^J$ will yield a
collection of $NJ$ wavelets $\psi_{{t_j},n}$, along with the $N$
scaling functions $\scalf_n$.

It is a natural question to ask how well behaved this set of vectors
will be for representing functions on the vertices of the graph. We
will address this by considering the wavelets at discretized scales as
a frame, and examining the resulting frame bounds.

We will review the basic definition of a frame. A more complete
discussion of frame theory may be found in \cite{Daubechies1992} and
\cite{Heil1989}. Given a Hilbert space $\H$, a set of vectors
$\Gamma_k\in\H$ form a frame with frame bounds $A$ and $B$ if the inequality
\begin{equation}
  A \norm{f}^2 \leq \sum_k |\ip{f}{\Gamma_k}|^2 \leq B \norm{f}^2
\end{equation}
holds for all $f\in \H$.


The frame bounds $A$ and $B$ provide information about the numerical
stability of recovering the vector $f$ from inner product measurements
$\ip{f}{\Gamma_k}$. These correspond to the scaling function
coefficients $S_f(n)$ and wavelet coefficients $W_f(t_j,n)$ for the
frame consisting of the scaling functions and the spectral graph
wavelets with sampled scales.  As we shall see later in section
\ref{sec:rec}, the speed of convergence of algorithms used to invert
the spectral graph wavelet transform will depend on the frame bounds.


\begin{theorem}\label{theorem:sgw_frame}
  Given a set of scales $\{t_j\}_{j=1}^J$, the set $F=\{\scalf_{n}\}_{n=1}^N \cup
  \{\psi_{t_j,n}\}_{j=1}^J~_{n=1}^{N}$ forms a frame with bounds $A$,
  $B$ given by
  \begin{align}
    &A = \min_{\lambda\in[0,\lambda_{N-1}]} G(\lambda) \\ \nt
    &B = \max_{\lambda\in[0,\lambda_{N-1}]} G(\lambda),  \\ \nt
  \end{align}\label{eq:sgw_frame1}
  where $G(\lambda)=h^2(\lambda)+\sum_j g(t_j \lambda)^2$.
\begin{proof}
  Fix $f$. Using expression (\ref{eq:w_fourier_exp}), we see
  \begin{align} \label{eq:sgw_frame2}
  \sum_n|W_f(t,n)|^2 &=\sum_n \sum_\l g(t\lambda_\l)\evec_\l(n) \hat{f}(\l) 
  \sum_{\l'} \overline{g(t\lambda_{\l'})  \evec_{\l'}(n)  \hat{f}(\l')} \\ \nt
   &= \sum_\l |g(t\lambda_\l)|^2 |\hat{f}(\l)|^2 \nt
  \end{align}
  upon rearrangement and using $\sum_n \evec_\l(n) \overline{\evec_{\l'}(n)}
  = \delta_{\l,\l'}$. Similarly, 
\begin{equation} \label{eq:sgw_frame3}
  \sum_n |S_f(n)|^2 = \sum_\l |h(\lambda_\l)|^2 |\hat{f}(\l)|^2
\end{equation}
Denote by $Q$ the sum of squares of inner products of $f$ with vectors
in the collection $F$. Using (\ref{eq:sgw_frame2}) and
(\ref{eq:sgw_frame3}), we have
  \begin{align}\label{eq:sgw_frame4}
    Q &= \sum_\l \left(
      |h(\lambda_\l)|^2  + 
      \sum_{j=1}^J |g(t_j\lambda_\l)|^2 
      \right) |\hat{f}(\l)|^2 = \sum_\l G(\lambda_\l) |\hat{f}(\lambda_\l)|^2
  \end{align}
Then by the definition of $A$ and $B$, we have
\begin{equation}
A \sum_{\l=0}^{N-1} |\hat{f}(\l)|^2 \leq Q \leq B \sum_{\l=0}^{N-1} |\hat{f}(\l)|^2
\end{equation}
Using the Parseval relation $\norm{f}^2 = \sum_\l |\hat{f}(\l)|^2$ then
gives the desired result. 
\end{proof}
\end{theorem}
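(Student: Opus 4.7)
The plan is to establish the spectral identity $Q := \sum_n |S_f(n)|^2 + \sum_{j=1}^J \sum_n |W_f(t_j,n)|^2 = \sum_\l G(\lambda_\l)|\hat{f}(\l)|^2$ by a direct computation in the graph Fourier basis, and then to bound $G$ pointwise on the spectrum by $A$ and $B$ and invoke Parseval to convert the resulting inequalities into the frame bounds.

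First I would use the Fourier expansion (\ref{eq:w_fourier_exp}) to write $W_f(t,n) = \sum_\l g(t\lambda_\l) \hat{f}(\l) \evec_\l(n)$. Squaring, summing over $n$, and swapping the order of summation reduces the resulting double spectral sum via the orthonormality relation $\sum_n \evec_\l(n)\overline{\evec_{\l'}(n)} = \delta_{\l,\l'}$, giving $\sum_n |W_f(t,n)|^2 = \sum_\l |g(t\lambda_\l)|^2 |\hat{f}(\l)|^2$. The identical computation applied to $S_f(n) = \ip{\scalf_n}{f}$ (expanded using the spectral representation of $h(\L)$) yields $\sum_n |S_f(n)|^2 = \sum_\l |h(\lambda_\l)|^2 |\hat{f}(\l)|^2$.

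Second, summing the scaling function contribution together with the $J$ wavelet scales gives $Q = \sum_\l \left(h(\lambda_\l)^2 + \sum_{j=1}^J g(t_j\lambda_\l)^2\right)|\hat{f}(\l)|^2 = \sum_\l G(\lambda_\l)|\hat{f}(\l)|^2$, the promised spectral identity.

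Finally, since every eigenvalue $\lambda_\l$ lies in $[0,\lambda_{N-1}]$, the definitions of $A$ and $B$ give $A\leq G(\lambda_\l)\leq B$ for each $\l$, so term-by-term bounding yields $A\sum_\l|\hat{f}(\l)|^2 \leq Q \leq B\sum_\l|\hat{f}(\l)|^2$, and the Parseval relation (\ref{eq:parseval}) replaces $\sum_\l|\hat{f}(\l)|^2$ by $\norm{f}^2$, delivering the two frame inequalities. The whole argument is essentially a direct calculation; no serious obstacle arises. The only point requiring care is the bookkeeping in the initial swap of summations, where the orthonormality of the $\evec_\l$ is precisely what eliminates the off-diagonal cross terms in the expansion of $|W_f(t,n)|^2$ and collapses the double spectral sum back to a single one.
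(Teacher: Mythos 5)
Your proposal is correct and follows essentially the same route as the paper's own proof: expand $W_f(t,n)$ and $S_f(n)$ in the graph Fourier basis, collapse the double spectral sum via orthonormality of the $\evec_\l$ to obtain $Q=\sum_\l G(\lambda_\l)|\hat{f}(\l)|^2$, then bound $G$ on the spectrum and apply Parseval. No gaps; nothing further to add.
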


\section{Polynomial Approximation and Fast SGWT}
\label{sec:polyapprox}

We have defined the SGWT explicitly in the space of eigenfunctions of
the graph Laplacian. The naive way of computing the transform, by
directly using equation (\ref{eq:w_fourier_exp}), requires explicit
computation of the entire set of eigenvectors and eigenvalues of
$\L$. This approach scales poorly for large graphs.  General purpose
eigenvalue routines such as the QR algorithm have computational
complexity of $O(N^3)$ and require $O(N^2)$ memory
\cite{Watkins2007}. Direct computation of the SGWT through
diagonalizing $\L$ is feasible only for graphs with fewer than a few
thousand vertices. In contrast, problems in signal and image
processing routinely involve data with hundreds of thousands or
millions of dimensions. Clearly, a fast transform that avoids the need
for computing the complete spectrum of $\L$ is needed for the SGWT to
be a useful tool for practical computational problems.

We present a fast algorithm for computing the SGWT that is based on
approximating the scaled generating kernels $g$ by low order
polynomials. Given this approximation, the wavelet coefficients at
each scale can then be computed as a polynomial of $\L$ applied to the
input data. These can be calculated in a way that accesses $\L$ only
through repeated matrix-vector multiplication. This results in an
efficient algorithm in the important case when the graph is sparse,
i.e. contains a small number of edges.

We first show that the polynomial approximation may be taken over a
finite range containing the spectrum of $\L$.
\begin{lemma} \label{lemma:polyapprox} 
  Let $\lambda_{max}\geq \lambda_{N-1}$ be any upper bound on the
  spectrum of $\L$. For fixed $t>0$, let $p(x)$ be a polynomial
  approximant of $g(tx)$ with $L_{\infty}$ error
  $B=\sup_{x\in[0,\lambda_{max}]}|g(tx)-p(x)|$. 
  Then the approximate wavelet coefficients $\tilde{W}_f(t,n) =
  \left( p(\L) f \right)_n$ satisfy
  \begin{equation}
    |W_f(t,n)-\tilde{W}_f(t,n)| \leq B \norm{f}
  \end{equation}
  \begin{proof}
    Using equation (\ref{eq:w_fourier_exp}) we have
    \begin{align}
      |W_f(t,n)-\tilde{W}_f(t,n)| & = 
      \left|\sum_\l g(t\lambda_\l) \hat{f}(\l)\evec_\l(n) - 
       \sum_\l p(\lambda_\l) \hat{f}(\l) \evec_\l(n)
     \right| \nt \\
     & \leq \sum_{l} |g(t\lambda_\l)-p(\lambda_\l)| | \hat{f}(\l)
     \evec_\l(n)| \label{eq1:lemma:polyapprox} \\
     & \leq B \norm{f} 
     \end{align}
     The last step follows from using Cauchy-Schwartz and the
     orthonormality of the $\evec_\l$'s.
  \end{proof}
 \end{lemma}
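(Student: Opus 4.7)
The plan is to transfer the computation to the graph Fourier domain, where both wavelet coefficients admit clean spectral expansions, and then exploit the uniform bound $B$ pointwise on the spectrum of $\L$.

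First I would use equation (\ref{eq:w_fourier_exp}) to write $W_f(t,n) = \sum_\l g(t\lambda_\l)\hat f(\l)\evec_\l(n)$. The key observation is that because $p$ is a polynomial, $p(\L)$ is defined by the same functional calculus as $g(t\L)$, so repeating the derivation of (\ref{eq:w_fourier_exp}) with $g(t\cdot)$ replaced by $p$ yields $\tilde W_f(t,n) = (p(\L)f)_n = \sum_\l p(\lambda_\l)\hat f(\l)\evec_\l(n)$. Subtracting collapses the difference into a single spectral sum $\sum_\l (g(t\lambda_\l)-p(\lambda_\l))\,\hat f(\l)\,\evec_\l(n)$.

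Next I would apply the triangle inequality termwise and substitute the hypothesis $|g(t\lambda_\l)-p(\lambda_\l)|\leq B$, which holds because $\lambda_\l\in[0,\lambda_{max}]$ for every $\l$. This reduces the bound to $B\sum_\l |\hat f(\l)|\,|\evec_\l(n)|$, which a Cauchy--Schwarz step then dominates by $B\bigl(\sum_\l |\hat f(\l)|^2\bigr)^{1/2}\bigl(\sum_\l |\evec_\l(n)|^2\bigr)^{1/2}$. The first factor equals $\norm{f}$ by the Parseval relation (\ref{eq:parseval}); the second equals $1$ because orthonormality of the eigenbasis $\{\evec_\l\}$ implies the dual completeness identity $\sum_\l \evec_\l(n)\evec_\l^*(m)=\delta_{n,m}$, which at $m=n$ gives $\sum_\l |\evec_\l(n)|^2=1$.

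I do not anticipate any real obstacle: this is a routine spectral-domain estimate paired with Parseval. The only subtlety worth flagging is the use of the dual orthonormality relation $\sum_\l |\evec_\l(n)|^2=1$ in place of the more commonly invoked $\sum_n \evec_{\l'}^*(n)\evec_\l(n)=\delta_{\l,\l'}$; both express the same unitarity of the eigenvector matrix, and it is the former that is needed to close the estimate.
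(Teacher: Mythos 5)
Your proposal is correct and follows essentially the same route as the paper: expand both $W_f(t,n)$ and $\tilde W_f(t,n)$ in the graph Fourier basis via (\ref{eq:w_fourier_exp}), bound the spectral difference uniformly by $B$ on $[0,\lambda_{max}]$, and close with Cauchy--Schwarz, Parseval, and the completeness identity $\sum_\l |\evec_\l(n)|^2 = 1$. Your explicit flagging of the row-orthonormality of the eigenvector matrix simply makes precise what the paper compresses into ``Cauchy-Schwartz and the orthonormality of the $\evec_\l$'s.''
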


 {\bf Remark } : The results of the lemma hold for any
 $\lambda_{max}\geq \lambda_{N-1}$. Computing extremal eigenvalues of
 a self-adjoint operator is a well studied problem, and efficient
 algorithms exist that access $\L$ only through matrix-vector
 multiplication, notably Arnoldi iteration or the Jacobi-Davidson
 method \cite{Watkins2007,Sleijpen1996}. In particular, good estimates
 for $\lambda_{N-1}$ may be computed at far smaller cost than
 that of computing the entire spectrum of $\L$.

 For fixed polynomial degree $M$, the upper bound on the approximation
 error from Lemma \ref{lemma:polyapprox} will be minimized if $p$ is
 the minimax polynomial of degree $M$ on the interval
 $[0,\lambda_{max}]$. Minimax polynomial approximations are well
 known, in particular it has been shown that they exist and are unique
 \cite{Cheney1966}. Several algorithms exist for computing minimax
 polynomials, most notably the Remez exchange algorithm
 \cite{Fraser1965}.

 \begin{figure}
   \begin{tabular}{c @{} c}
   \includegraphics[width=.47\columnwidth]{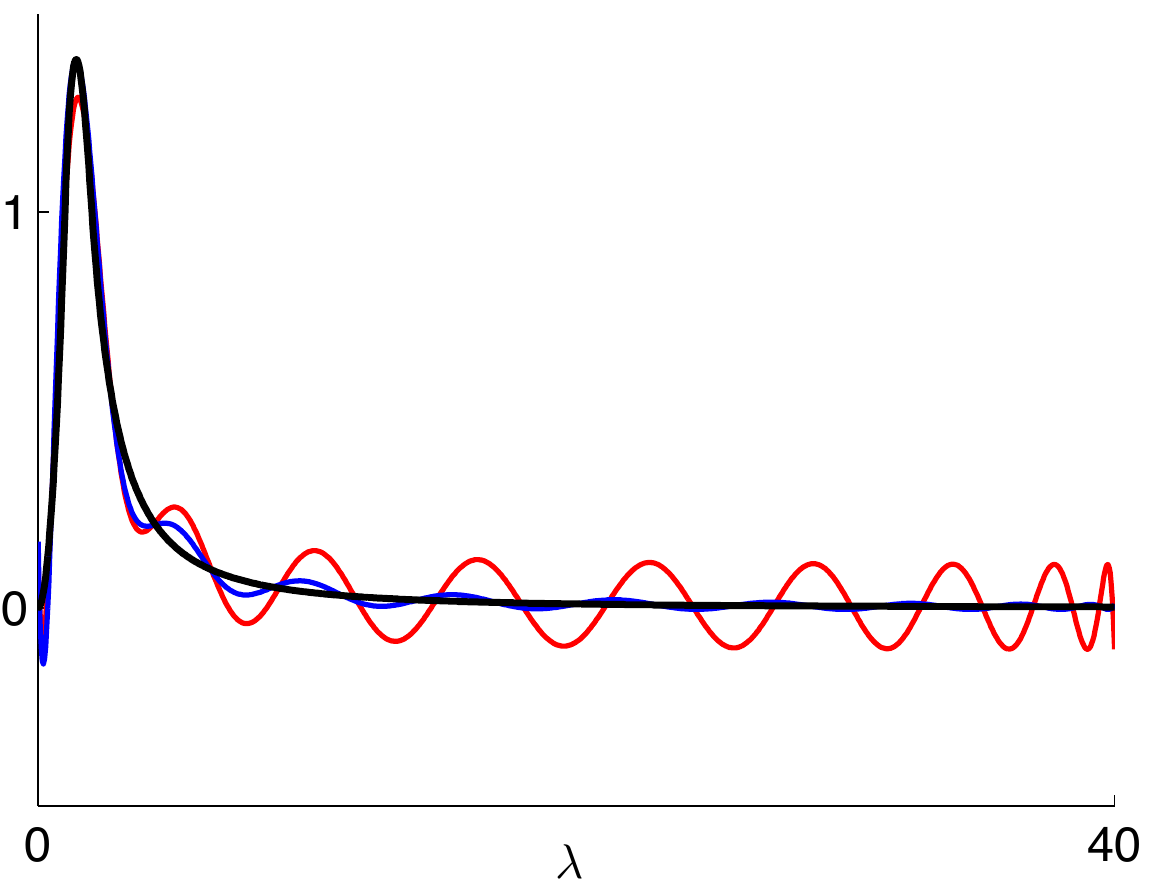} &
   \includegraphics[width=.47\columnwidth]{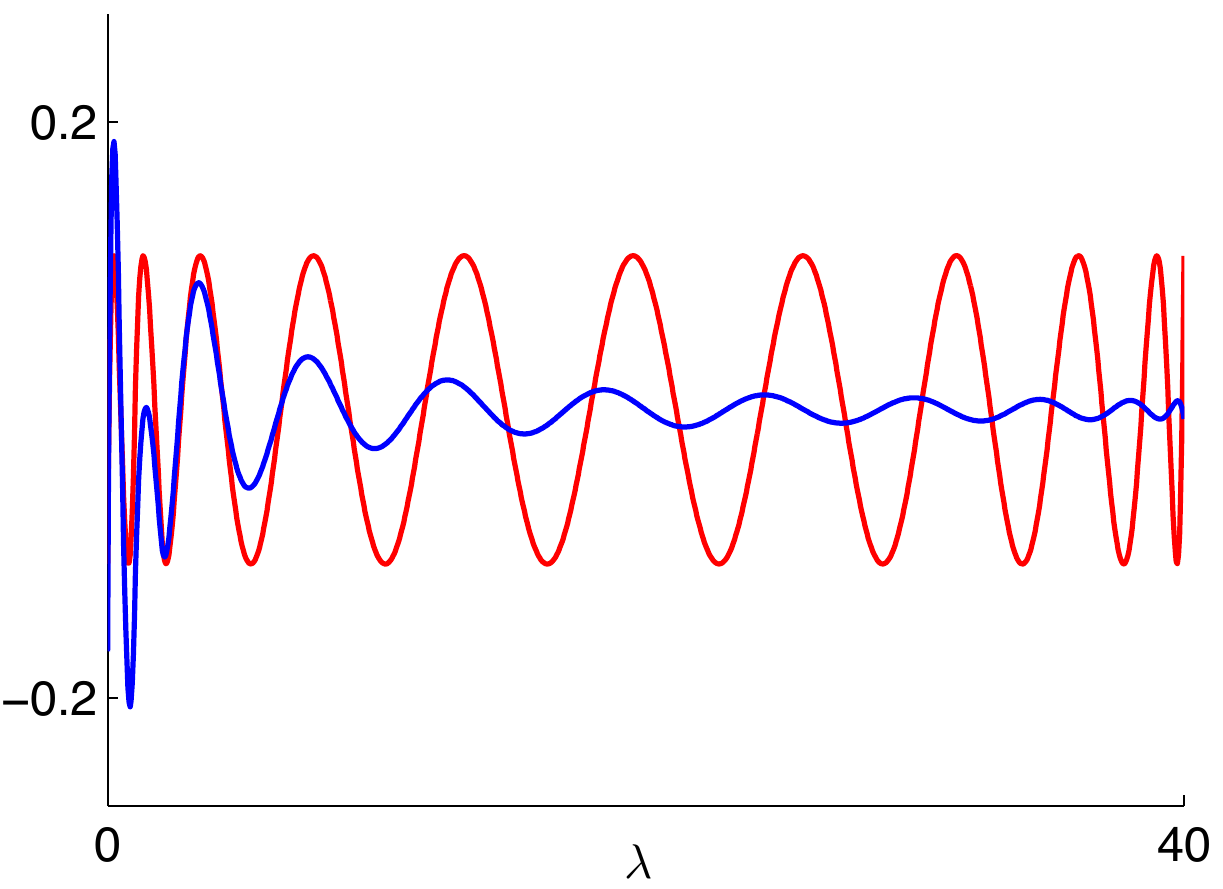}\\
   (a) & (b) \\
   \end{tabular}
   \caption{(a) Wavelet kernel $g(\lambda)$ (black), truncated Chebyshev
     approximation (blue) and minimax polynomial approximation (red)
     for degree $m=20$.  Approximation errors shown in (b), truncated
     Chebyshev polynomial has maximum error 0.206, minimax polynomial
     has maximum error 0.107 .}\label{fig:cheby_vs_minimax}
\end{figure}

In this work, however, we will instead use a polynomial approximation
given by the truncated Chebyshev polynomial expansion of $g(tx)$. It
has been shown that for analytic functions in an ellipse containing
the approximation interval, the truncated Chebyshev expansions gives
an approximate minimax polynomial \cite{Geddes1978}.  Minimax
polynomials of order $m$ are distinguished by having their
approximation error reach the same extremal value at $m+2$ points in
their domain. As such, they distribute their approximation error
across the entire interval. We have observed that for the wavelet
kernels we use in this work, truncated Chebyshev polynomials result in
a minimax error only slightly higher than the true minimax
polynomials, and have a much lower approximation error where the
wavelet kernel to be approximated is smoothly varying. A
representative example of this is shown in Figure
\ref{fig:cheby_vs_minimax}. We have observed that for small weighted
graphs where the wavelets may be computed directly in the spectral
domain, the truncated Chebyshev approximations give slightly lower
approximation error than the minimax polynomial approximations
computed with the Remez algorithm. 
 
 For these reasons, we use approximating polynomials given by
 truncated Chebyshev polynomials. In addition, we will exploit the
 recurrence properties of the Chebyshev polynomials for efficient
 evaluation of the approximate wavelet coefficients.  An overview of
 Chebyshev polynomial approximation may be found in
 \cite{Phillips2003}, we recall here briefly a few of their key
 properties.
 
 The Chebyshev polynomials $T_k(y)$ may be generated by the stable
 recurrence relation $T_k(y) = 2 y T_{k-1}(y) - T_{k-2}(y)$, with $T_0
 = 1$ and $T_1=y$. For $y\in[-1,1]$, they satisfy the trigonometric
 expression $T_k(y) =\cos\left(k\arccos(y)\right)$, which shows that
 each $T_k(y)$ is bounded between -1 and 1 for $y\in[-1,1]$. The
 Chebyshev polynomials form an orthogonal basis for
 $L^2([-1,1],\frac{dy}{\sqrt{1-y^2}})$, the Hilbert space of square
 integrable functions with respect to the measure
 $dy/\sqrt{1-y^2}$. In particular they satisfy
 \begin{equation}
   \int_{-1}^1 \frac{T_l(y) T_m(y)}{\sqrt{1-y^2}} dy = 
   \begin{cases}
     \delta_{l,m} \pi/2 & \mbox { if $m,l>0$ }\\
     \pi & \mbox { if $m=l=0$ }
   \end{cases}
 \end{equation}
 Every $h\in L^2([-1,1],\frac{dy}{\sqrt{1-y^2}})$ has a uniformly
 convergent Chebyshev series 
 \begin{equation}\label{eq:cheby_expansion}
   h(y) = \frac{1}{2} c_0 + \sum_{k=1}^{\infty} c_k T_k(y)
 \end{equation}
 with Chebyshev coefficients
 \begin{equation}
   c_k = \frac{2}{\pi}\int_{-1}^1 \frac{T_k(y) h(y)}{\sqrt{1-y^2}} dy = 
   \frac{2}{\pi} \int_0^\pi \cos(k\theta) h(\cos(\theta)) d\theta
 \end{equation}
 We now assume a fixed set of wavelet scales $t_n$.  For each $n$,
 approximating $g(t_nx)$ for $x\in[0,\lambda_{max}]$ can be done by
 shifting the domain using the transformation $x=a(y+1)$, with
 $a=\lambda_{max}/2$. Denote the shifted Chebyshev polynomials
 $\overline{T}_k(x) = T_k(\frac{x-a}{a})$. We may then write
 \begin{equation} \label{eq:gtn-cheby-expansion}
   g(t_nx)=\frac{1}{2} c_{n,0} +\sum_{k=1}^{\infty} c_{n,k} \overline{T}_k(x),
 \end{equation}
 valid for $x\in[0,\lambda_{max}]$, with
 \begin{equation}
c_{n,k}=\frac{2}{\pi} \int_0^\pi
 \cos(k\theta)g(t_n(a (\cos(\theta)+1) )) d\theta.
\end{equation}

For each scale $t_j$, the approximating polynomial $p_j$ is achieved
by truncating the Chebyshev expansion (\ref{eq:gtn-cheby-expansion})
to $M_j$ terms. We may use exactly the same scheme to approximate the
scaling function kernel $h$ by the polynomial $p_0$.

Selection of the values of $M_j$ may be considered a design problem,
posing a trade-off between accuracy and computational cost. The fast
SGWT approximate wavelet and scaling function coefficients are then
given by
 \begin{align} \label{eq:fast-sgwt}
   \tilde{W}_f(t_j,n) = \left( \frac{1}{2}c_{j,0} f + \sum_{k=1}^{M_j}
   c_{j,k} \overline{T}_k(\L) f \right)_n \nt \\
  \tilde{S}_f(n) = \left( \frac{1}{2}c_{0,0} f + \sum_{k=1}^{M_0}
   c_{0,k} \overline{T}_k(\L) f \right)_n \nt \\
\end{align}

The utility of this approach relies on the efficient computation of
$\overline{T}_k(\L)f$. Crucially, we may use the Chebyshev recurrence
to compute this for each $k<M_j$ accessing $\L$ only through
matrix-vector multiplication. As the shifted Chebyshev polynomials
satisfy $\overline{T}_k(x) = \frac{2}{a}(x-1)\overline{T}_{k-1}(x)
- \overline{T}_{k-2}(x)$, we have for any $f\in \R^N$,
\begin{equation} \label{eq:cheby-f-recurrence}
  \overline{T}_k(\L)f = 
  \frac{2}{a}(\L- I)\left( \overline{T}_{k-1}(\L) f\right)
  - \overline{T}_{k-2}(\L)f
\end{equation}

Treating each vector $\overline{T}_k(\L)f$ as a single symbol, this
relation shows that the vector $\overline{T}_k(\L)f$ can be computed
from the vectors $\overline{T}_{k-1}(\L)f$ and
$\overline{T}_{k-2}(\L)f$ with computational cost dominated by a
single matrix-vector multiplication by $\L$.

Many weighted graphs of interest are sparse, i.e. they have a small
number of nonzero edges. Using a sparse matrix representation, the
computational cost of applying $\L$ to a vector is proportional to
$|E|$, the number of nonzero edges in the graph.  The computational
complexity of computing all of the Chebyshev polynomials $T_k(\L)f$
for $k\leq M$ is thus $O(M |E|)$.  The scaling function and wavelet
coefficients at different scales are formed from the same set of
$T_k(\L)f$, but by combining them with different coefficients
$c_{j,k}$. The computation of the Chebyshev polynomials thus need not
be repeated, instead the coefficients for each scale may be computed
by accumulating each term of the form $c_{j,k} T_k(\L)f$ as $T_k(\L)f$
is computed for each $k\leq M$. This requires $O(N)$ operations at
 scale $j$ for each $k\leq M_j$, giving an overall computational
complexity for the fast SGWT of $O(M |E| + N \sum_{j=0}^J M_j )$,
where $J$ is the number of wavelet scales.  In particular, for classes
of graphs where $|E|$ scales linearly with $N$, such as graphs of
bounded maximal degree, the fast SGWT has computational complexity
$O(N)$. Note that if the complexity is dominated by the computation of
the $T_k(\L)f$, there is little benefit to choosing $M_j$ to vary with
$j$.

Applying the recurrence (\ref{eq:cheby-f-recurrence}) requires memory
of size $3N$.  The total memory requirement for a straightforward
implementation of the fast SGWT would then be $N(J+1)+3N$.


\subsection{Fast computation of Adjoint}

Given a fixed set of wavelet scales $\{t_j\}_{j=1}^J$, and including
the scaling functions $\phi_n$, one may consider the overall wavelet
transform as a linear map $W:\R^N \to \R^{N(J+1)}$ defined by
$Wf=\left( (T_hf)^T,(T_g^{t_1}f)^T,\cdots,(T_g^{t_J}f)^T\right)^T$.
Let $\tilde{W}$ be the corresponding approximate wavelet transform
defined by using the fast SGWT approximation, i.e.
$\tilde{W}f=\left((p_0(\L)f)^T,(p_1(\L)f)^T,\cdots,(p_J(\L)f)^T\right)^T$.
We show that both the adjoint $\tilde{W}^*:\R^{N(J+1)}\to \R^N$, and
the composition $W^*W:R^N\to R^N$ can be computed efficiently using
Chebyshev polynomial approximation. This is important as several
methods for inverting the wavelet transform or using the spectral
graph wavelets for regularization can be formulated using the adjoint
operator, as we shall see in detail later in Section \ref{sec:rec}.

For any $\eta\in \R^{N(J+1)}$, we consider $\eta$ as the concatenation
$\eta=(\eta_0^T,\eta_1^T,\cdots,\eta_J^T)^T$ with each $\eta_j \in
\R^N$ for $0 \leq j \leq J$. Each $\eta_j$ for $j\geq 1$ may be thought of as
a subband corresponding to the scale $t_j$, with $\eta_0$ representing
the scaling function coefficients. We then have
\begin{align} \label{eq:adjoint}
\ip{\eta}{Wf}_{N(J+1)} &= \ip{\eta_0}{T_h f} +
\sum_{j=1}^J \ip{\eta_j}{T_g^{t_j} f}_N \nt \\
& = \ip{T_h^* \eta_0}{f} + 
\left<\sum_{j=1}^J (T_g^{t_j})^*\eta_j,f\right>_N 
 = \left<T_h \eta_0+ \sum_{j=1}^J T_g^{t_j} \eta_j,f\right>_N
\end{align}
as $T_h$ and each $T_g^{t_j}$ are self adjoint.  As (\ref{eq:adjoint})
holds for all $f\in \R^N$, it follows that $W^*\eta = T_h \eta_0
+\sum_{j=1}^J T_g^{t_j} \eta_n$, i.e. the adjoint is given by
re-applying the corresponding wavelet or scaling function operator on
each subband, and summing over all scales.

This can be computed using the same fast Chebyshev polynomial
approximation scheme in equation (\ref{eq:fast-sgwt}) as for the
forward transform, e.g. as $\tilde{W}^*\eta = \sum_{j=0}^J p_j(\L) \eta_j$.
Note that this scheme computes the {\em exact} adjoint of the
approximate forward transform, as may be verified by replacing
$T_h$ by $p_0(\L)$ and $T_g^{t_j}$ by $p_j(\L)$ in (\ref{eq:adjoint}).

We may also develop a polynomial scheme for computing $\tilde{W}^*
\tilde{W}$. Naively computing this by first applying $\tilde{W}$, then
$\tilde{W}^*$ by the fast SGWT would involve computing $2J$ Chebyshev
polynomial expansions. By precomputing the addition of squares of the
approximating polynomials, this may be reduced to application of a
single Chebyshev polynomial with twice the degree, reducing the
computational cost by a factor $J$.  Note first that
\begin{align}
  {\tilde{W}}^* \tilde{W} f = \sum_{j=0}^J p_j(\L) \left(p_j(\L) f \right) 
  =\left( \sum_{j=0}^J (p_j(\L))^2 \right)f
\end{align}
Set $P(x)=\sum_{j=0}^J (p_j(x))^2$, which has degree
$M^*=2\max\{M_j\}$.  We seek to express $P$ in the shifted Chebyshev
basis as $P(x)=\frac{1}{2}d_0 + \sum_{k=1}^{M^*} d_k
\overline{T}_k(x)$. The Chebyshev polynomials satisfy the product
formula 
\begin{equation}\label{eq:cheby_product}
T_k(x) T_l(x) = \frac{1}{2}\left(T_{k+l}(x) + T_{|k-l|}(x) \right)
\end{equation}
which we will use to compute the Chebyshev coefficients $d_k$ in terms
of the Chebyshev coefficients $c_{j,k}$ for the individual $p_j$'s.

Expressing this explicitly is slightly complicated by the convention
that the $k=0$ Chebyshev coefficient is divided by $2$ in the
Chebyshev expansion (\ref{eq:gtn-cheby-expansion}). For convenience in
the following, set $c'_{j,k}=c_{j,k}$ for $k \geq 1$ and
$c'_{j,0}=\frac{1}{2}c_{j,0}$, so that $p_j(x) = \sum_{k=0}^{M_n}
c'_{j,k} \overline{T}_k(x)$.
Writing $(p_j(x))^2 = \sum_{k=0}^{2*M_n}d'_{j,k} \overline{T}_k(x)$,
and applying (\ref{eq:cheby_product}), we compute
\begin{equation}
d'_{j,k} = 
\begin{cases}
\frac{1}{2}\left( {c'_{j,0}}^2 + \sum_{i=0}^{M_n} {c'_{j,i}}^2\right) &
\mbox{ if $k=0$ } \\
\frac{1}{2}\left( 
\sum_{i=0}^k c'_{j,i} c'_{j,k-i} +
\sum_{i=0}^{M_j-k} c'_{j,i} c'_{j,k+i} +
\sum_{i=k}^{M_j} c'_{j,i} c'_{j,i-k}
\right) & \mbox{ if $0<k\leq M_j$ } \\
\frac{1}{2}\left( 
\sum_{i=k-M_j}^{M_j} c'_{j,i} c'_{j,k-i} 
\right) & \mbox { if $M_j < k \leq 2M_j$}
\end{cases}
\end{equation}

Finally, setting $d_{n,0} = 2d'_{j,0}$ and $d_{j,k}=d'_{j,k}$ for
$k\geq 1$, and setting $d_k = \sum_{j=0}^J d_{j,k}$ gives the
Chebyshev coefficients for $P(x)$. We may then compute
\begin{equation} \label{eq:fast-cheby-frameop}
  \tilde{W}^* \tilde{W} f = P(\L) f 
= \frac{1}{2} d_0 f + \sum_{k=1}^{M^*} d_k \overline{T}_k(\L) f
\end{equation}
following (\ref{eq:fast-sgwt}).

\section{Reconstruction}
\label{sec:rec}

For most interesting signal processing applications, merely
calculating the wavelet coefficients is not sufficient. A wide class
of signal processing applications are based on manipulating the
coefficients of a signal in a certain transform, and later inverting
the transform. For the SGWT to be useful for more than simply signal
analysis, it is important to be able to recover a signal
corresponding to a given set of coefficients.

The SGWT is an overcomplete transform as there are more wavelets
$\psi_{{t_j},n}$ than original vertices of the graph. Including the
scaling functions $\scalf_n$ in the wavelet frame, the SGWT maps an input
vector $f$ of size $N$ to the $N(J+1)$ coefficients $c=Wf$. As is well
known, this means that $W$ will have an infinite number of
left-inverses $M$ s.t. $MWf=f$. A natural choice among the possible
inverses is to use the pseudoinverse $L=(W^*W)^{-1} W^*$. The pseudoinverse 
satisfies the minimum-norm property
\begin{equation}
Lc =\argmin_{f\in \R^N} \norm{c-Wf}_2
\end{equation}
For applications which involve manipulation of the wavelet
coefficients, it is very likely to need to apply the inverse to a a
set of coefficients which no longer lie directly in the image of
$W$. The above property indicates that, in this case, the
pseudoinverse corresponds to orthogonal projection onto the image of
$W$, followed by inversion on the image of $W$.

Given a set of coefficients $c$, the pseudoinverse will be given by
solving the square matrix equation $(W^*W) f = W^*c$. This system is
too large to invert directly. Solving it may be performed using any of
a number of iterative methods, including the classical frame algorithm
\cite{Daubechies1992}, and the faster conjugate gradients method
\cite{Grochenig1993}. These methods have the property that each step
of the computation is dominated by application of $W^*W$ to a single
vector. We use the conjugate gradients method, employing the fast
polynomial approximation (\ref{eq:fast-cheby-frameop}) for computing
application of $\tilde{W}^*\tilde{W}$.

\section{Implementation and examples}
\label{sec:examp}

In this section we first give the explicit details of the wavelet and
scaling function kernels used, and how we select the scales. We then
show examples of the spectral graph wavelets on several different real
and synthetic data sets.

\subsection{SGWT design details} \label{sec:sgwt_design} 

Our choice for the wavelet generating kernel $g$ is motivated by the
desire to achieve localization in the limit of fine scales. According
to Theorem \ref{theorem:localization}, localization can be ensured if
$g$ behaves as a monic power of $x$ near the origin. We choose $g$ to
be exactly a monic power near the origin, and to have power law decay
for large $x$. In between, we set $g$ to be a cubic spline such that
$g$ and $g'$ are continuous. Our $g$ is parametrized by the integers
$\alpha$ and $\beta$, and $x_1$ and $x_2$ determining the transition
regions : 
\begin{equation} \label{eq:kernel_spec}
  g(x;\alpha,\beta,x_1,x_2)=
\begin{cases}
  x_1^{-\alpha} x^\alpha & \mbox{ for $x < x_1$} \\
  s(x) & \mbox{ for $x_1 \leq x \leq x_2$}\\
  x_2^{\beta}x^{-\beta} & \mbox{ for $x>x_2$ }    \\
\end{cases}
\end{equation}
Note that $g$ is normalized such that $g(x_1)=g(x_2)=1$.  The
coefficients of the cubic polynomial $s(x)$ are determined by the
continuity constraints $s(x_1)=s(x_2)=1$ , $s'(x_1) = \alpha/x_1$ and
$s'(x_2)=-\beta/x_2$. All of the examples in this paper were produced
using $\alpha=\beta=1$, $x_1=1$ and $x_2=2$; in this case
$s(x)=-5+11x-6x^2+x^3$.

The wavelet scales $t_j$ are selected to be logarithmically equispaced
between the minimum and maximum scales $t_J$ and $t_1$. These are
themselves adapted to the upper bound $\lambda_{max}$ of the spectrum
of $\L$. The placement of the maximum scale $t_1$ as well as the
scaling function kernel $h$ will be determined by the selection of
$\lambda_{min}=\lambda_{max}/K$, where $K$ is a design parameter of
the transform. We then set $t_1$ so that $g(t_1 x)$ has power-law
decay for $x>\lambda_{min}$, and set $t_J$ so that $g(t_J x)$ has
monic polynomial behaviour for $x<\lambda_{max}$. This is achieved by
$t_1 = x_2/\lambda_{min}$ and $t_J=x_2/\lambda_{max}$.

For the scaling function kernel we take $h(x)=\gamma
\exp(-(\frac{x}{0.6\lambda_{min}})^4)$, where $\gamma$ is set such
that $h(0)$ has the same value as the maximum value of $g$. 

This set of scaling function and wavelet generating kernels, for
parameters $\lambda_{max}=10$, $K=20$, $\alpha=\beta=2$, $x_1=1$,
$x_2=2$, and $J=4$, are shown in Figure \ref{fig:spectral_design}.

\subsection{Illustrative examples : spectral graph wavelet gallery}

\begin{figure} [t]
\begin{tabular}{c@{}c@{}c}
\includegraphics[width=.32\columnwidth]{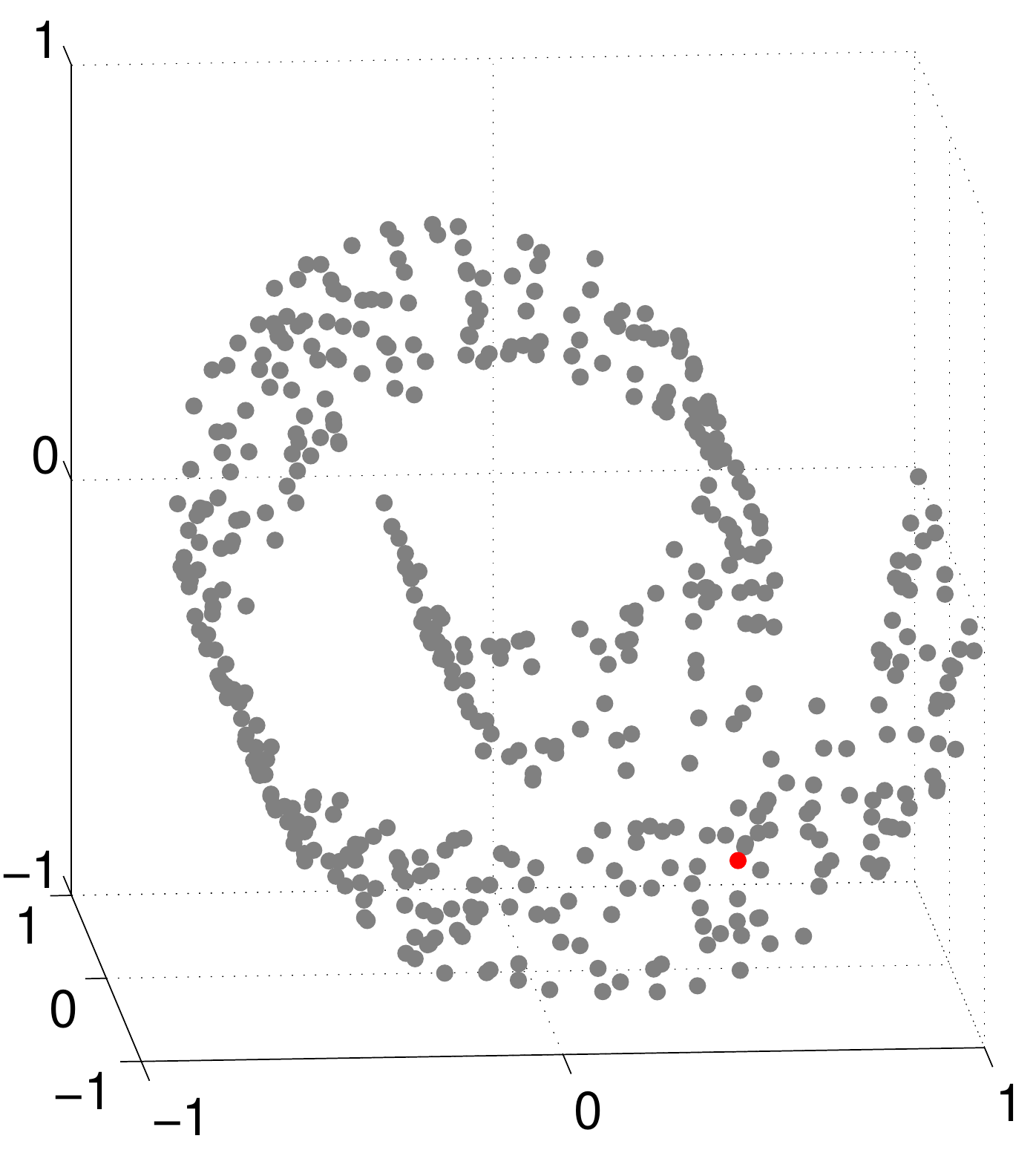} & 
\includegraphics[width=.32\columnwidth]{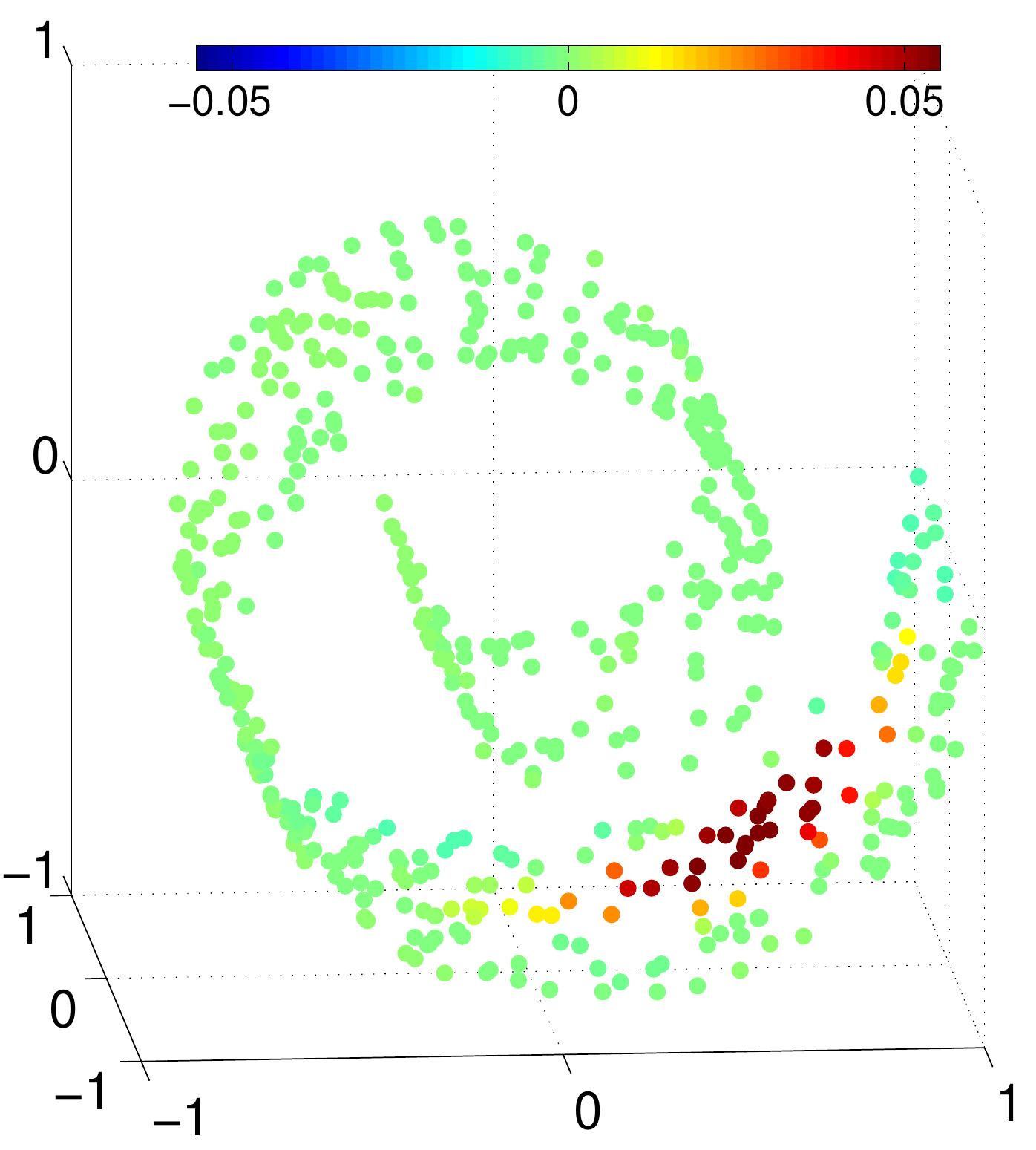} &
\includegraphics[width=.32\columnwidth]{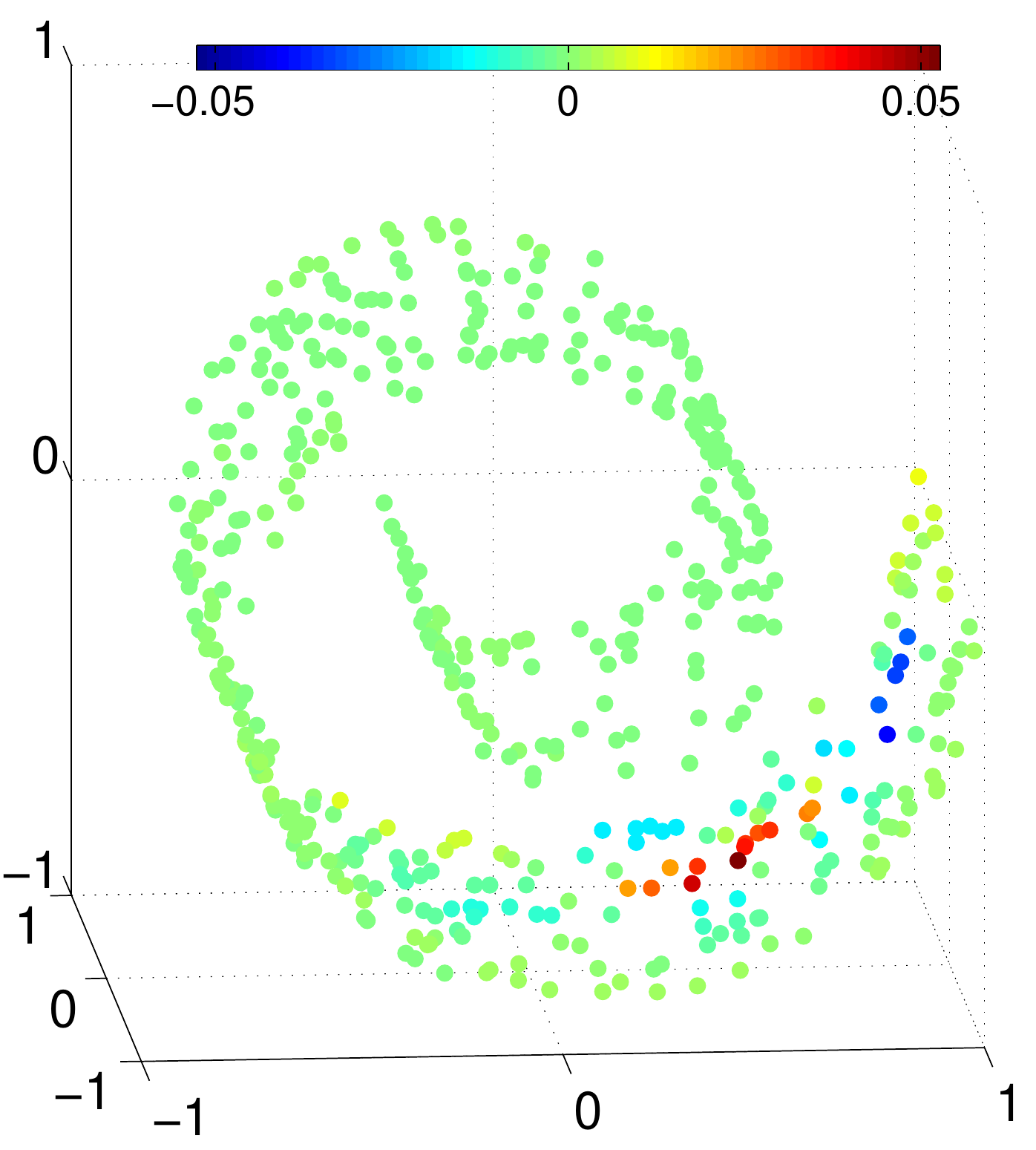} \\
(a) & (b) & (c) \\
\includegraphics[width=.32\columnwidth]{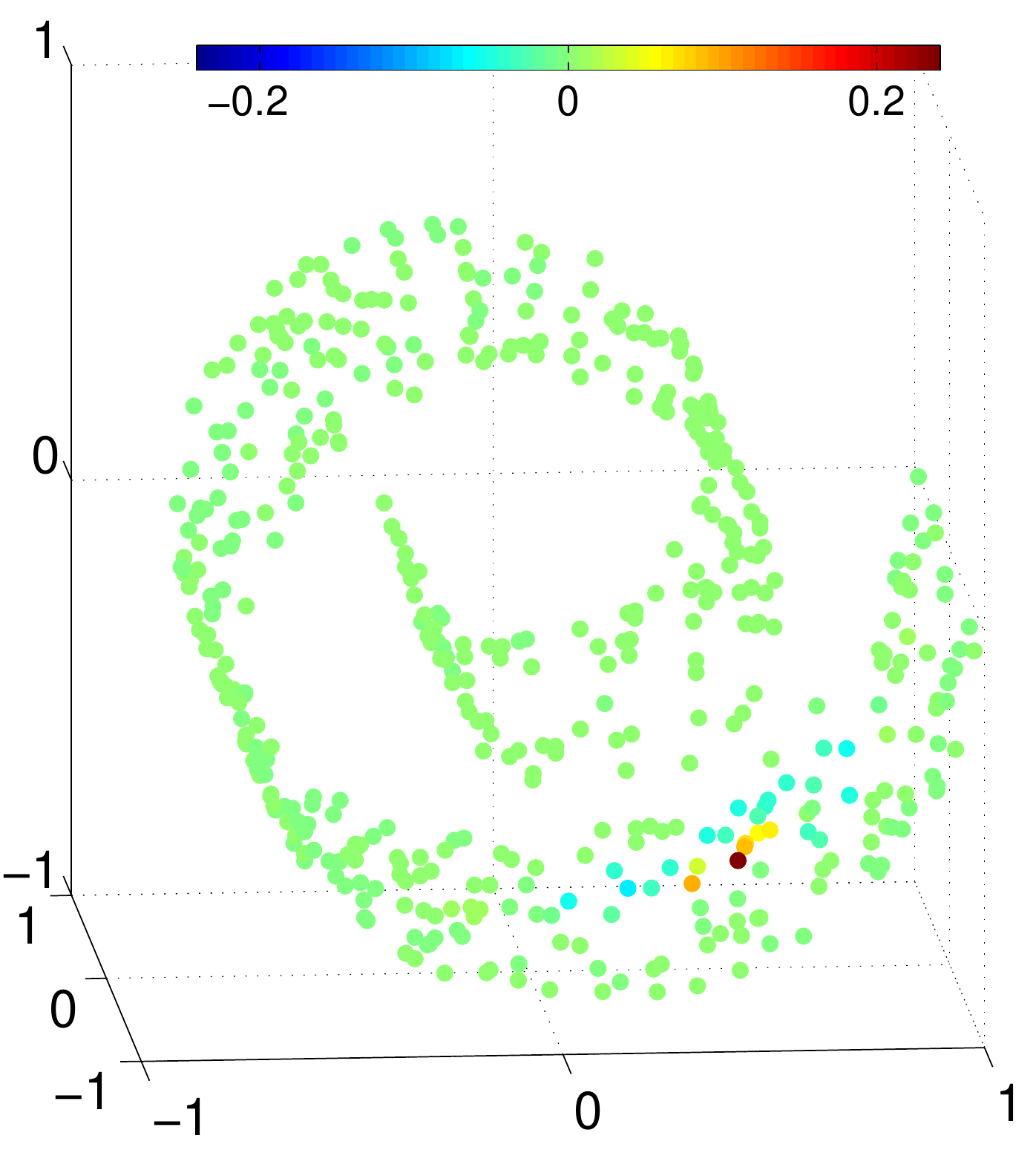} & 
\includegraphics[width=.32\columnwidth]{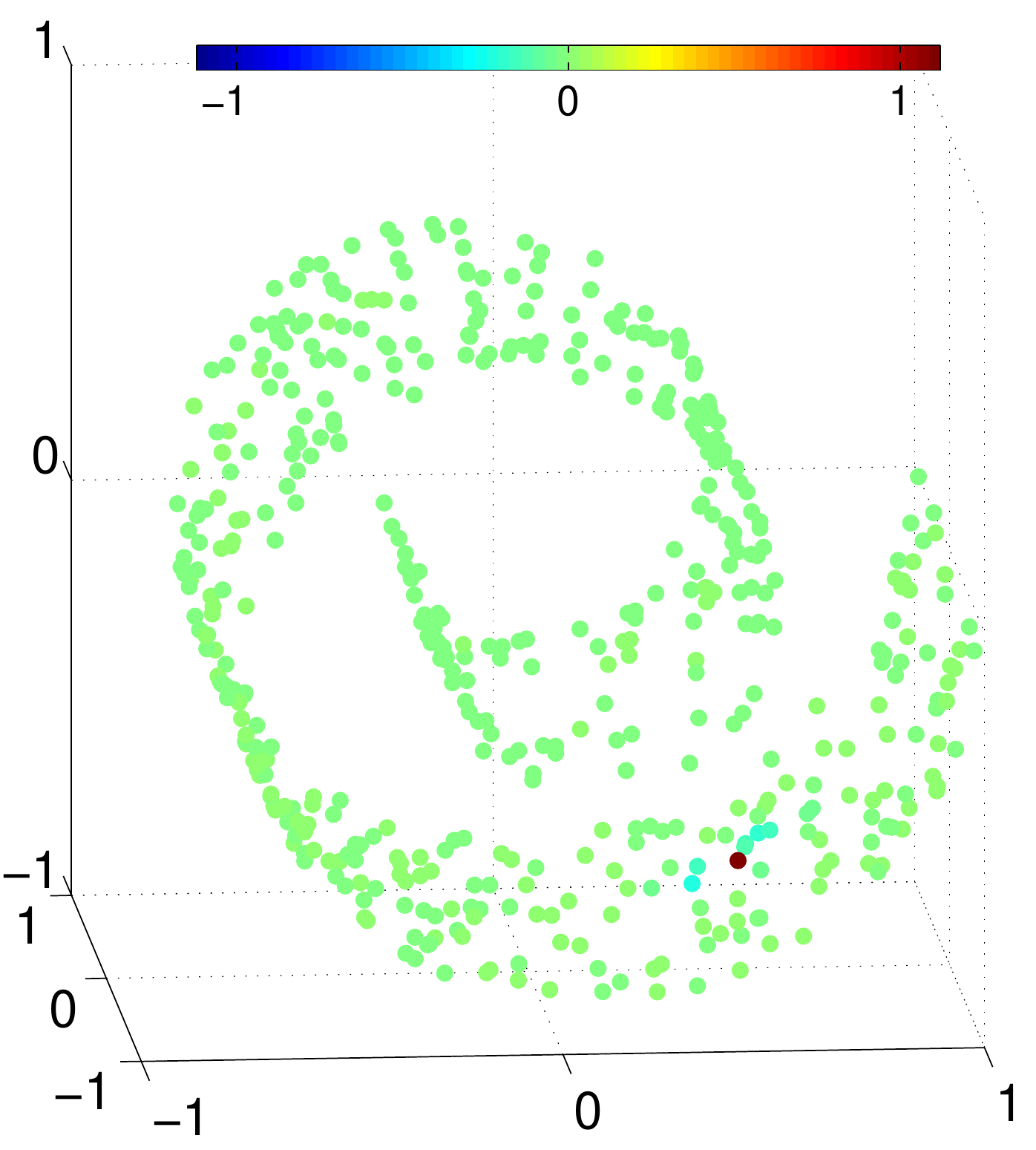} &
\includegraphics[width=.32\columnwidth]{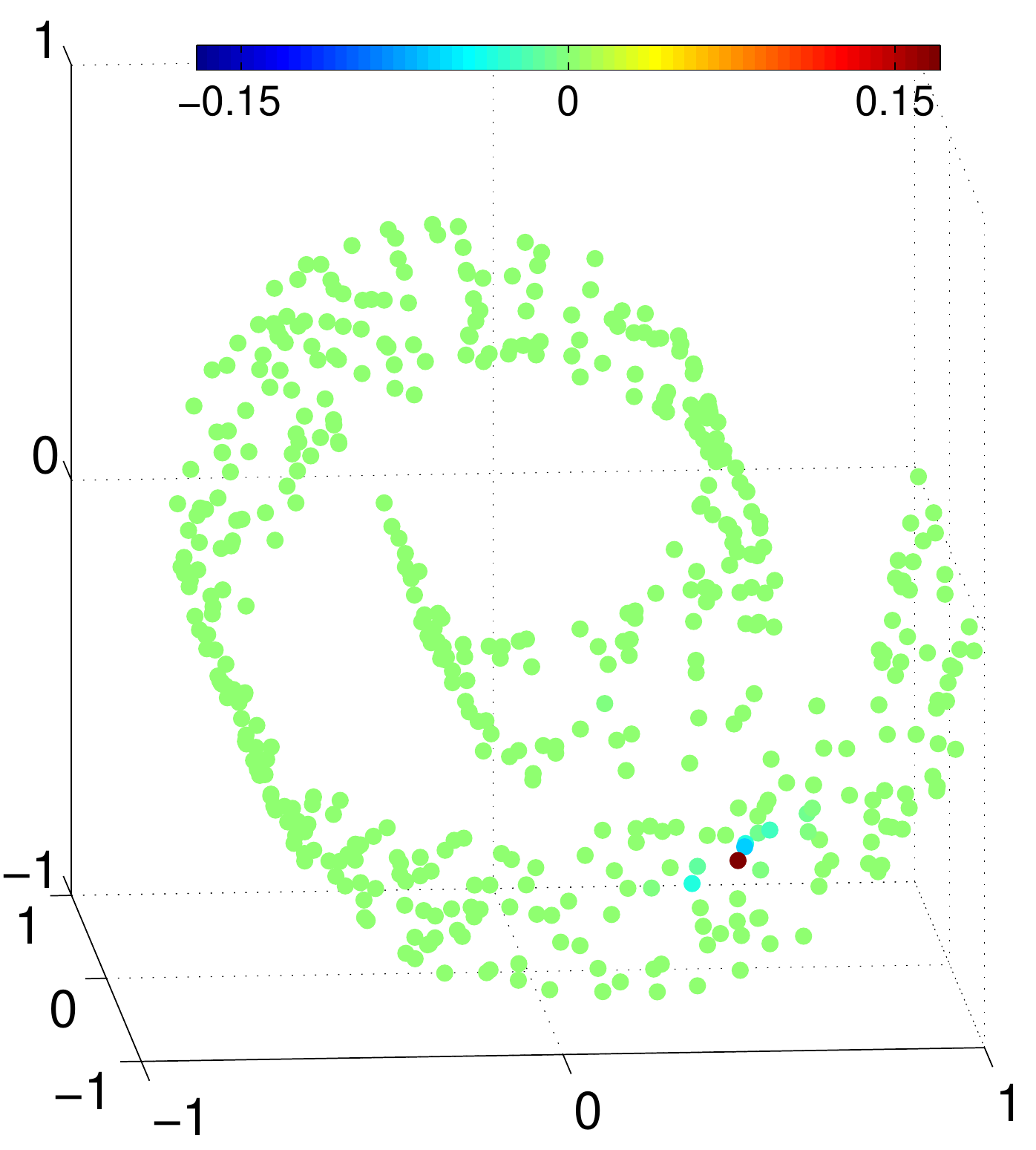} \\
(d) & (e) & (f) \\
\end{tabular}
\caption{Spectral graph wavelets on Swiss Roll data cloud, with $J=4$
  wavelet scales.  (a) vertex at which wavelets are centered (b)
  scaling function (c)-(f) wavelets, scales 1-4.}
\label{fig:swissroll}
\end{figure}

As a first example of building wavelets in a point cloud domain, we
consider the spectral graph wavelets constructed on the ``Swiss
roll''.  This example data set consists of points randomly sampled on
a 2-d manifold that is embedded in $\R^3$. The manifold is described
parametrically by $ \vec{x}(s,t) =(t\cos(t)/4\pi,s,t\sin(t)/4\pi)$ for
$-1\leq s \leq 1$, $\pi\leq t \leq 4\pi$. For our example we take 500
points sampled uniformly on the manifold.

Given a collection $x_i$ of points, we build a weighted graph by
setting edge weights $w_{i,j} =
\exp(-\norm{x_j-x_j}^2/2\sigma^2)$. For larger data sets this graph
could be sparsified by thresholding the edge weights, however we do
not perform this here. In Figure \ref{fig:swissroll} we show the Swiss
roll data set, and the spectral graph wavelets at four different
scales localized at the same location. We used $\sigma=0.1$ for
computing the underlying weighted graph, and $J=4$ scales with $K=20$
for computing the spectral graph wavelets.  In many examples relevant
for machine learning, data are given in a high dimensional space that
intrinsically lie on some underlying lower dimensional manifold. This
figure shows how the spectral graph wavelets can implicitly adapt to
the underlying manifold structure of the data, in particular notice
that the support of the coarse scale wavelets diffuse locally along
the manifold and do not ``jump'' to the upper portion of the roll.

\begin{figure}[t]
\begin{tabular}{c@{}c@{}c}
\includegraphics[width=.32\columnwidth]{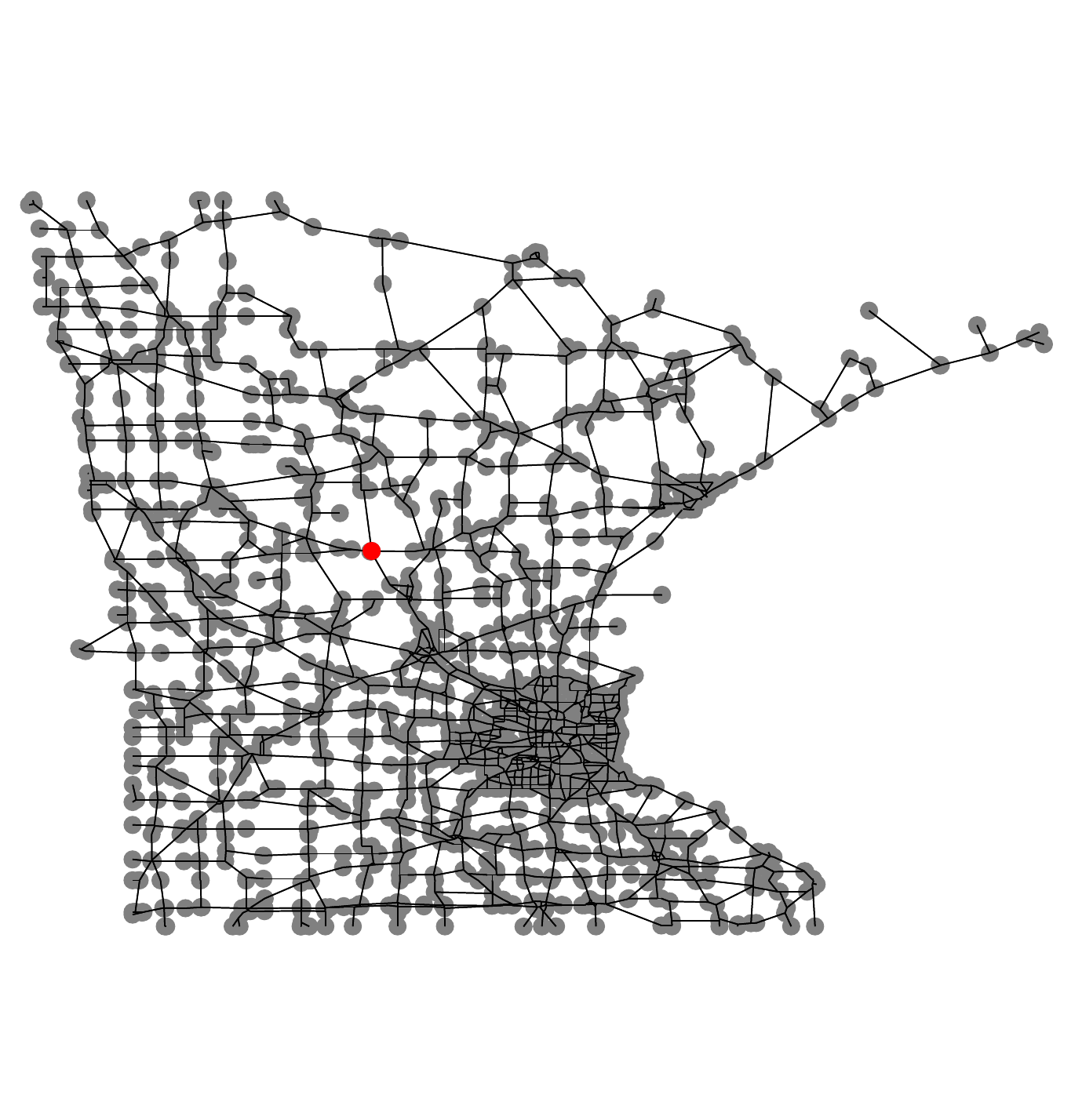} & 
\includegraphics[width=.32\columnwidth]{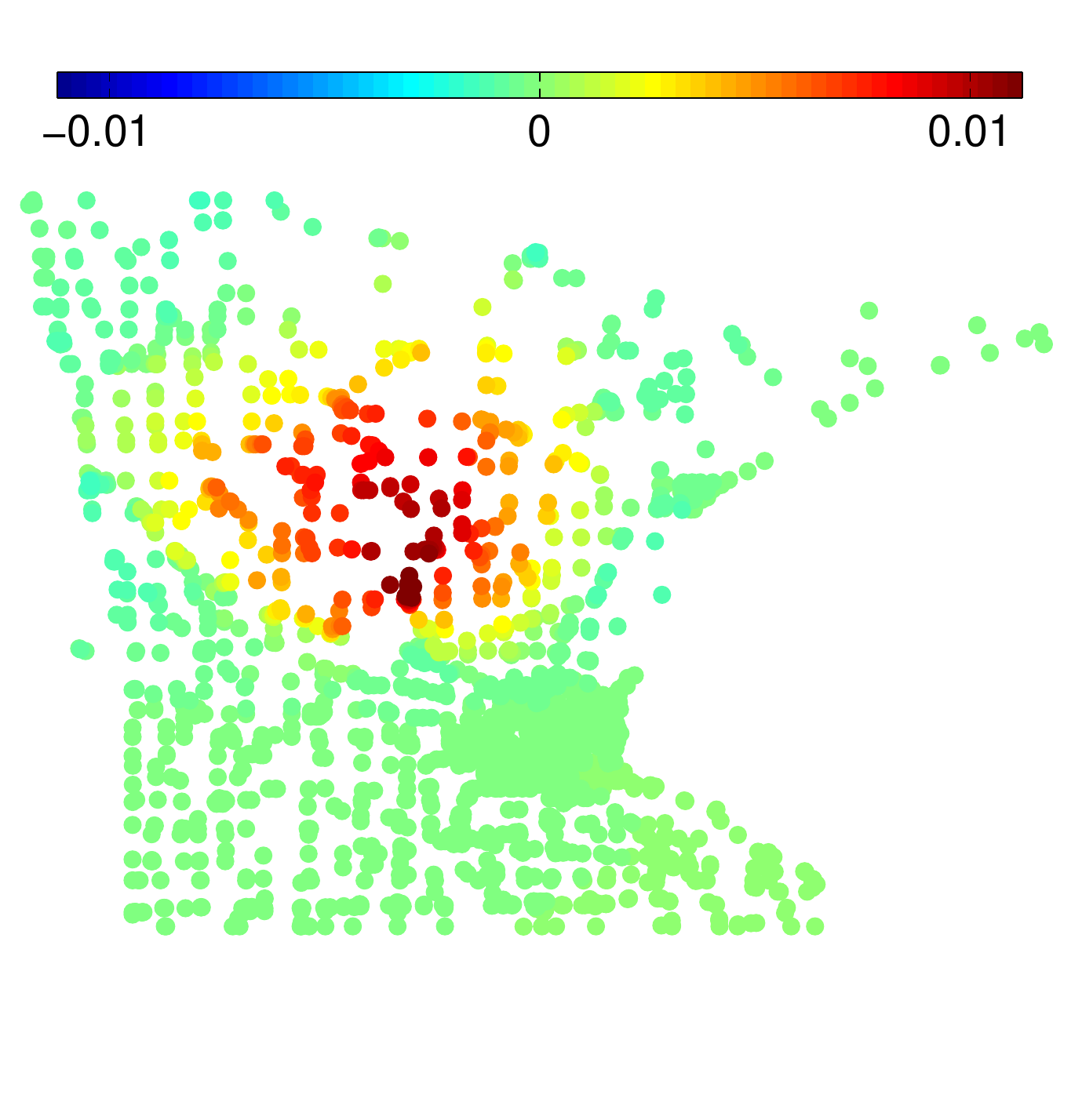} &
\includegraphics[width=.32\columnwidth]{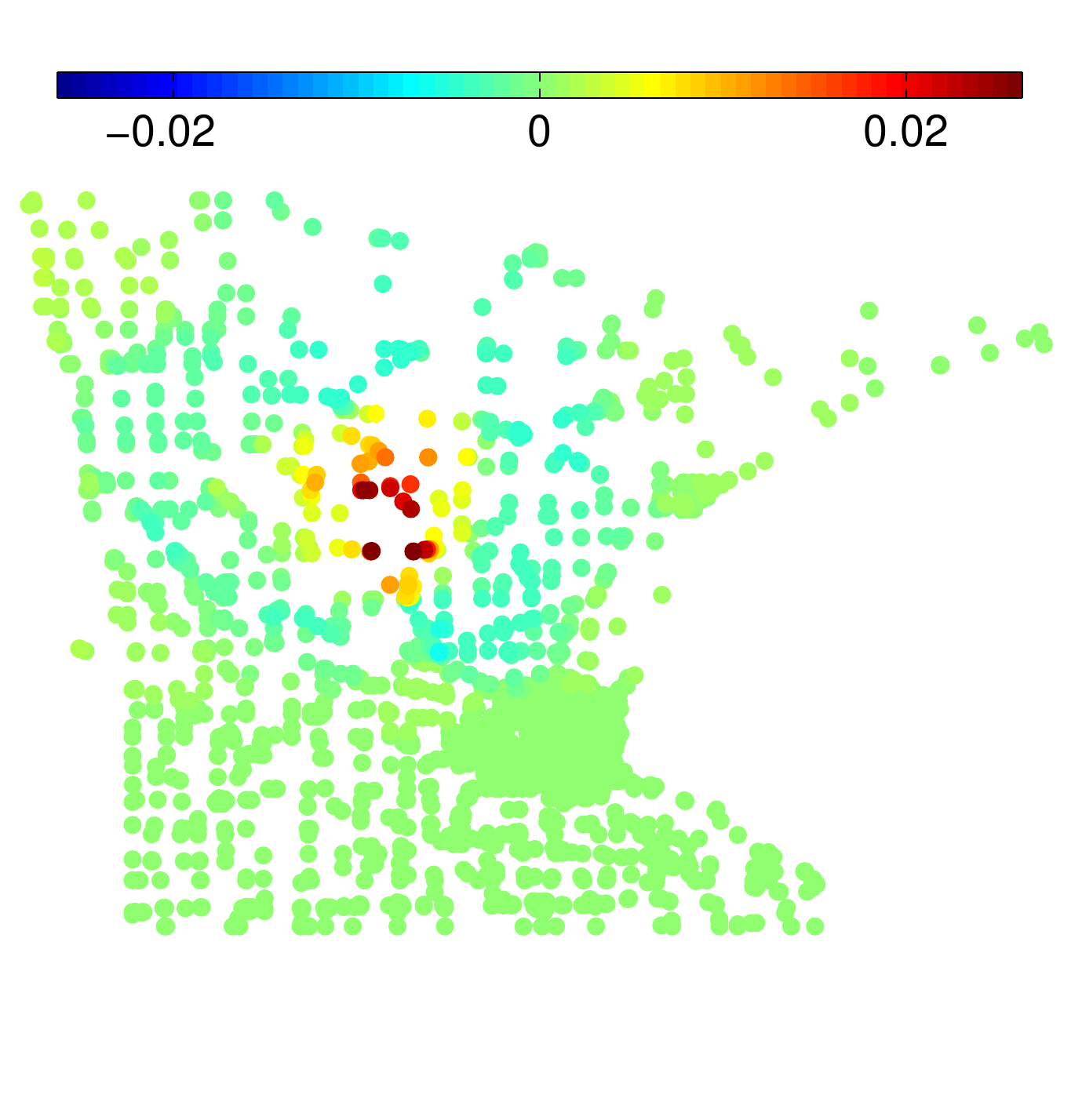} \\
(a) & (b) & (c) \\
\includegraphics[width=.32\columnwidth]{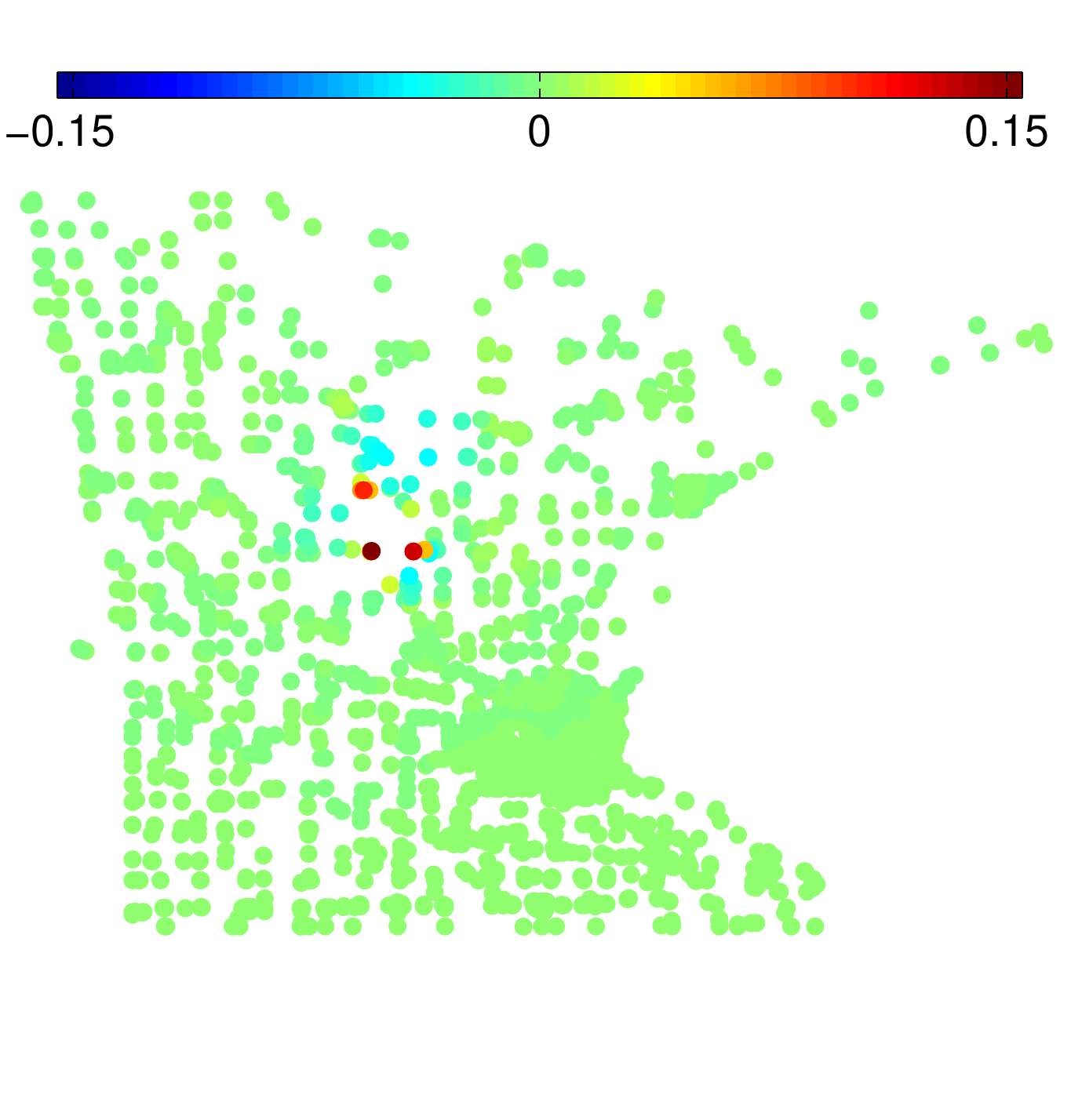} & 
\includegraphics[width=.32\columnwidth]{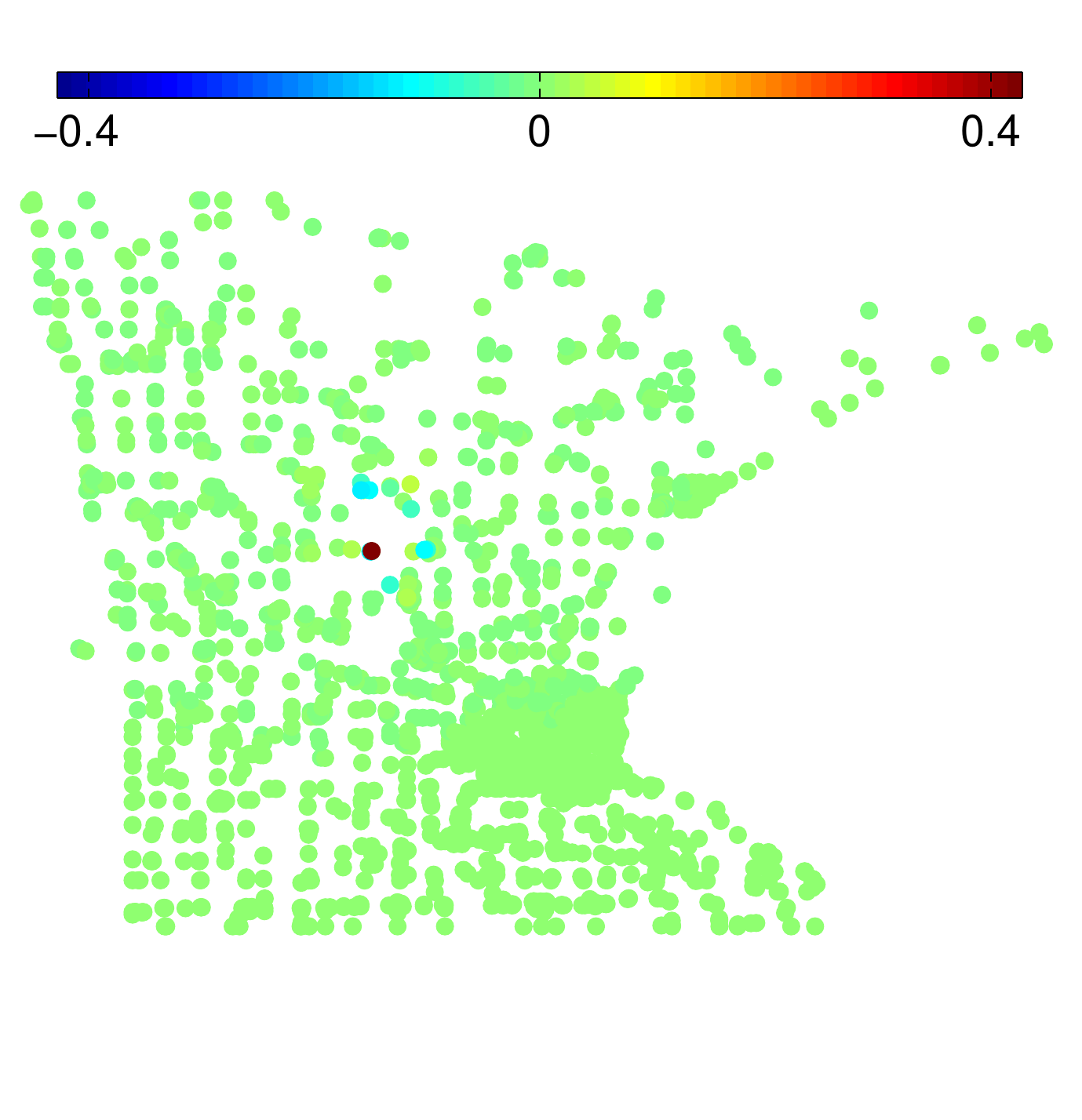} &
\includegraphics[width=.32\columnwidth]{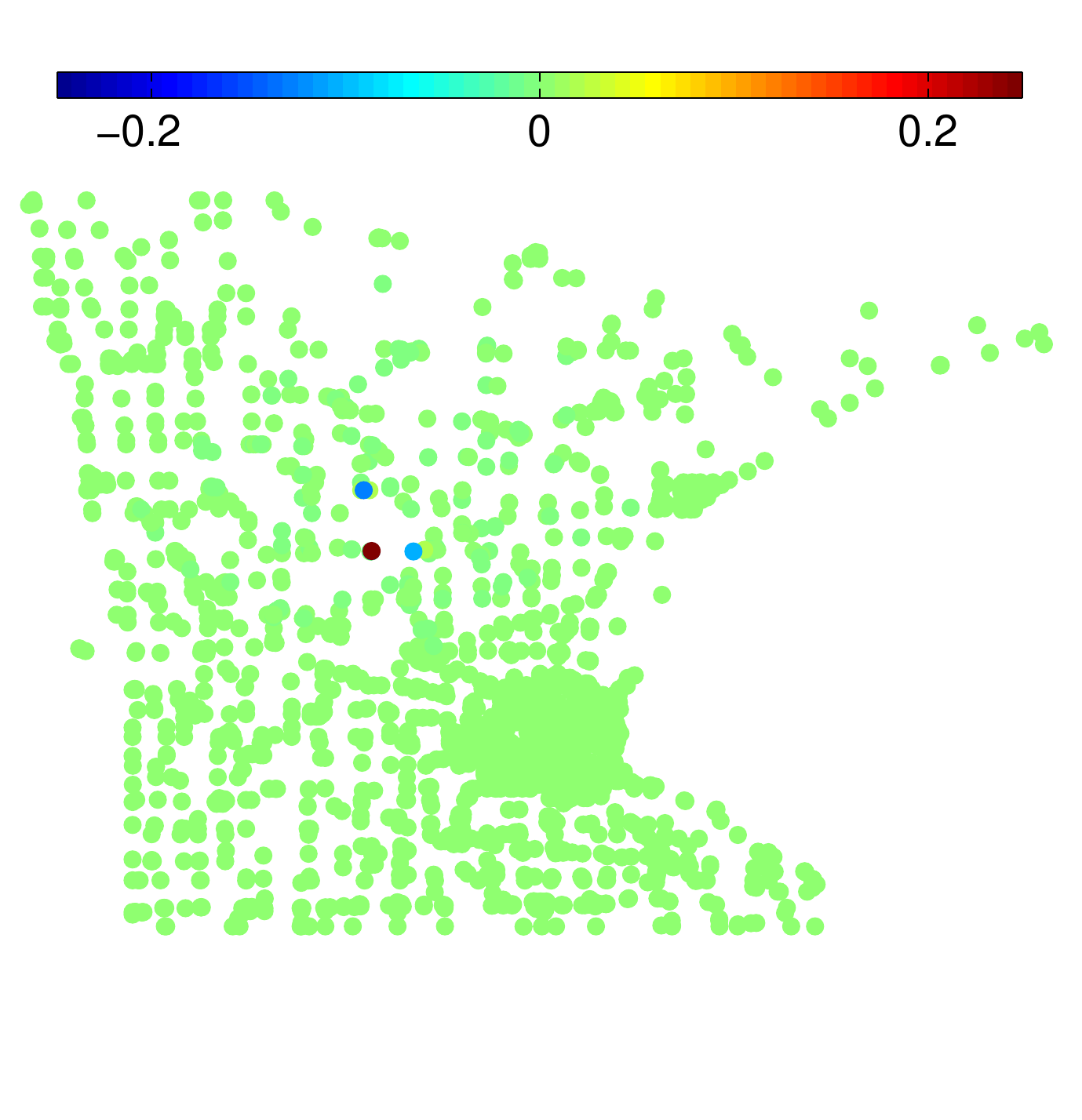} \\
(d) & (e) & (f) \\
\end{tabular}
\caption{Spectral graph wavelets on Minnesota road graph, with
  $K=100$, $J=4$ scales. (a) vertex at which wavelets are centered (b)
  scaling function (c)-(f) wavelets, scales 1-4.} \label{fig:minnesota}
\end{figure}

A second example is provided by a transportation network. In Figure
\ref{fig:minnesota} we consider a graph describing the road network
for Minnesota. In this dataset, edges represent major roads and
vertices their intersection points, which often but not always
correspond to towns or cities. For this example the graph is
unweighted, i.e. the edge weights are all equal to unity independent
of the physical length of the road segment represented. In particular,
the spatial coordinates of each vertex are used only for displaying
the graph and the corresponding wavelets, but do not affect the edge
weights. We show wavelets constructed with $K=100$ and $J=4$ scales.

Graph wavelets on transportation networks could prove useful for
analyzing data measured at geographical locations where one would
expect the underlying phenomena to be influenced by movement of people
or goods along the transportation infrastructure. Possible example
applications of this type include analysis of epidemiological data
describing the spread of disease, analysis of inventory of trade goods
(e.g. gasoline or grain stocks) relevant for logistics problems, or
analysis of census data describing human migration patterns.

\begin{figure}[t]
\begin{tabular}{c@{}c@{}c}
\includegraphics[width=.32\columnwidth]{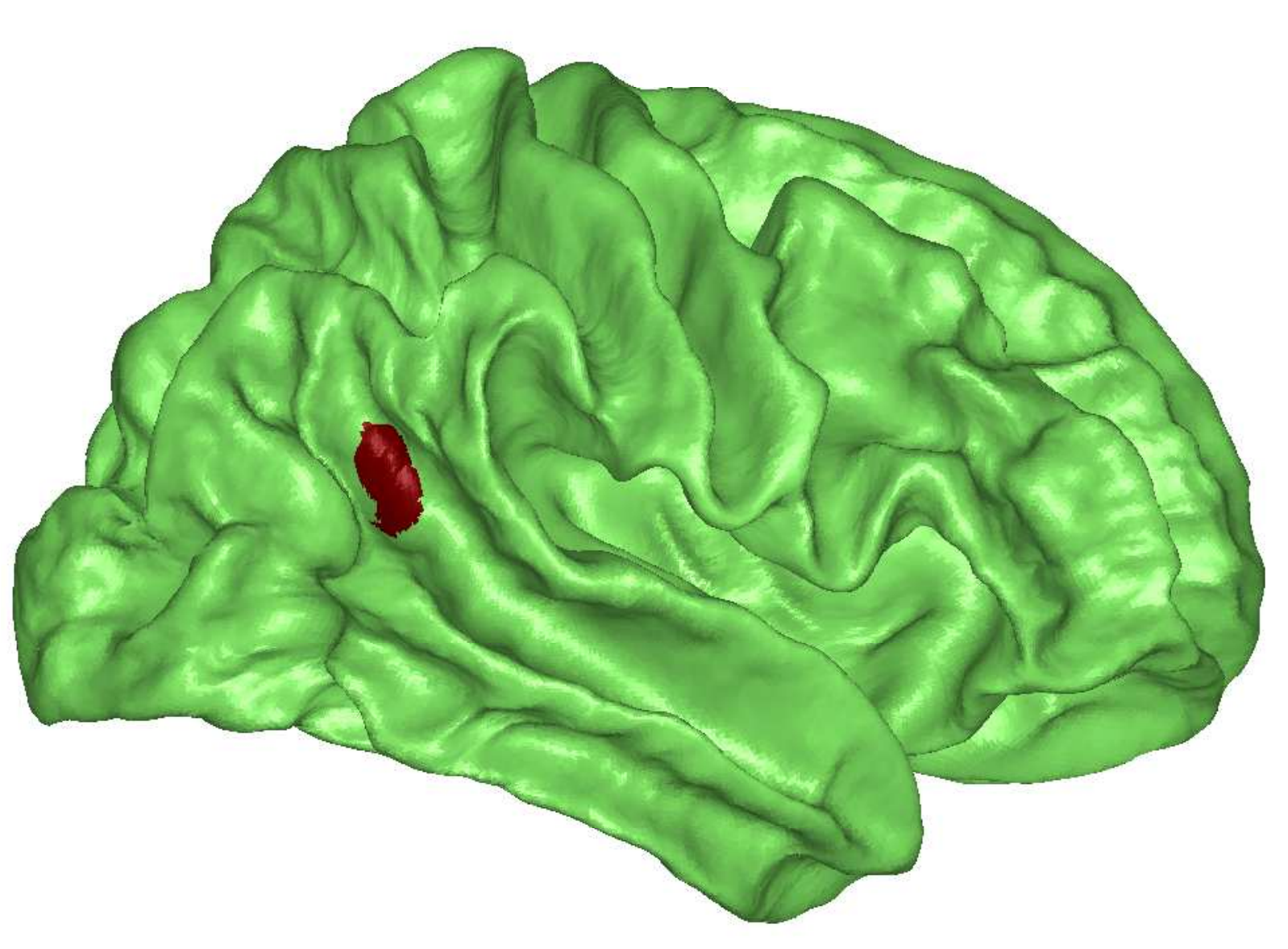} & 
\includegraphics[width=.32\columnwidth]{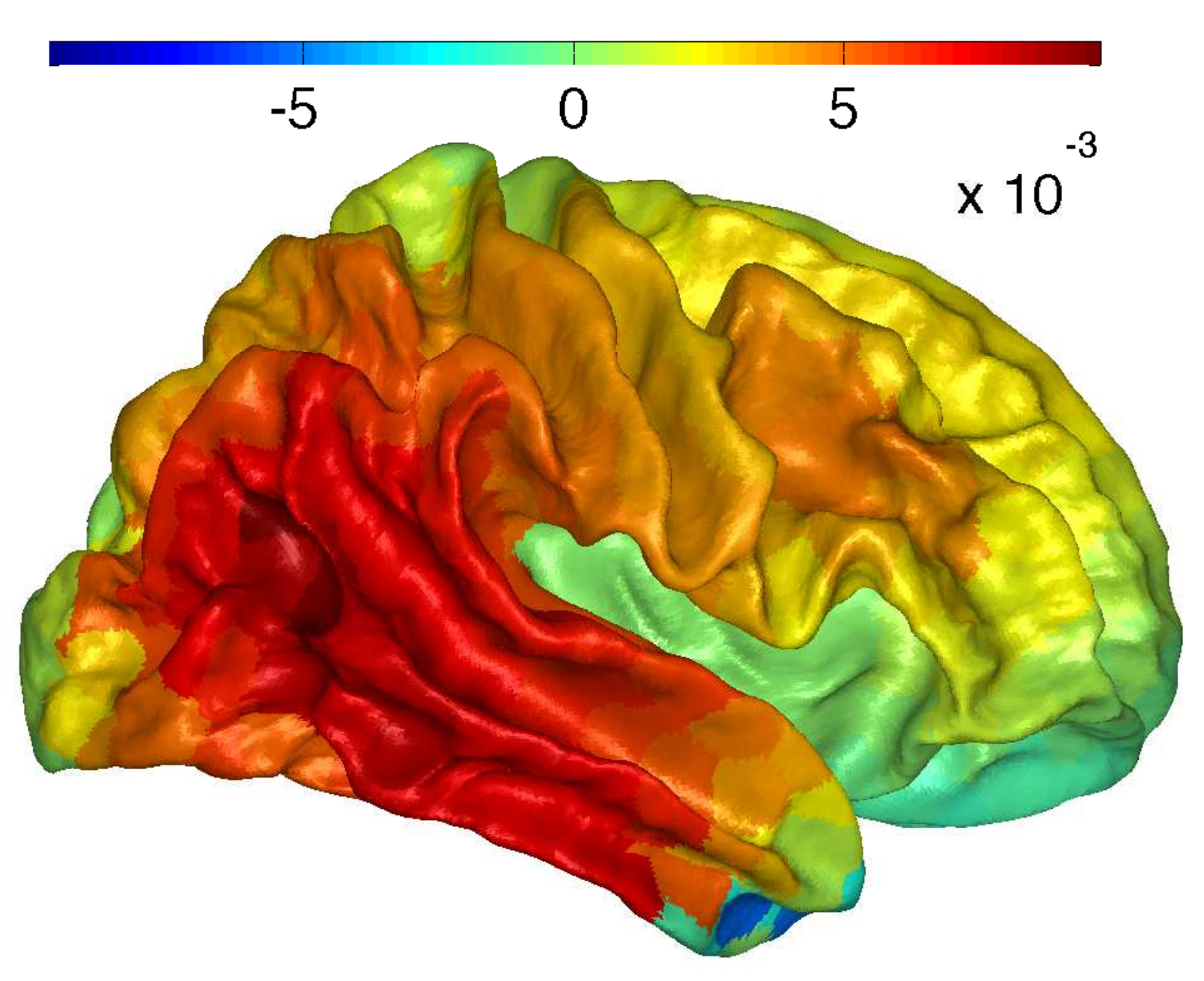} &
\includegraphics[width=.32\columnwidth]{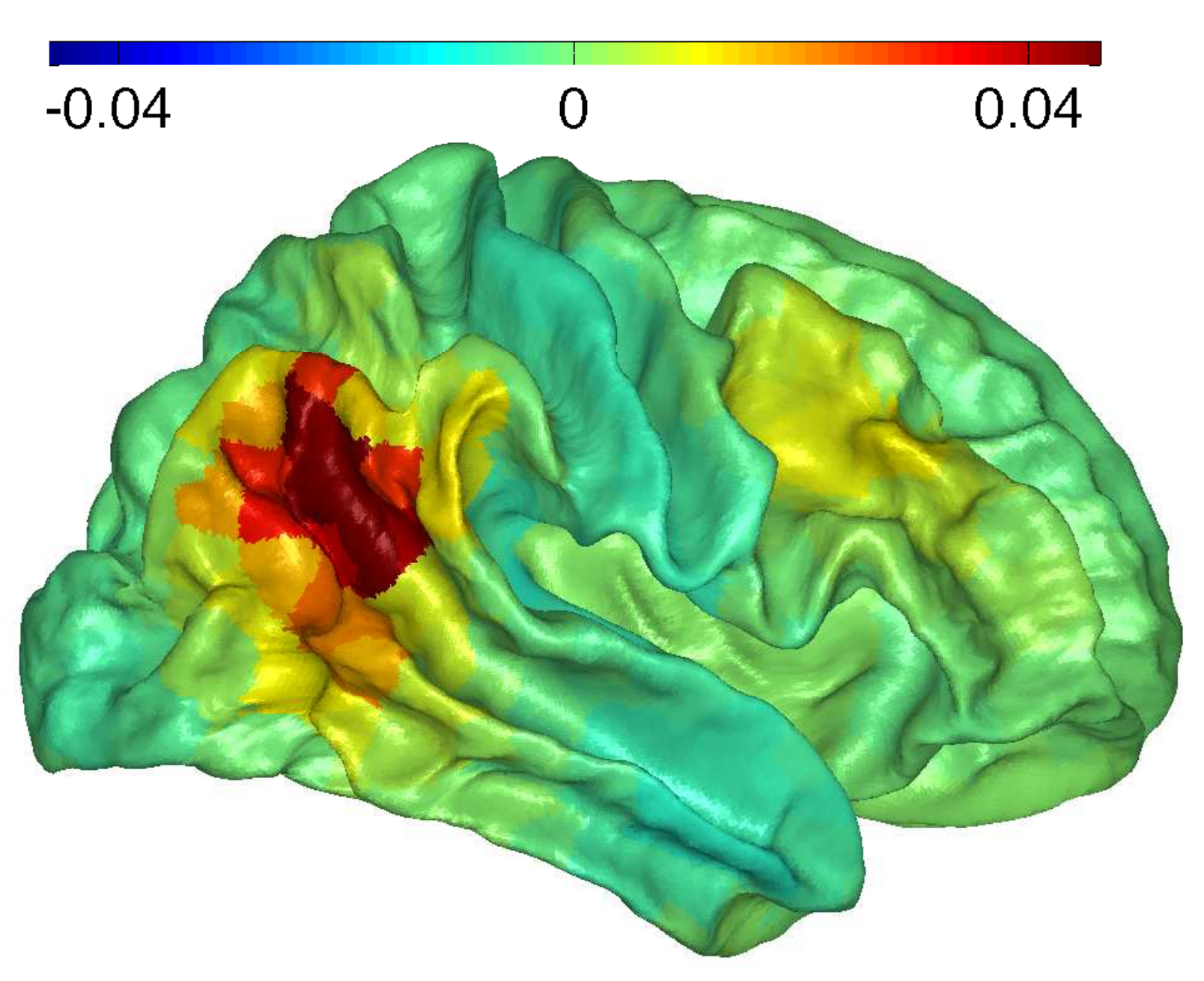} \\
(a) & (b) & (c) \\
\includegraphics[width=.32\columnwidth]{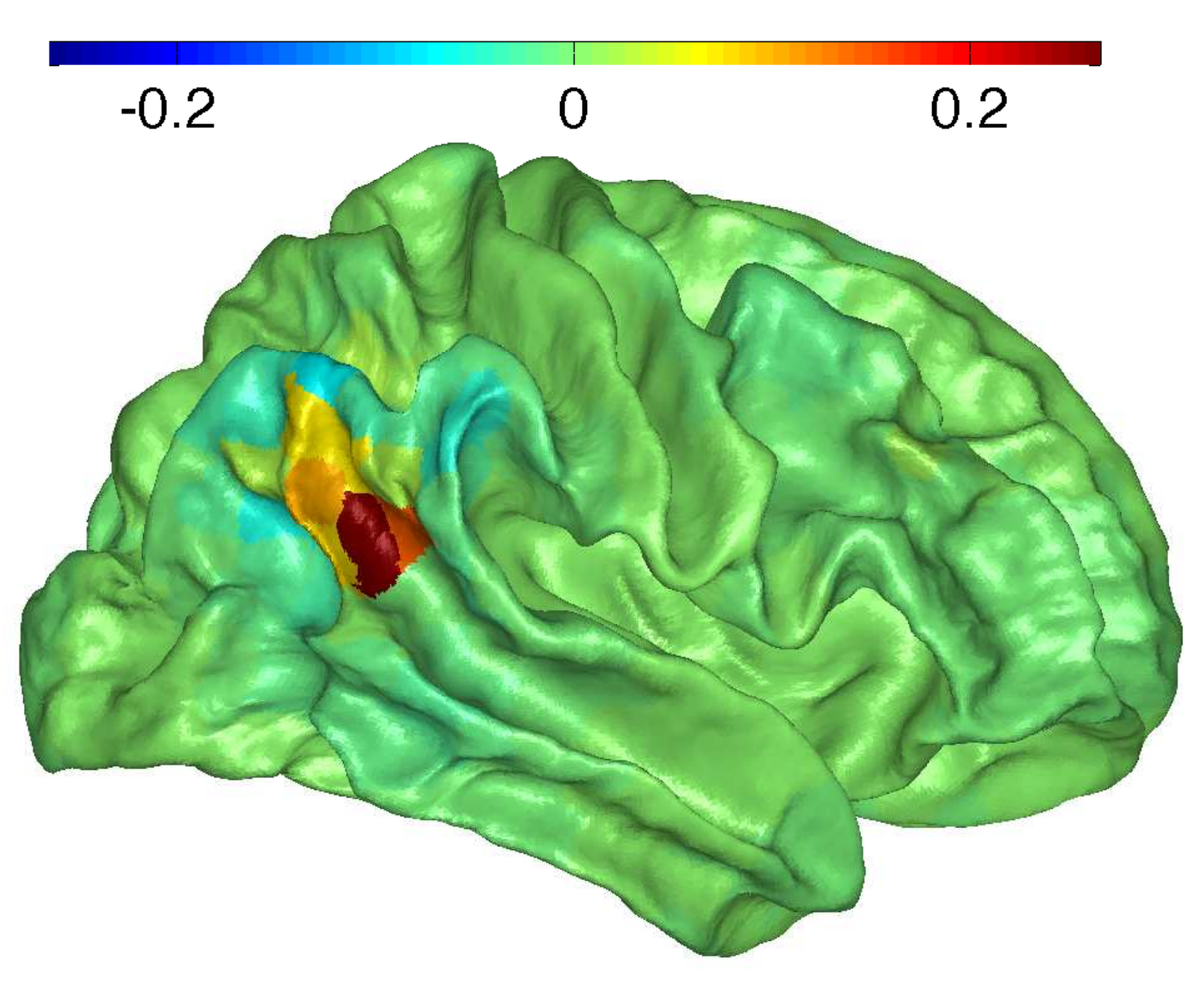} & 
\includegraphics[width=.32\columnwidth]{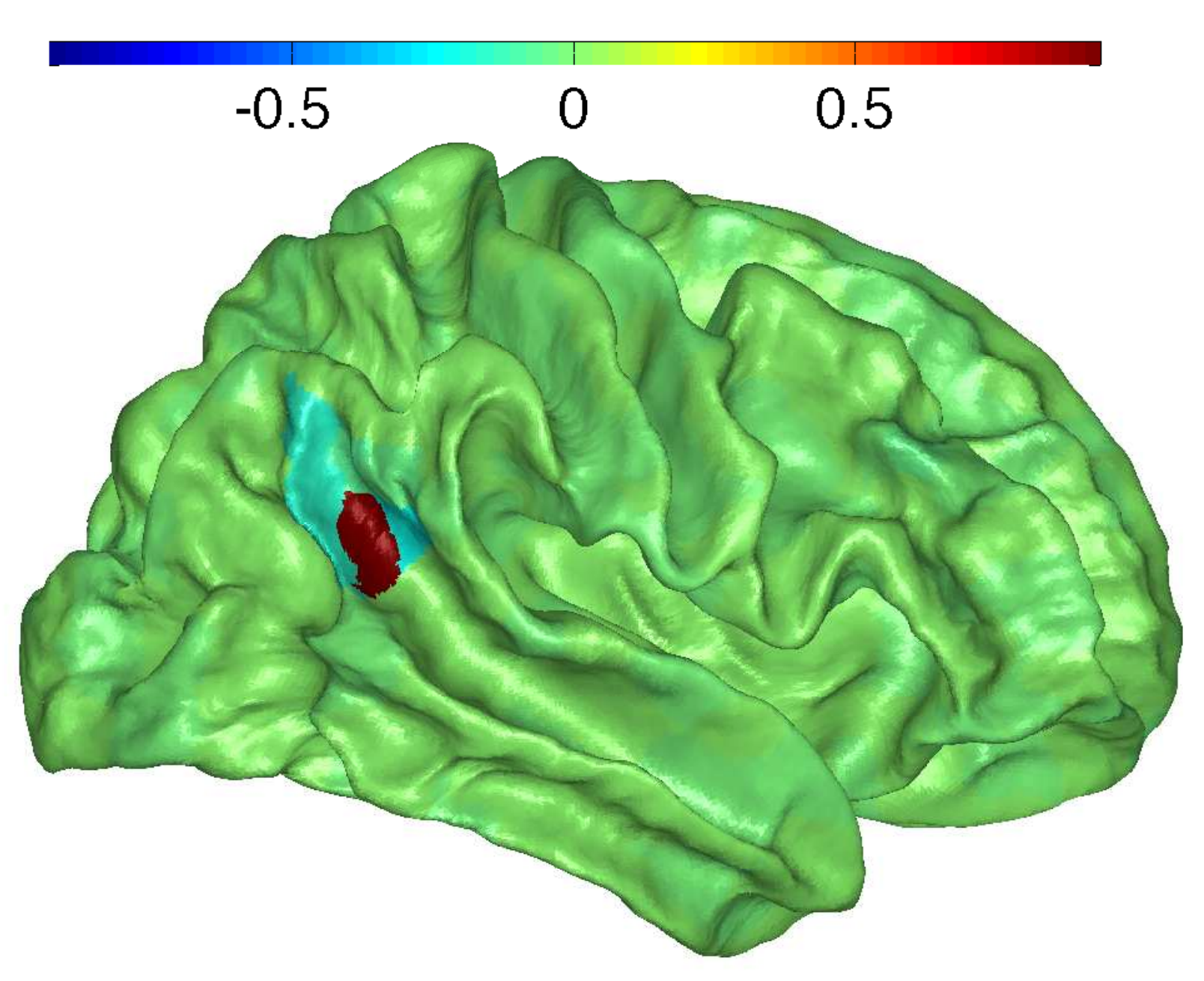} &
\includegraphics[width=.32\columnwidth]{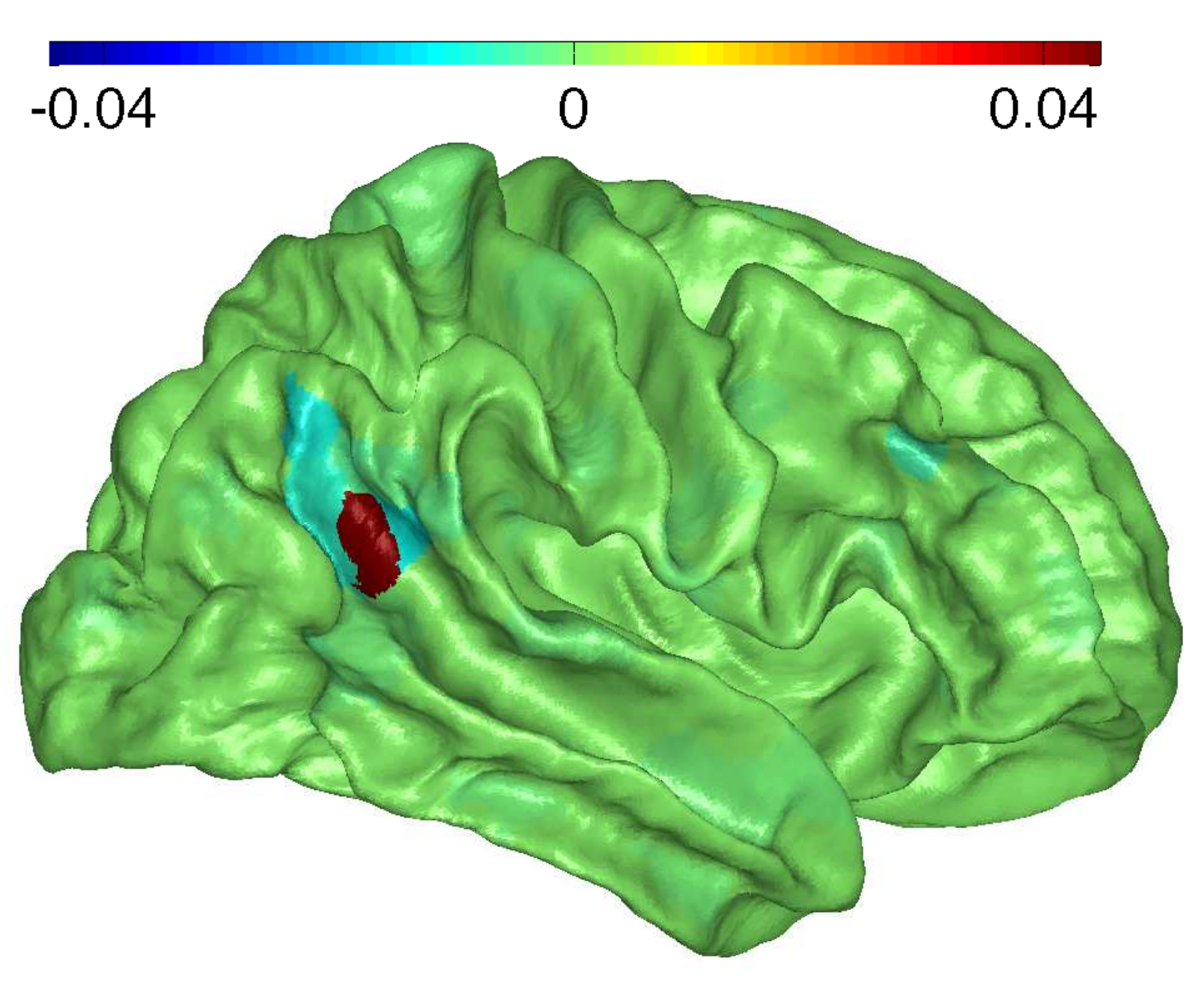} \\
(d) & (e) & (f) \\
\end{tabular}
\caption{Spectral graph wavelets on cerebral cortex, with $K=50$,
  $J=4$ scales. (a) ROI at which wavelets are centered (b) scaling
  function (c)-(f) wavelets, scales 1-4.} \label{fig:cortical}
\end{figure}

Another promising potential application of the spectral graph wavelet
transform is for use in data analysis for brain imaging. Many brain
imaging modalities, notably functional MRI, produce static or time
series maps of activity on the cortical surface. Functional MRI
imaging attempts to measure the difference between ``resting'' and
``active'' cortical states, typically by measuring MRI signal
correlated with changes in cortical blood flow. Due to both
constraints on imaging time and the very indirect nature of the
measurement, functional MRI images typically have a low
signal-to-noise ratio. There is thus a need for techniques for dealing
with high levels of noise in functional MRI images, either through
direct denoising in the image domain or at the level of statistical
hypothesis testing for defining active regions.

Classical wavelet methods have been studied for use in fMRI
processing, both for denoising in the image domain \cite{Zaroubi2000}
and for constructing statistical hypothesis testing
\cite{Ruttimann1998, Ville2004}. The power of these methods relies on
the assumption that the underlying cortical activity signal is
spatially localized, and thus can be efficiently represented with
localized wavelet waveforms. However, such use of wavelets ignores the
anatomical connectivity of the cortex.


A common view of the cerebral cortex is that it is organized into
distinct functional regions which are interconnected by tracts of
axonal fibers. Recent advances in diffusion MRI imaging, notable
diffusion tensor imaging (DTI) and diffusion spectrum imaging (DSI),
have enabled measuring the directionality of fiber tracts in the
brain. By tracing the fiber tracts, it is possible to non-invasively
infer the anatomical connectivity of cortical regions. This raises an
interesting question of whether knowledge of anatomical connectivity
can be exploited for processing of image data on the cortical surface.

We \footnote{In collaboration with Dr Leila Cammoun and
  Prof. Jean-Philippe Thiran, EPFL, Lausanne, Dr Patric Hagmann and
  Prof. Reto Meuli, CHUV, Lausanne} have begun to address this issue
by implementing the spectral graph wavelets on a weighted graph which
captures the connectivity of the cortex. Details of measuring the
cortical connection matrix are described in \cite{Hagmann2008}. Very
briefly, the cortical surface is first subdivided into 998 Regions of
Interest (ROI's). A large number of fiber tracts are traced, then the
connectivity of each pair of ROI's is proportional to the number of
fiber tracts connecting them, with a correction term depending on the
measured fiber length. The resulting symmetric matrix can be viewed as
a weighted graph where the vertices are the ROI's. Figure
\ref{fig:cortical} shows example spectral graph wavelets computed on
the cortical connection graph, visualized by mapping the ROI's back
onto a 3d model of the cortex. Only the right hemisphere is shown,
although the wavelets are defined on both hemispheres. For future work
we plan to investigate the use of these cortical graph wavelets for
use in regularization and denoising of functional MRI data.

\begin{figure}[t]
\begin{tabular}{cc}
\includegraphics[width=.47\columnwidth]{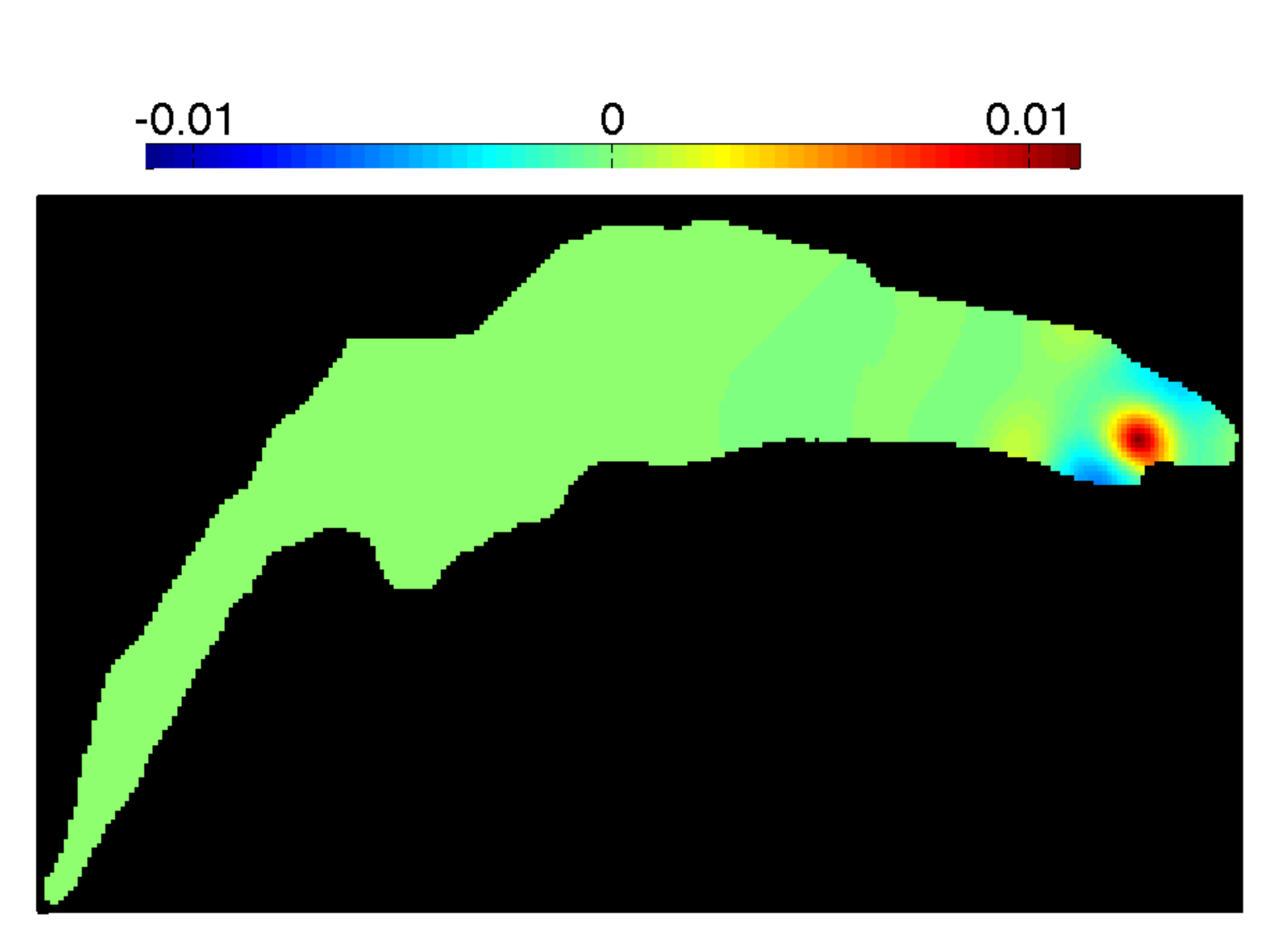} & 
\includegraphics[width=.47\columnwidth]{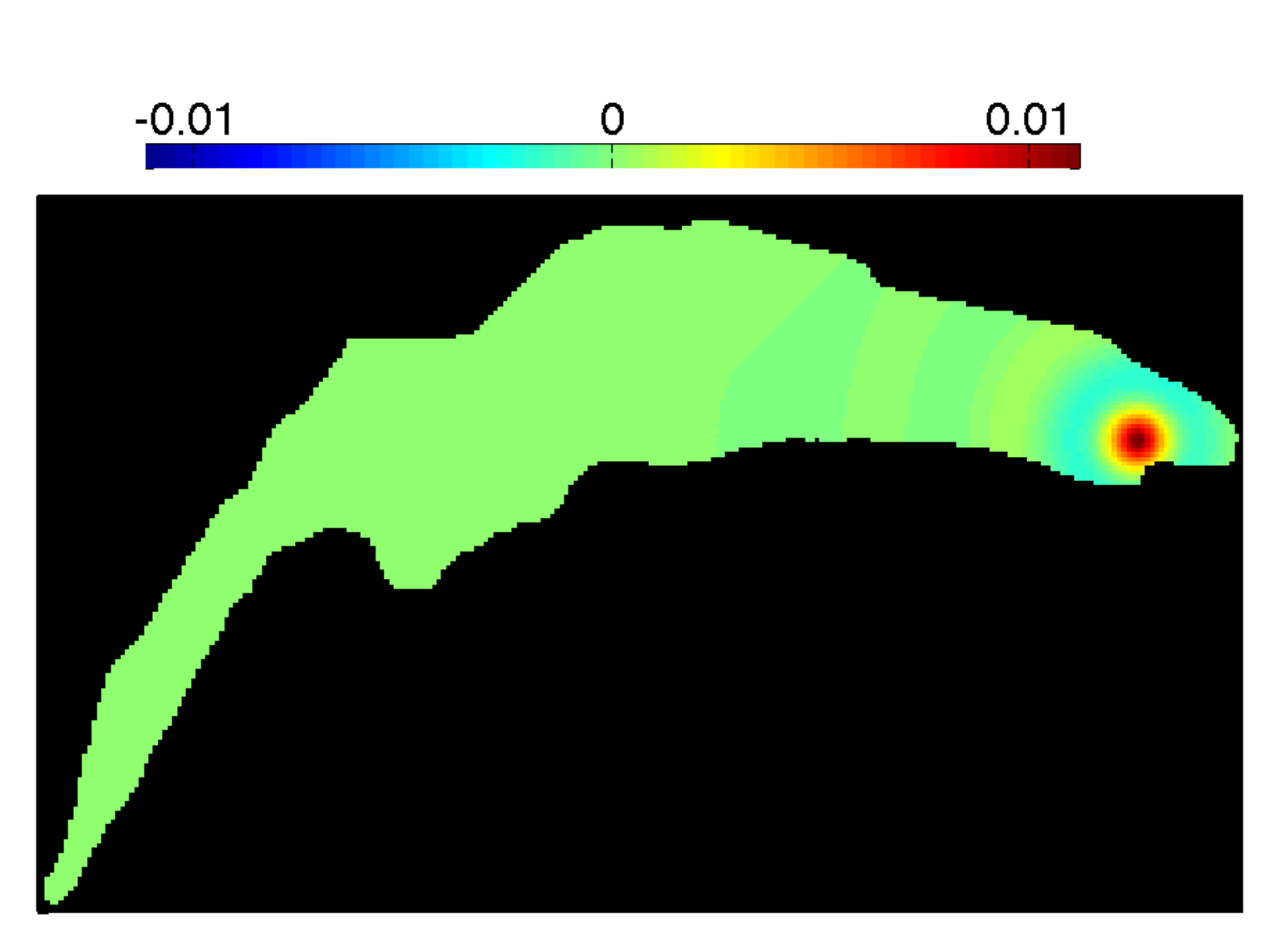} \\
(a) & (b) \\
\includegraphics[width=.47\columnwidth]{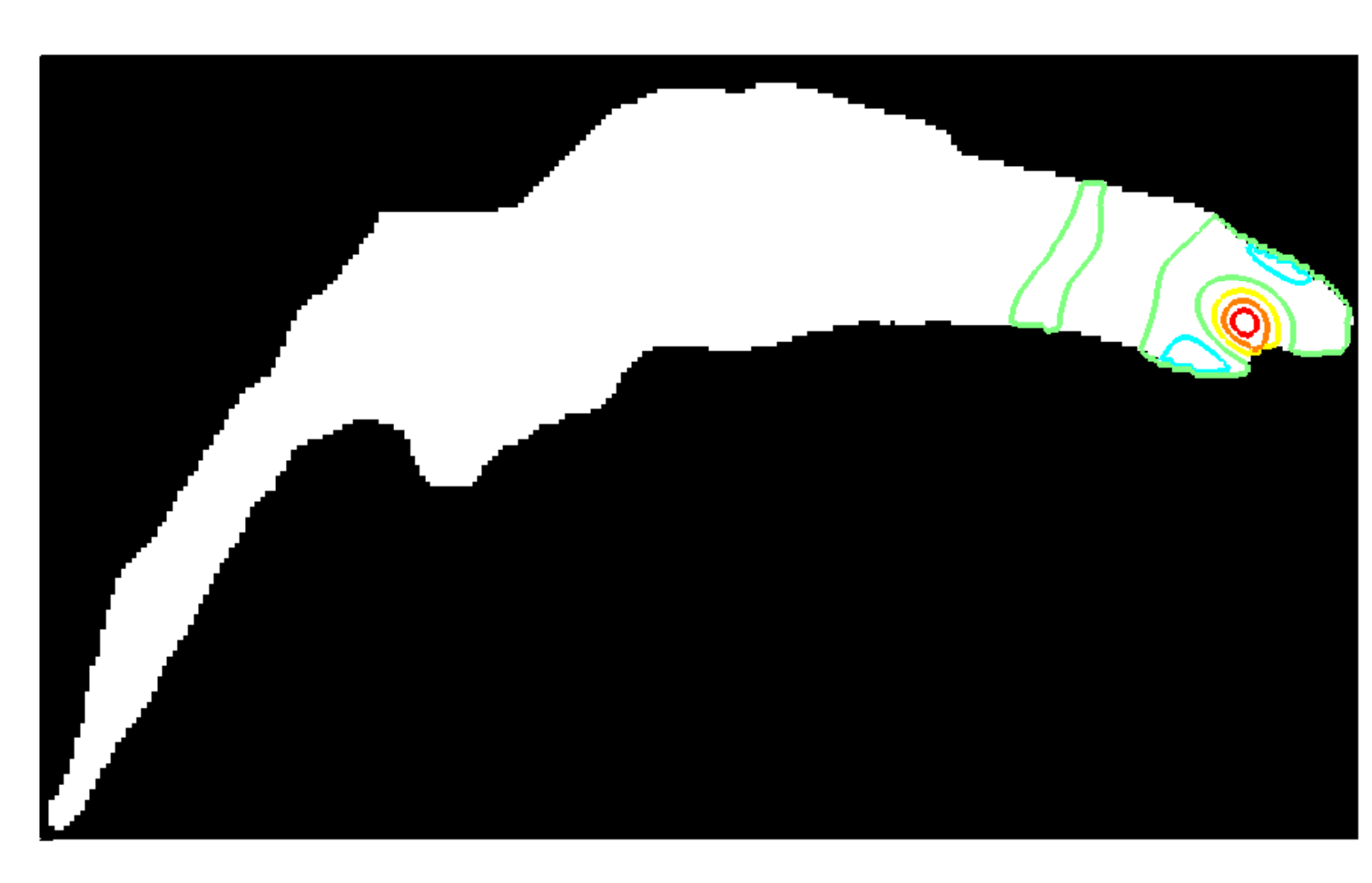} & 
\includegraphics[width=.47\columnwidth]{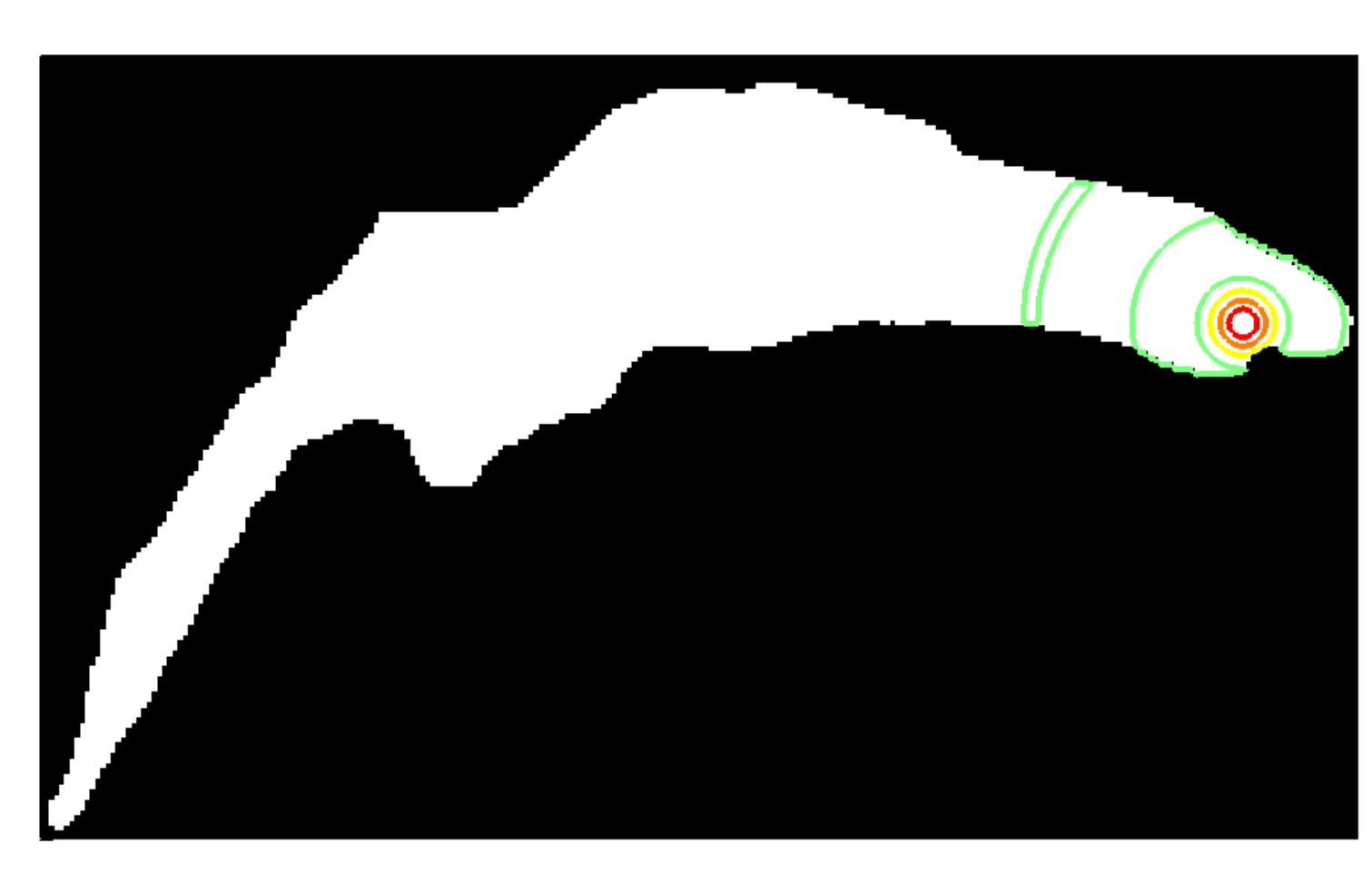} \\
(c) & (d) \\
\end{tabular}
\caption{Spectral graph wavelets on lake Geneva domain, (spatial map
  (a), contour plot (c)); compared with truncated wavelets from graph
  corresponding to complete mesh (spatial map (b), contour plot
  (d)). Note that the graph wavelets adapt to the geometry of the
  domain. } \label{fig:lakelets}
\end{figure}

A final interesting application for the spectral graph wavelet
transform is the construction of wavelets on irregularly shaped
domains.  As a representative example, consider that for some problems
in physical oceanography one may need to manipulate scalar data, such
as water temperature or salinity, that is only defined on the surface
of a given body of water. In order to apply wavelet analysis for such
data, one must adapt the transform to the potentially very complicated
boundary between land and water.  The spectral wavelets handle the
boundary implicitly and gracefully.  As an illustration we examine the
spectral graph wavelets where the domain is determined by the surface
of a lake.

For this example the lake domain is given as a mask defined on a
regular grid. We construct the corresponding weighted graph having
vertices that are grid points inside the lake, and retaining only
edges connecting neighboring grid points inside the lake. We set all
edge weights to unity. The corresponding graph Laplacian is thus
exactly the 5-point stencil (\ref{eq:laplacian_stencil}) for
approximating the continuous operator $-\nabla^2$ on the interior of
the domain; while at boundary points the graph Laplacian is modified
by the deletion of edges leaving the domain. We show an example
wavelet on Lake Geneva in Figure \ref{fig:lakelets}. Shoreline data
was taken from the GSHHS database \cite{Wessel1996} and the lake mask
was created on a 256 x 153 pixel grid using an azimuthal equidistant
projection, with a scale of 232 meters/pixel. The wavelet displayed is
from the coarsest wavelet scale, using the generating kernel described
in \ref{sec:sgwt_design} with parameters $K=100$ and $J=5$ scales.

For this type of domain derived by masking a regular grid, one may
compare the wavelets with those obtained by simply truncating the
wavelets derived from a large regular grid. As the wavelets have
compact support, the true and truncated wavelets will coincide for
wavelets located far from the irregular boundary. As can be seen in
Figure \ref{fig:lakelets}, however, they are quite different for
wavelets located near the irregular boundary. This comparison gives
direct evidence for the ability of the spectral graph wavelets to
adapt gracefully and automatically to the arbitrarily shaped domain.

We remark that the regular sampling of data within the domain may be
unrealistic for problems where data are collected at irregularly
placed sensor locations. The spectral graph wavelet transform could
also be used in this case by constructing a graph with vertices at the
sensor locations, however we have not considered such an example here.

\section{Conclusions and Future Work}
\label{sec:conclusion}

We have presented a framework for constructing wavelets on arbitrary
weighted graphs. By analogy with classical wavelet operators in the
Fourier domain, we have shown that scaling may be implemented in the
spectral domain of the graph Laplacian. We have shown that the
resulting spectral graph wavelets are localized in the small scale
limit, and form a frame with easily calculable frame bounds. We have
detailed an algorithm for computing the wavelets based on Chebyshev
polynomial approximation that avoids the need for explicit
diagonalization of the graph Laplacian, and allows the application of
the transform to large graphs. Finally we have shown examples of the
wavelets on graphs arising from several different potential
application domains.

There are many possible directions for future research for improving
or extending the SGWT. One property of the transform presented here is
that, unlike classical orthogonal wavelet transforms, we do not
subsample the transform at coarser spatial scales. As a result the
SGWT is overcomplete by a factor of J+1 where J is the number of
wavelet scales. Subsampling of the SGWT can be determined by selecting
a mask of vertices at each scale corresponding to the centers of the
wavelets to preserve. This is a more difficult problem on an arbitrary
weighted graph than on a regular mesh, where one may exploit the
regular geometry of the mesh to perform dyadic subsampling at each
scale. An interesting question for future research would be to
investigate an appropriate criterion for determining a good selection
of wavelets to preserve after subsampling. As an example, one may
consider preserving the frame bounds as much as possible under the
constraint that the overall overcompleteness should not exceed a
specified factor.

A related question is to consider how the SGWT would interact with
graph contraction. A weighted graph may be contracted by partitioning
its vertices into disjoint sets; the resulting contracted graph has
vertices equal to the number of partitions and edge weights determined
by summing the weights of the edges connecting any two
partitions. Repeatedly contracting a given weighted graph could define
a multiscale representation of the weighted graph. Calculating a
single scale of the spectral graph wavelet transform for each of these
contracted graphs would then yield a multiscale wavelet analysis.
This proposed scheme is inspired conceptually by the fast wavelet
transform for classical orthogonal wavelets, based on recursive
filtering and subsampling. The question of how to automatically define
the contraction at each scale on an arbitrary irregular graph is
itself a difficult research problem.

The spectral graph wavelets presented here are not directional.  In
particular when constructed on regular meshes they yield radially
symmetric waveforms. This can be understood as in this case the graph
Laplacian is the discretization of the isotropic continuous Laplacian.
In the field of image processing, however, it has long been recognized
that directionally selective filters are more efficient at
representing image structure.  This raises the interesting question of
how, and when, graph wavelets can be constructed which have some
directionality. Intuitively, this will require some notion of local
directionality, i.e. some way of defining directions of all of the
neighbors of a given vertex. As this would require the definition of
additional structure beyond the raw connectivity information, it may
not be appropriate for completely arbitrary graphs.  For graphs which
arise from sampling a known orientable manifold, such as the meshes
with irregular boundary used in Figure \ref{fig:lakelets}, one may
infer such local directionality from the original manifold.

For some problems it may be useful to construct graphs that mix both
local and non-local connectivity information. As a concrete example
consider the cortical graph wavelets shown in Figure
\ref{fig:cortical}. As the vertices of the graph correspond to sets of
MRI voxels grouped into ROI's, the wavelets are defined on the ROI's
and thus cannot be used to analyze data defined on the scale of
individual voxels.  Analyzing voxel scale data with the SGWT would
require constructing a graph with vertices corresponding to individual
voxels. However, the nonlocal connectivity is defined only on the
scale of the ROI's.  One way of defining the connectivity for the
finer graph would be as a sum $A^{nonlocal} + A^{local}$, where
$A^{nonlocal}_{m,n}$ is the weight of the connection between the ROI
containing vertex $m$ and the ROI containing vertex $n$, and
$A^{local}_{m,n}$ indexes whether $m$ and $n$ are spatial
neighbors. Under this scheme we consider $A^{local}$ as implementing a
``default'' local connectivity not arising from any particular
measurement.  Considering this raises interesting questions of how to
balance the relative contributions of the local and nonlocal
connectivities, as well as how the special structure of the hybrid
connectivity matrix could be exploited for efficient computation.  

The particular form of the wavelet generating kernel $g$ used in the
examples illustrating this work was chosen in a somewhat ad-hoc
manner. Aside from localization in the small-scale limit which
required polynomial behaviour of $g$ at the origin, we have avoided
detailed analysis of how the choice of $g$ affects the wavelets. In
particular, we have not chosen $g$ and the choice of spatial scales to
optimize the resulting frame bounds. More detailed investigation is
called for regarding optimizing the design of $g$ for different
applications.

The fast Chebyshev polynomial approximation scheme we describe here
could itself be useful independent of its application for computing
the wavelet transform. One application could be for filtering of data
on irregularly shaped domains, such as described in Figure
\ref{fig:lakelets}. For example, smoothing data on such a domain by
convolving with a Gaussian kernel is confounded by the problem that
near the edges the kernel would extend off of the domain.  As an
alternative, one could express the convolution as multiplication by a
function in the Fourier domain, approximate this function with a
Chebyshev polynomial, and then apply the algorithm described in this
paper. 
This could also be used for band-pass or high-pass filtering of data
on irregular domains, by designing appropriate filters in the spectral
domain induced by the graph Laplacian.

The Chebyshev approximation scheme may also be useful for machine
learning problems on graphs. Some recent work has studied using the
``diffusion kernel'' $K_t=e^{-t\L}$ for use with kernel-based machine
learning algorithms \cite{Kondor2002}. The Chebyshev polynomial scheme
provides a fast way to approximate this exponential that may be useful
for large problems on unstructured yet sparse graphs.


\bibliography{hammond-vandergheynst-gribonval-acha-2009-arxiv}
\bibliographystyle{elsarticle-num}

\end{document}